\newtheorem{theorem}{Theorem}[section] 
\newtheorem{lemma}[theorem]{Lemma}
\newtheorem{conjecture}[theorem]{Conjecture}
\newtheorem{corollary}[theorem]{Corollary}
\theoremstyle{definition}
\newtheorem{remark}{Remark}
\def\th@plain{%
  \upshape 
}
\newcommand{\etal}{et~al.\ }
\newcommand{\ie}{i.e.,\ }
\def\int(#1){\mathrm{int}(#1)}
\def\ext(#1){\mathrm{ext}(#1)}
\def\Int(#1){\mathrm{Int}(#1)}
\def\Ext(#1){\mathrm{Ext}(#1)}
\renewenvironment{proof}[1][\proofname]{\par
  \pushQED{\qed}%
  \normalfont \topsep6\p@\@plus6\p@\relax
  \trivlist
  \item[\hskip\labelsep
        \bfseries
    #1\@addpunct{.}]\ignorespaces
}{%
  \popQED\endtrivlist\@endpefalse
}
\crefname{claim}{Claim}{Claims}
\begin{document}

\title{Variable degeneracy of planar graphs without chorded 6-cycles}
\author{Huihui Fang\footnote{School of Mathematics and Statistics, Henan University, Kaifeng, 475004, China.} \and Danjun Huang\footnote{School of Mathematical Sciences, Zhejiang Normal University, Jinhua 321004, China.} \and Tao Wang\footnote{Center for Applied Mathematics, Henan University, Kaifeng, 475004, China. {\tt Corresponding author: wangtao@henu.edu.cn; https://orcid.org/0000-0001-9732-1617}} \and Weifan Wang\footnote{School of Mathematical Sciences, Zhejiang Normal University, Jinhua 321004, China.}}
\date{}
\maketitle
\begin{abstract}
A cover of a graph $G$ is a graph $H$ with vertex set $V(H) = \bigcup_{v \in V(G)} L_{v}$, where $L_{v} = \{v\} \times [s]$, and the edge set $M = \bigcup_{uv \in E(G)} M_{uv}$, where $M_{uv}$ is a matching between $L_{u}$ and $L_{v}$. A vertex set $T \subseteq V(H)$ is a transversal of $H$ if $|T \cap L_{v}| = 1$ for each $v \in V(G)$. Let $f$ be a nonnegative integer valued function on the vertex-set of $H$. If for any nonempty subgraph $\Gamma$ of $H[T]$, there exists a vertex $x \in V(H)$ such that $d(x) < f(x)$, then $T$ is called a strictly $f$-degenerate transversal. In this paper, we give a sufficient condition for the existence of strictly $f$-degenerate transversal for planar graphs without chorded $6$-cycles. As a consequence, every planar graph without subgraphs isomorphic to the configurations in \cref{fig:FORBID} is DP-$4$-colorable. 

Keywords: Variable degeneracy; DP-coloring; List coloring; Planar graph

MSC2020: 05C15
\end{abstract}

\section{Introduction}

All graphs in this paper are finite, simple and undirected. A graph $G$ is planar if it has a drawing in the Euclidean plane without crossings. Such a drawing is a planar embedding of $G$. A plane graph is a particular planar embedding of a planar graph. A {\em $k$-cycle} is a cycle of length $k$, and a $3$-cycle is usually called as a triangle. Two cycles are {\em adjacent} (or {\em intersecting}) if they share at least one edge (or vertex). Two adjacent cycles are called to be {\em normally adjacent} if the intersection is a single edge and exactly two vertices.


List coloring is a well-known generalization of proper $k$-coloring, introduced by Vizing \cite{MR0498216} and independently by Erd\H{o}s, Rubin, and Taylor \cite{MR593902}. Let $L$ assign a list $L(v)$ of possible colors to each vertex $v$ of $G$ with $|L(v)| \geq k$. Then we say that $L$ is a $k$-list assignment for the graph $G$. If $G$ has a proper coloring $\phi$ such that $\phi(v) \in L(v)$ for each vertex $v$, then we say that $G$ is {\em $L$-colorable}. If $G$ is $L$-colorable for any $k$-list assignment $L$, then $G$ is {\em $k$-choosable}. The {\em list chromatic number} of $G$, denoted by $\chi_{\ell}(G)$, is the smallest positive integer $k$ such that $G$ is $k$-choosable. By the definition, it holds trivially that $\chi_{\ell}(G) \geq \chi(G)$ for any graph $G$, where $\chi(G)$ is the chromatic number of $G$. 

Thomassen \cite{MR1290638} showed that every planar graph is 5-choosable, and Voigt \cite{MR1235909} found an example that is not a $4$-choosable planar graph. It is interesting in graph coloring to find sufficient conditions for planar graphs to be $4$-choosable. Note that every triangle-free planar graph is $4$-choosable since it is $3$-degenerate. Moreover, Wang and Lih \cite{MR1935837} improved this result by proving that planar graphs without intersecting $3$-cycles are $4$-choosable. A planar graph is $4$-choosable if it does not have $k$-cycles for $k = 4$ \cite{MR1687318}, for $k = 5$ \cite{MR1889505}, for $k = 6$ \cite{MR1842116}, for $k = 7$ \cite{MR2538645}. 

In 2018, Dvo\v{r}\'{a}k and Postle \cite{MR3758240} introduced the concept of DP-coloring (under the name {\em correspondence coloring}), and showed that DP-coloring is a generalization of list coloring. 

Let $G$ be a graph and $L_{v} = \{v\} \times [s]$ for $v \in V(G)$, where $[s] = \{1, 2, \dots, s\}$. For each edge $uv$ in $G$, let $M_{uv}$ be a matching between the sets $L_{u}$ and $L_{v}$. Let $M = \cup_{uv \in E(G)}M_{uv}$, which is called a matching assignment. Then a graph $H$ is said to be the $M$-cover of $G$ if it satisfies all the following conditions:

(i) the vertex set of $H$ is $\bigcup_{u \in V(G)}L_{u}$;

(ii) the edge set of $H$ is $M$.

A {\em transversal} of an $M$-cover $H$ is a vertex subset $T$ of $V(H)$ with $|T \cap L_{v}| = 1$ for each $v \in V(G)$. Let $f$ be a function from $V(H)$ to $\{0, 1, 2, \dots\}$. A transversal $T$ is {\em strictly $f$-degenerate} if every nonempty subgraph of $H[T]$ has a vertex $x$ of degree less than $f(x)$ in this subgraph. When $f$ is restricted to be a function from $V(H)$ to $\{0, 1\}$, a strictly $f$-degenerate transversal of $H$ is called a DP-coloring of $H$. If there is no confusion, then a DP-coloring of $H$ is also called a DP-coloring of $G$. 

The {\em DP-chromatic number}, denoted by $\chi_{DP}(G)$, is the minimum integer $k$ such that $H$ admits a DP-coloring whenever $H$ is a cover of $G$ and $f$ is a function from $V(H)$ to $\{0, 1\}$ with $\sum_{x \in L_{v}}f(x) \geq k$ for each $v \in V(G)$. We say that a graph $G$ is {\em DP-$k$-colorable} if $\chi_{DP}(G) \leq k$.

As in proper coloring, we call the elements of $[s]$ as colors, and call the element $i$ as the color of $v$ if $(v, i)$ is chosen in a transversal of $H$. And a vertex $v$ colored with $i$ is also stated that $i$ is the color of $v$. Note that DP-coloring is a generalization of list coloring. This implies that $\chi_{\ell}(G) \leq \chi_{DP}(G)$. It is obvious that DP-coloring and list coloring can be quite different. For example, $\chi_{\ell}(C) < \chi_{DP}(C)$ for every even cycle $C$.

Dvo\v{r}\'{a}k and Postle \cite{MR3758240} followed Thomassen's proof \cite{MR1290638} to show that every planar graph $G$ has $\chi_{DP}(G) \leq 5$. Kim and Ozeki \cite{MR3802151} proved that for each $k \in \{3, 4, 5, 6\}$, every planar graph without $k$-cycles is DP-$4$-colorable. Recently, it is proved that every planar graph is DP-$4$-colorable if it does not contain $i$-cycles adjacent to $j$-cycles for distinct $i$ and $j$ from $\{3, 4, 5, 6\}$, see \cite{MR3969022,MR4078909,MR3996735,MR4654340}. More sufficient conditions for a planar graph to be DP-4-colorable see \cite{MR3881665,MR4294211,MR4089638,MR4212281}. 

For strictly $f$-degenerate transversals, we are interested in the value
\[
\eta(G) = \min_{v \in V(G)}\left\{\sum_{x \in L_{v}}f(x)\right\}
\]
which guarantees the existence of strictly $f$-degenerate transversals whenever $H$ is a cover of $G$ and $f$ is a function on $V(H)$. 

It is interesting to find sufficient conditions for small value of $\eta(G)$. Sittitrai and Nakprasit \cite{MR4345150} proved that every planar graph without $3$-cycles adjacent to $4$-cycles has a strictly $f$-degenerate transversal whenever $f$ is a mapping to $\{0, 1, 2\}$ and $\eta(G) \geq 4$. Nakprasit and Nakprasit \cite{MR4114324} proved that every planar graph has a strictly $f$-degenerate transversal whenever $f$ is a mapping to $\{0, 1, 2\}$ and $\eta(G) \geq 5$. They also proved that every planar graph without pairwise adjacent $3$-, $4$- and $5$-cycles has a strictly $f$-degenerate transversal whenever $f$ is a mapping to $\{0, 1, 2\}$ and $\eta(G) \geq 4$. The third result they obtained is that every planar graph without cycles of lengths $4, a, b, 9$, where $a$ and $b$ are distinct values from $\{6, 7, 8\}$, has a strictly $f$-degenerate transversal whenever $f$ is a mapping to $\{0, 1, 2\}$ and $\eta(G) \geq 3$. 

\begin{figure}[htbp]%
\centering
\subcaptionbox{\label{fig:subfig:a--}}
{\begin{tikzpicture}[scale = 0.8]
\coordinate (A) at (45:1);
\coordinate (B) at (135:1);
\coordinate (C) at (225:1);
\coordinate (D) at (-45:1);
\coordinate (H) at (90:1.414);
\draw (A)--(H)--(B)--(C)--(D)--cycle;
\draw (A)--(B);
\node[circle, inner sep = 1.5, fill = white, draw] () at (A) {};
\node[circle, inner sep = 1.5, fill = white, draw] () at (B) {};
\node[circle, inner sep = 1.5, fill = white, draw] () at (C) {};
\node[circle, inner sep = 1.5, fill = white, draw] () at (D) {};
\node[circle, inner sep = 1.5, fill = white, draw] () at (H) {};
\end{tikzpicture}}\hspace{1.5cm}
\subcaptionbox{\label{fig:subfig:b--}}
{\begin{tikzpicture}[scale = 0.8]
\coordinate (A) at (30:1);
\coordinate (B) at (150:1);
\coordinate (C) at (225:1);
\coordinate (D) at (-45:1);
\coordinate (H) at (90:1.414);
\coordinate (X) at (60:1.4);
\coordinate (Y) at (120:1.4);
\coordinate (T) at ($(H)!(A)!(X)$);
\coordinate (Z) at ($(A)!2!(T)$); 
\draw (A)--(X)--(Z)--(H)--(Y)--(B)--(C)--(D)--cycle;
\draw (A)--(H)--(B);
\draw (H)--(X);
\node[circle, inner sep = 1.5, fill = white, draw] () at (A) {};
\node[circle, inner sep = 1.5, fill = white, draw] () at (B) {};
\node[circle, inner sep = 1.5, fill = white, draw] () at (C) {};
\node[circle, inner sep = 1.5, fill = white, draw] () at (D) {};
\node[circle, inner sep = 1.5, fill = white, draw] () at (H) {};
\node[circle, inner sep = 1.5, fill = white, draw] () at (X) {};
\node[circle, inner sep = 1.5, fill = white, draw] () at (Y) {};
\node[circle, inner sep = 1.5, fill = white, draw] () at (Z) {};
\end{tikzpicture}}\hspace{1.5cm}
\subcaptionbox{\label{fig:subfig:c--}}
{\begin{tikzpicture}[scale = 0.8]
\coordinate (A) at (30:1);
\coordinate (B) at (150:1);
\coordinate (C) at (225:1);
\coordinate (D) at (-45:1);
\coordinate (H) at (90:1.414);
\coordinate (X) at (60:1.4);
\coordinate (Y) at (120:1.4);
\coordinate (T) at ($(A)!(H)!(X)$);
\coordinate (Z) at ($(H)!2!(T)$); 
\draw (A)--(Z)--(X)--(H)--(Y)--(B)--(C)--(D)--cycle;
\draw (X)--(A)--(H)--(B);
\node[circle, inner sep = 1.5, fill = white, draw] () at (A) {};
\node[circle, inner sep = 1.5, fill = white, draw] () at (B) {};
\node[circle, inner sep = 1.5, fill = white, draw] () at (C) {};
\node[circle, inner sep = 1.5, fill = white, draw] () at (D) {};
\node[circle, inner sep = 1.5, fill = white, draw] () at (H) {};
\node[circle, inner sep = 1.5, fill = white, draw] () at (X) {};
\node[circle, inner sep = 1.5, fill = white, draw] () at (Y) {};
\node[circle, inner sep = 1.5, fill = white, draw] () at (Z) {};
\end{tikzpicture}}
\caption{Forbidden configurations in \cref{MRTHREE}.}
\label{E}
\end{figure}
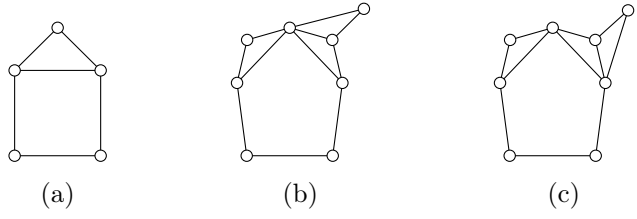

\begin{theorem}[Li and Wang \cite{MR4669976}]\label{MRTHREE}
Let $G$ be a planar graph without subgraphs isomorphic to the configurations in \cref{E}. Let $H$ be a cover of $G$ and $f$ be a function from $V(H)$ to $\{0, 1, 2\}$. If $f(v, 1) + f(v, 2) + \dots + f(v, s) \geq 4$ for each $v \in V(G)$, then $H$ has a strictly $f$-degenerate transversal. 
\end{theorem}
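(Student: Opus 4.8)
The plan is to argue by contradiction via a minimal counterexample together with a discharging argument, following the now-standard template for variable-degeneracy results on sparse planar graphs. Suppose the statement fails, and among all triples $(G, H, f)$ violating it choose one with $|V(G)|$ as small as possible. We may take $G$ connected, and a routine block/cut-vertex argument lets us further assume $G$ is $2$-connected, so that $G$ has a plane embedding in which every face is bounded by a cycle. The argument then splits into a \emph{reducibility} phase, which forbids certain local configurations in this minimal counterexample, and a \emph{discharging} phase, which shows these forbidden configurations cannot all be absent.

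For the reducibility phase, the basic engine is the following extension step, native to strictly $f$-degenerate transversals. If $v$ is a vertex and $H' = H - L_v$, then by minimality the cover $H'$ of $G - v$ (with $f$ restricted) admits a strictly $f$-degenerate transversal $T'$; I then want a color $i$ with $f(v,i) \ge 1$ whose matched neighbors in $T'$ number at most $f(v,i) - 1$, because adding $(v,i)$ to such a $T'$ preserves strict $f$-degeneracy: any nonempty subgraph avoiding $(v,i)$ inherits a low-degree vertex from $T'$, while any subgraph containing $(v,i)$ has $(v,i)$ itself of degree at most $f(v,i)-1 < f(v,i)$. Such an $i$ fails to exist only if every color $i$ with $f(v,i)\ge1$ is matched into $T'$ at least $f(v,i)$ times, which forces at least $\sum_{i} f(v,i) \ge 4$ matching edges out of $L_v$ and hence $d(v) \ge 4$, since each neighbor of $v$ contributes at most one such edge. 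Thus $\delta(G) \ge 4$. Pushing the same idea through whole configurations—deleting a small vertex set, extending the transversal on the rest, and exploiting the slack from colors with $f$-value $2$—rules out the remaining reducible configurations: near-triangular blocks around $4$- and $5$-vertices are extendable unless they embed one of the forbidden configurations of \cref{E}, which is exactly the role of the hypotheses.

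For the discharging phase, I assign the initial charge $\mu(x) = d(x) - 4$ to each vertex $x$ and $\mu(f) = \ell(f) - 4$ to each face $f$; Euler's formula gives $\sum_{x} \mu(x) + \sum_{f} \mu(f) = -8$. Since $\delta(G) \ge 4$, every vertex starts nonnegative, $4$-faces are neutral, and among faces only the $3$-faces are negative (charge $-1$), so the design goal is to funnel a full unit into each $3$-face while keeping every other element nonnegative. The rules will have vertices of degree $\ge 5$ and faces of length $\ge 5$ donate fixed amounts across shared edges to incident triangles; the reducibility lemmas (no $3$-face sharing an edge with a $4$-face in the excluded pattern, controlled clustering of triangles, and $\delta \ge 4$) guarantee that each $3$-face is fed and that no donor is overdrawn. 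Checking $\mu^{*}(x), \mu^{*}(f) \ge 0$ for the final charge $\mu^{*}$ then contradicts the total being $-8$.

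The main obstacle is the discharging verification for the tightest local cases—degree-$4$ and degree-$5$ vertices lying on several $3$-faces, and short faces bordering several triangles—where the margin is thin and one must invoke the precise structure excluded by \cref{E} to kill the bad distributions. A secondary but genuine difficulty is the reducibility of the two larger configurations (b) and (c): unlike the single-vertex case, their extension arguments require deleting a carefully chosen subset, re-coloring, and using the extra freedom that $f$-value $2$ provides on the boundary, while checking that the re-extension never recreates a forbidden chorded cycle. Once both phases are in place, the contradiction completes the proof.
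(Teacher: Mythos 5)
Your proposal has the right skeleton --- minimal counterexample, reducibility, discharging --- which is indeed the method behind this result (the paper itself does not prove it; it is imported from Li and Wang \cite{MR4669976}), and your single-vertex extension argument is correct: if no colour $i$ with $f(v,i)\ge 1$ has at most $f(v,i)-1$ matched neighbours in $T'$, then the matching edges leaving $L_v$ number at least $\sum_i f(v,i)\ge 4$, forcing $d(v)\ge 4$. Beyond that, however, the proposal is a plan rather than a proof: the discharging rules are never written down, the set of multi-vertex reducible configurations is never identified, and their reducibility is never established. In the strictly $f$-degenerate setting, proving a configuration built from several $4$- and $5$-vertices reducible is not ``pushing the same idea through'': one must fix a deletion order and pair vertices so that the colour chosen at one vertex saves a colour at another, under explicit non-adjacency hypotheses --- this is exactly the machinery of \cref{WW} and \cref{EXNN-RC} (Theorems 5.3, 5.5, 5.6 of Wang et al.\ \cite{Wang2019+}), and verifying that the forbidden configurations of \cref{E} guarantee those hypotheses is a substantial part of the work. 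You explicitly defer both this and the entire discharging verification, i.e.\ the two phases that carry all the mathematical content.

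There is also a concrete false step: the claim that ``a routine block/cut-vertex argument lets us further assume $G$ is $2$-connected.'' In this framework that reduction fails as stated. If $v$ is a cut vertex, colouring one side selects some $(v,i)$; to extend across the other side you must either (i) restrict $L_v$ to $\{(v,i)\}$, which destroys the hypothesis $\sum_{x\in L_v} f(x)\ge 4$ at $v$, or (ii) delete $L_v$ and decrement $f$ on the matched neighbours of $(v,i)$, which destroys the hypothesis at every neighbour of $v$; either way the minimality of the counterexample cannot be invoked, and since distinct covers admit no global identification of colours, the two partial transversals cannot simply be made to agree at $v$. This is precisely why papers in this area strengthen the induction statement to carry a precoloured subgraph through the argument --- compare the precoloured triangle $abc$ in \cref{C6}, which is what makes the separating-triangle reduction in \cref{NS} legitimate. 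Your outline would need either such a strengthened statement or a discharging argument that nowhere uses $2$-connectivity; as written it has a genuine gap here, in addition to the missing reducibility and discharging details.
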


\begin{figure}[htbp]%
\centering
\subcaptionbox{\label{fig:subfig:a}}
{\begin{tikzpicture}[scale = 0.8]
\def\s{1.3}
\coordinate (A) at (\s, 0);
\coordinate (B) at (60:\s);
\coordinate (O) at (0, 0);
\coordinate (C) at (-150:\s);
\coordinate (D) at (0,-\s);
\coordinate (E) at (\s, -\s);
\draw (A)--(B)--(O)--(C)--(D)--(E)--cycle;
\draw (A)--(O)--(D);
\node[circle, inner sep = 1.5, fill = white, draw] () at (O) {};
\node[circle, inner sep = 1.5, fill = white, draw] () at (A) {};
\node[circle, inner sep = 1.5, fill = white, draw] () at (B) {};
\node[circle, inner sep = 1.5, fill = white, draw] () at (C) {};
\node[circle, inner sep = 1.5, fill = white, draw] () at (D) {};
\node[circle, inner sep = 1.5, fill = white, draw] () at (E) {};
\end{tikzpicture}}\hspace{1.5cm}
\subcaptionbox{\label{fig:subfig:b}}
{\begin{tikzpicture}[scale = 0.8]
\def\s{1}
\coordinate (A) at (\s, 0);
\coordinate (B) at (0.5*\s, 0.7*\s);
\coordinate (O) at (0, 0);
\coordinate (E) at (\s, -\s);
\coordinate (D) at (0,-\s);
\coordinate (C) at (0.5*\s, -1.7*\s);
\draw (A)--(B)--(O)--(D)--(C)--(E)--cycle;
\draw (A)--(O);
\draw (D)--(E);
\node[circle, inner sep = 1.5, fill = white, draw] () at (O) {};
\node[circle, inner sep = 1.5, fill = white, draw] () at (A) {};
\node[circle, inner sep = 1.5, fill = white, draw] () at (B) {};
\node[circle, inner sep = 1.5, fill = white, draw] () at (C) {};
\node[circle, inner sep = 1.5, fill = white, draw] () at (D) {};
\node[circle, inner sep = 1.5, fill = white, draw] () at (E) {};
\end{tikzpicture}}\hspace{1.5cm}
\subcaptionbox{\label{fig:subfig:433}}
{\begin{tikzpicture}[scale = 0.8]
\def\s{1.2}
\coordinate (O) at (0, 0);
\coordinate (A) at (0, -\s);
\coordinate (B) at (-\s,-\s);
\coordinate (C) at (-\s,0);
\coordinate (D) at (135:\s);
\coordinate (E) at (0,\s);
\draw (O)--(A)--(B)--(C)--(D)--(E)--cycle;
\draw (C)--(O)--(D);
\node[circle, inner sep = 1.5, fill = white, draw] () at (O) {};
\node[circle, inner sep = 1.5, fill = white, draw] () at (A) {};
\node[circle, inner sep = 1.5, fill = white, draw] () at (B) {};
\node[circle, inner sep = 1.5, fill = white, draw] () at (C) {};
\node[circle, inner sep = 1.5, fill = white, draw] () at (D) {};
\node[circle, inner sep = 1.5, fill = white, draw] () at (E) {};
\end{tikzpicture}}\hspace{1.5cm}
\subcaptionbox{\label{fig:subfig:3444}}
{\begin{tikzpicture}[scale = 0.8]
\def\s{1}
\coordinate (O) at (0, 0);
\coordinate (E) at (\s, 0);
\coordinate (W) at (-\s,0);
\coordinate (N) at (0,\s);
\coordinate (S) at (0,-\s);
\coordinate (NW) at (-\s,\s);
\coordinate (SW) at (-\s,-\s);
\coordinate (SE) at (\s,-\s);
\draw (E)--(N)--(NW)--(SW)--(SE)--cycle;
\draw (N)--(S);
\draw (E)--(W);
\node[circle, inner sep = 1.5, fill = white, draw] () at (O) {};
\node[circle, inner sep = 1.5, fill = white, draw] () at (E) {};
\node[circle, inner sep = 1.5, fill = white, draw] () at (W) {};
\node[circle, inner sep = 1.5, fill = white, draw] () at (N) {};
\node[circle, inner sep = 1.5, fill = white, draw] () at (S) {};
\node[circle, inner sep = 1.5, fill = white, draw] () at (SE) {};
\node[circle, inner sep = 1.5, fill = white, draw] () at (SW) {};
\node[circle, inner sep = 1.5, fill = white, draw] () at (NW) {};
\end{tikzpicture}}\\
\subcaptionbox{\label{fig:subfig:533}}
{\begin{tikzpicture}[scale = 0.8]
\def\s{1}
\foreach \ang in {1, 2, 3, 4, 5}
{
\def\pointname{v\ang}
\coordinate (\pointname) at ($(\ang*360/5-54:\s)$);}
\coordinate (S) at ($(v2)!1!60:(v1)$);
\coordinate (S') at ($(v2)!1!60:(S)$);
\draw (v1)--(v2)--(v3)--(v4)--(v5)--cycle;
\draw (v1)--(S)--(v2);
\draw (S)--(S')--(v2);
\node[circle, inner sep = 1.5, fill = white, draw] () at (S) {};
\node[circle, inner sep = 1.5, fill = white, draw] () at (S') {};
\foreach \ang in {1, 2, 3, 4, 5}
{
\node[circle, inner sep = 1.5, fill = white, draw] () at (v\ang) {};
}
\end{tikzpicture}}\hspace{1cm}
\subcaptionbox{\label{fig:subfig:534f}}
{\begin{tikzpicture}[scale = 0.8]
\def\s{1}
\foreach \ang in {1, 2, 3, 4, 5}
{
\def\pointname{v\ang}
\coordinate (\pointname) at ($(\ang*360/5-54:\s)$);
}
\coordinate (S) at ($(v2)!1!60:(v1)$);
\coordinate (S') at ($(v2)!1!90:(S)$);
\coordinate (S'') at ($(S)!1!-90:(v2)$);
\draw (v1)--(v2)--(v3)--(v4)--(v5)--cycle;
\draw (v1)--(S)--(v2);
\draw (S)--(S'')--(S')--(v2);
\node[circle, inner sep = 1.5, fill = white, draw] () at (S) {};
\node[circle, inner sep = 1.5, fill = white, draw] () at (S') {};
\node[circle, inner sep = 1.5, fill = white, draw] () at (S'') {};
\foreach \ang in {1, 2, 3, 4, 5}
{
\node[circle, inner sep = 1.5, fill = white, draw] () at (v\ang) {};
}
\end{tikzpicture}}\hspace{1cm}
\subcaptionbox{\label{fig:subfig:g}}
{\begin{tikzpicture}[scale = 0.8]
\def\s{1.2}
\coordinate (O) at (0, 0);
\coordinate (v1) at (0:\s);
\coordinate (v2) at (60:\s);
\coordinate (v3) at (120:\s);
\coordinate (v4) at (180:\s);
\coordinate (OO) at ($(v3)!1!60:(v2)$);
\coordinate (A) at ($(OO)!1!-60:(v3)$);
\coordinate (B) at ($(OO)!1!-60:(A)$);
\coordinate (C) at ($(OO)!1!-60:(B)$);
\coordinate (D) at ($(OO)!1!-60:(C)$);
\draw (v1)--(v2)--(v3)--(v4)--(O)--cycle;
\draw (O)--(v2);
\draw (O)--(v3);
\draw (v3)--(A)--(B)--(C)--(D)--(v2);
\node[circle, inner sep = 1.5, fill = white, draw] () at (O) {};
\node[circle, inner sep = 1.5, fill = white, draw] () at (v3) {};
\node[circle, inner sep = 1.5, fill = white, draw] () at (v1) {};
\node[circle, inner sep = 1.5, fill = white, draw] () at (v2) {};
\node[circle, inner sep = 1.5, fill = white, draw] () at (v4) {};
\node[circle, inner sep = 1.5, fill = white, draw] () at (A) {};
\node[circle, inner sep = 1.5, fill = white, draw] () at (B) {};
\node[circle, inner sep = 1.5, fill = white, draw] () at (C) {};
\node[circle, inner sep = 1.5, fill = white, draw] () at (D) {};
\end{tikzpicture}}\hspace{1cm}
\subcaptionbox{\label{fig:subfig:h}}
{\begin{tikzpicture}[scale = 0.8]
\def\s{1.2}
\coordinate (O) at (0, 0);
\coordinate (v1) at (0:\s);
\coordinate (v2) at (60:\s);
\coordinate (v3) at (120:\s);
\coordinate (v4) at (180:\s);
\coordinate (OO) at ($(v2)!1!60:(v1)$);
\coordinate (A) at ($(OO)!1!-60:(v2)$);
\coordinate (B) at ($(OO)!1!-60:(A)$);
\coordinate (C) at ($(OO)!1!-60:(B)$);
\coordinate (D) at ($(OO)!1!-60:(C)$);
\draw (v1)--(v2)--(v3)--(v4)--(O)--cycle;
\draw (O)--(v2);
\draw (O)--(v3);
\draw (v2)--(A)--(B)--(C)--(D)--(v1);
\node[circle, inner sep = 1.5, fill = white, draw] () at (O) {};
\node[circle, inner sep = 1.5, fill = white, draw] () at (v3) {};
\node[circle, inner sep = 1.5, fill = white, draw] () at (v1) {};
\node[circle, inner sep = 1.5, fill = white, draw] () at (v2) {};
\node[circle, inner sep = 1.5, fill = white, draw] () at (v4) {};
\node[circle, inner sep = 1.5, fill = white, draw] () at (A) {};
\node[circle, inner sep = 1.5, fill = white, draw] () at (B) {};
\node[circle, inner sep = 1.5, fill = white, draw] () at (C) {};
\node[circle, inner sep = 1.5, fill = white, draw] () at (D) {};
\end{tikzpicture}}
\caption{Forbidden configurations in \cref{PAIRWISE3456}.}
\label{FIGPAIRWISE3456}
\end{figure}
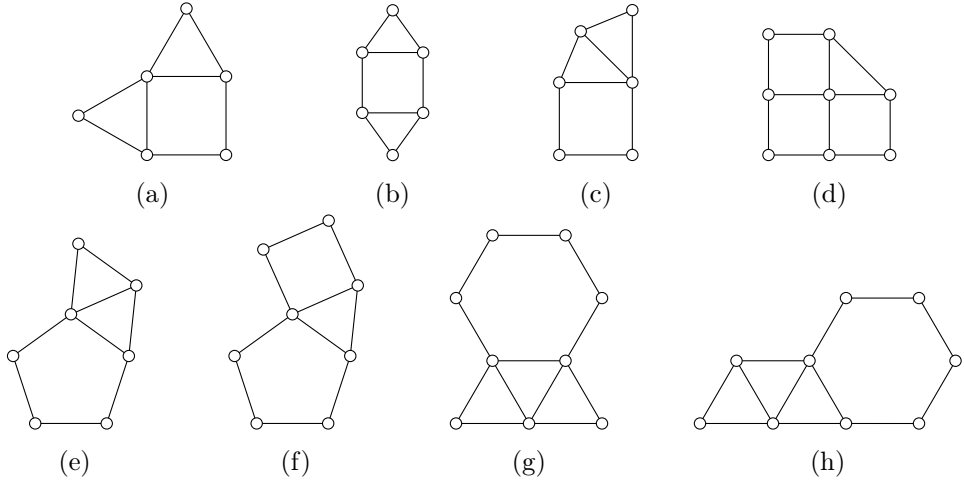

A $4^{-}$-cycle is \textbf{good} if there is no vertex having four neighbors on the cycle, otherwise it is \textbf{a bad cycle}. It is observed that every $3$-cycle is good. Wang \etal \cite{Wang2019+} gave the following result on planar graphs with some restrictions on $6^{-}$-cycles. 

\begin{theorem}[Wang \etal \cite{Wang2019+}]\label{PAIRWISE3456}
Let $G$ be a planar graph without any configuration in \cref{FIGPAIRWISE3456}, and let $x_{1}x_{2}\dots x_{l}x_{1}$ be a good $4^{-}$-cycle in $G$. Let $H$ be a cover of $G$ and $f$ be a function from $V(H)$ to $\{0, 1, 2\}$. If $f(v, 1) + f(v, 2) + \dots + f(v, s) \geq 4$ for each $v \in V(G)$, then each strictly $f$-degenerate transversal $R_{0}$ of $H_{0} = H[\bigcup_{i \in [l]} L_{x_{i}}]$ can be extended to a strictly $f$-degenerate transversal of $H$. 
\end{theorem}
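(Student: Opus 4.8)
The plan is to argue by contradiction through a minimal counterexample combined with the discharging method, which is the standard engine for precoloring-extension results on sparse plane graphs.

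\emph{Minimal counterexample and reduction to a disc.} Suppose the statement fails, and choose a counterexample --- a plane graph $G$ with cover $H$, a weight $f\colon V(H)\to\{0,1,2\}$ satisfying $\sum_{x\in L_v}f(x)\ge 4$ for every $v$, a good $4^{-}$-cycle $C=x_1x_2\cdots x_lx_1$, and a strictly $f$-degenerate transversal $R_0$ of $H_0$ that admits no extension --- for which $|V(G)|$ is minimum. Routine reductions show that $G$ is connected and $2$-connected. The first substantive step is to reduce to the case where $C$ bounds a face and every other vertex lies in its interior: if $C$ were separating, one would extend $R_0$ over each (strictly smaller) side by minimality and then glue, and the only way the glued transversal can fail to be strictly $f$-degenerate is when short paths on the two sides close up through $C$; that configuration forces a chorded $6$-cycle and is excluded by \cref{FIGPAIRWISE3456}. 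Thus the problem becomes an extension into the interior disc bounded by $C$.

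\emph{A recoloring lemma.} All of the reductions rest on one elementary observation. Suppose $T$ is a strictly $f$-degenerate transversal on a vertex subset $S\subsetneq V(G)$ and $w\notin S$ has at most three neighbors in $S$. Writing $n_i$ for the number of neighbors of $(w,i)$ lying in $T$, each neighbor of $w$ in $S$ contributes at most one such edge, so $\sum_i n_i\le 3<4\le\sum_i f(w,i)$; hence some copy $(w,i)$ satisfies $n_i<f(w,i)$. Placing $w$ on this copy keeps the transversal strictly $f$-degenerate, because any nonempty subgraph of the enlarged $H[T]$ either misses $(w,i)$ --- and is handled by $T$ --- or contains $(w,i)$, which then has degree $n_i<f(w,i)$ in it. Consequently any vertex with at most three already-colored neighbors can be colored greedily, and more generally a set $U$ can be restored whenever its vertices admit an ordering in which each has at most three earlier-or-outside neighbors.

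\emph{Reducible configurations.} Using this lemma I would compile a list of configurations that cannot occur in the minimal counterexample, each eliminated by the same template: assuming the configuration is present, delete a small interior set $U$ to obtain a smaller plane graph $G'$ that still avoids every configuration of \cref{FIGPAIRWISE3456} and still carries $C$ as a good $4^{-}$-cycle; extend $R_0$ over $G'$ by minimality; and restore $U$ in an order for which the recoloring lemma applies. The configurations I expect to need are the usual family for $\eta=4$: no interior vertex of degree at most $3$, no light edge joining two low-degree vertices, and tight restrictions on how $3$-, $4$-, and $5$-faces may share vertices or edges --- precisely the clusters that \cref{FIGPAIRWISE3456} keeps from appearing. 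The delicate point is to verify that the deletion preserves the forbidden-subgraph hypothesis and that the restoration stays valid for genuinely variable $f$, so the values $f\in\{0,1,2\}$ (and not merely $\{0,1\}$, as for DP-coloring) must be tracked through every case.

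\emph{Discharging and the main obstacle.} Finally I would assign initial charges $\mu(v)=d(v)-4$ to each vertex and $\mu(\alpha)=d(\alpha)-4$ to each face $\alpha$, so that Euler's formula gives total charge $-8$, the outer face (of length $l\le 4$) contributing $l-4\le 0$. Designing local rules that move charge from large faces and high-degree vertices toward $3$-faces, $4$-faces, and low-degree vertices, I would use the reducible configurations to show that after discharging every interior vertex and every bounded face ends with nonnegative charge, while the total charge on $C$ and the outer face exceeds $-8$; this contradicts charge conservation and completes the proof. The main obstacle is exactly this discharging analysis: exhibiting a set of reducible configurations rich enough that the no-chorded-$6$-cycle hypothesis forces local nonnegativity in every case, while simultaneously confirming that each configuration really is reducible for strictly $f$-degenerate transversals rather than only for ordinary colorings.
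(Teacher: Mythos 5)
This paper never proves \cref{PAIRWISE3456}: it is imported as a black box from Wang \etal \cite{Wang2019+}, so your attempt can only be compared with the technique of that line of work and with this paper's closely analogous proof of \cref{C6}. At the level of framework your outline is the right one (minimal counterexample, greedy extension, reducible configurations, discharging), and your greedy lemma is correct: if $w$ has at most three colored neighbors then $\sum_i n_i \le 3 < 4 \le \sum_i f(w,i)$ forces some copy $(w,i)$ with $n_i < f(w,i)$, and adding it preserves strict $f$-degeneracy. But the one substantive argument you actually carry out --- the separating-cycle reduction --- is wrong as stated. You cannot extend $R_0$ independently into the two sides of $C$ and ``glue''. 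Strict $f$-degeneracy is certified by an elimination ordering, and each side's ordering bounds the degree of a cycle vertex only against its own side; in the union, a subgraph meeting both sides can have every vertex of degree at least $f$ (each side's only low-degree certificates may be cycle vertices, which acquire extra neighbors from the other side). This failure has nothing to do with chorded $6$-cycles --- it can already occur with a chordless separating cycle and a path of length two through one cycle vertex --- so the hypothesis of \cref{FIGPAIRWISE3456} cannot rescue your gluing. The correct repair is the one used in \cref{NS} of this paper (and in \cite{Wang2019+}): extend to one side first, then \emph{freeze} the boundary by resetting $f^{*}$ to $1$ on the chosen copies of the cycle vertices and deleting all edges of $H$ inside $\bigcup_{i} L_{x_i}$, and only then apply minimality to the other side. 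With $f^{*}=1$ a cycle vertex can be eliminated only when isolated, which is exactly what allows the two elimination orderings to be concatenated.

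The second gap is that everything carrying the actual weight of the theorem is deferred: you list neither the reducible configurations nor the discharging rules, and you say yourself that this is ``the main obstacle''. Moreover, your plan to establish reducibility solely from the greedy lemma (delete a set $U$, restore it in an order where each vertex has at most three earlier-or-outside neighbors) cannot suffice for the configurations this theorem needs. The clusters of $3$- and $4$-faces that survive the hypothesis of \cref{FIGPAIRWISE3456} are built from $4$-vertices all of whose neighbors lie in the configuration, so no valid restoration order exists; one needs the pairing (``save a color'') arguments of \cref{WW} and \cref{EXNN-RC}, where a particular color chosen at $v_{1}$ guarantees a spare color at $v_{m}$, and verifying those for $f$ with values in $\{0,1,2\}$ is precisely the hard part of \cite{Wang2019+}. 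Finally, your outline never uses the hypothesis that the precolored $4^{-}$-cycle is \emph{good}, yet it is essential: a vertex with four neighbors on the precolored cycle defeats your greedy lemma (all of $\sum_i f$ can be blocked), and goodness is assumed exactly to rule this out. As it stands, the proposal is a plausible strategy with a broken key step, not a proof.
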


\begin{figure}[htbp]
\centering
\subcaptionbox{}
{\begin{tikzpicture}[scale = 0.8]
\def\s{1}
\foreach \ang in {1, ..., 6}
{
\def\pointname{v\ang}
\coordinate (\pointname) at ($(\ang*60:\s)$);}
\draw (v1)--(v2)--(v3)--(v4)--(v5)--(v6)--cycle;
\draw (v2)--(v4);
\foreach \ang in {1, ..., 6}
{
\node[circle, inner sep = 1.5, fill = white, draw] () at (v\ang) {};
}
\end{tikzpicture}}\hspace{1.5cm}
\subcaptionbox{}
{\begin{tikzpicture}[scale = 0.8]
\def\s{1}
\foreach \ang in {1, ..., 6}
{
\def\pointname{v\ang}
\coordinate (\pointname) at ($(\ang*60:\s)$);}
\draw (v1)--(v2)--(v3)--(v4)--(v5)--(v6)--cycle;
\draw (v3)--(v6);
\foreach \ang in {1, ..., 6}
{
\node[circle, inner sep = 1.5, fill = white, draw] () at (v\ang) {};
}
\end{tikzpicture}}\hspace{1.5cm}
\subcaptionbox{}
{\begin{tikzpicture}[scale = 0.8]
\def\s{1}
\coordinate (E) at (\s, 0);
\coordinate (N) at (0, \s);
\coordinate (W) at (-\s, 0);
\coordinate (S) at (0, -\s);

\coordinate (LN) at (-2*\s, \s);
\coordinate (LW) at (-3*\s, 0);
\coordinate (LS) at (-2*\s, -\s);

\coordinate (RN) at (2*\s, \s);
\coordinate (RE) at (3*\s, 0);
\coordinate (RS) at (2*\s, -\s);

\draw (N)--(W)--(LN)--(LW)--(LS)--(W)--(S)--(E)--(RS)--(RE)--(RN)--(E)--cycle;
\draw (LN)--(LS);
\draw (RN)--(RS);
\draw (N)--(S);
\draw (LW) [out = 90, in = 180] to (-2*\s, 1.5*\s) [out = 0, in = 180] to (2*\s, 1.5*\s) [out = 0, in = 90] to (RE);

\node[circle, inner sep = 1.5, fill = white, draw] () at (E) {};
\node[circle, inner sep = 1.5, fill = white, draw] () at (N) {};
\node[circle, inner sep = 1.5, fill = white, draw] () at (W) {};
\node[circle, inner sep = 1.5, fill = white, draw] () at (S) {};
\node[circle, inner sep = 1.5, fill = white, draw] () at (LN) {};
\node[circle, inner sep = 1.5, fill = white, draw] () at (LW) {};
\node[circle, inner sep = 1.5, fill = white, draw] () at (LS) {};
\node[circle, inner sep = 1.5, fill = white, draw] () at (RN) {};
\node[circle, inner sep = 1.5, fill = white, draw] () at (RE) {};
\node[circle, inner sep = 1.5, fill = white, draw] () at (RS) {};
\end{tikzpicture}}
\caption{Forbidden subgraphs in \cref{Huang}.}
\label{Fig:Huang}
\end{figure}

Huang and Qi \cite{MR4374026} considered a subclass of planar graphs without chorded $6$-cycles, and proved the following result. Note that they did not prove the result for all the planar graphs without chorded $6$-cycles. 

\begin{theorem}[Huang and Qi \cite{MR4374026}]\label{Huang}
Every planar graph without subgraphs isomorphic to the configurations in \cref{Fig:Huang} is DP-$4$-colorable. 
\end{theorem}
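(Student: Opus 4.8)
The plan is to argue by contradiction through a minimum counterexample, using the discharging method, with the extension result \cref{PAIRWISE3456} (and the base result \cref{MRTHREE}) supplying the reducible configurations. First I would recast DP-$4$-colorability in the transversal language: a DP-$4$-coloring is exactly a strictly $f$-degenerate transversal for a $\{0,1\}$-valued $f$ with $\eta(G)\ge 4$. To keep the induction robust I would instead prove the stronger statement in which $f$ is permitted to take values in $\{0,1,2\}$ with $\eta(G)\ge 4$, since deleting an already-coloured vertex and recording its effect on the neighbours is naturally modelled by decrementing $f$, and the hypotheses of \cref{MRTHREE,PAIRWISE3456} are already phrased in this $\{0,1,2\}$ framework. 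So suppose $(G,H,f)$ is a counterexample minimizing $|V(G)|+|E(G)|$: $G$ is planar, contains none of the configurations of \cref{Fig:Huang}, $f\colon V(H)\to\{0,1,2\}$ with $\eta(G)\ge 4$, yet $H$ has no strictly $f$-degenerate transversal.

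The structural foothold is the observation that forbidding configurations (a) and (b) of \cref{Fig:Huang} is exactly forbidding all chorded $6$-cycles: the only ways a single chord can sit inside a $6$-cycle are the short chord of (a), splitting it into a triangle and a pentagon, and the long chord of (b), splitting it into two quadrilaterals. Hence $G$ has no chorded $6$-cycle; in particular no two $4$-cycles share an edge and no triangle shares an edge with a $5$-cycle, while configuration (c) rules out one further local pattern built from two near-adjacent faces. For the reducibility step I would combine minimality with \cref{PAIRWISE3456} in the standard way: locate a good $4^{-}$-cycle $C$, apply minimality to the two sides of $C$, and glue the resulting transversals by the extension guaranteed by \cref{PAIRWISE3456}, contradicting the choice of $G$. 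This lets me exclude short separating cycles and a short list of local configurations (a small-degree vertex surrounded by light faces, adjacent light vertices on a short cycle, and the like). A crucial prerequisite is that \cref{PAIRWISE3456} actually applies, so I must verify that the absence of chorded $6$-cycles together with the absence of (c) already excludes every configuration of \cref{FIGPAIRWISE3456} (each of those is an arrangement of pairwise adjacent $3,4,5,6$-cycles, and so contains a chorded $6$-cycle or a copy of (c)); where short cycles are entirely absent, \cref{MRTHREE} supplies the base case directly after checking that $G$ then avoids the configurations of \cref{E}.

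For the discharging step I would assign initial charge $\mu(v)=\deg(v)-4$ to each vertex $v$ and $\mu(\phi)=\deg(\phi)-4$ to each face $\phi$, so that Euler's formula gives total charge $-8$. I would then design rules sending charge from large faces and high-degree vertices toward $3$-faces and low-degree vertices, and argue that every vertex and every face ends with nonnegative charge, contradicting the total $-8$. The inputs to this accounting are the reducible configurations established above (no light vertex/face clusters), the resulting lower bound on minimum degree, and above all the severe restriction on how $3$- and $4$-faces may cluster once chorded $6$-cycles and configuration (c) are banned.

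I expect the main obstacle to be the discharging analysis around $6$-faces and around the short cycles surviving reduction: pinning down exactly how $3$-faces and $4$-faces can surround a given vertex or a given $6$-face in a chorded-$6$-cycle-free graph, and then making the rules tight enough that no vertex or face retains positive charge. A secondary but essential technical point is the careful bookkeeping that the forbidden configurations of \cref{Fig:Huang} indeed guarantee the hypotheses of \cref{MRTHREE,PAIRWISE3456} at every reduction, so that those results are legitimately available throughout.
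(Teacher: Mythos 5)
Your plan hinges on the parenthetical claim that every configuration of \cref{FIGPAIRWISE3456} ``contains a chorded $6$-cycle or a copy of (c)'', and that claim is false; with it, the main engine of your proof disappears. Consider configurations \cref{fig:subfig:g} and \cref{fig:subfig:h}: each is a fan of three triangles $Ov_{1}v_{2}$, $Ov_{2}v_{3}$, $Ov_{3}v_{4}$ with a $6$-cycle attached along a single edge of the fan ($v_{2}v_{3}$, respectively $v_{1}v_{2}$). In these graphs the four added hexagon vertices all have degree $2$, so every cycle through any of them must use the whole attached path, and the only $6$-cycle in the entire configuration is the attached hexagon itself, which is chordless; all cycles avoiding those degree-$2$ vertices live on the five fan vertices and have length at most $5$. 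So neither configuration contains a chorded $6$-cycle. Neither can contain the necklace (the third configuration of \cref{Fig:Huang}) either, since the necklace has $10$ vertices while \cref{fig:subfig:g} and \cref{fig:subfig:h} have only $9$. Hence a planar graph satisfying the hypotheses of \cref{Huang} can perfectly well contain \cref{fig:subfig:g} or \cref{fig:subfig:h} (indeed, configuration \cref{fig:subfig:g} taken as a graph by itself is such an example), so \cref{PAIRWISE3456} is simply not applicable, and your reducibility/gluing step has nothing to run on. A similar mismatch afflicts your fallback on \cref{MRTHREE}: configuration (a) of \cref{E} is a triangle sharing an edge with a $4$-cycle, i.e.\ a chorded $5$-cycle, which again produces no chorded $6$-cycle and no necklace, so outside the special case where all short cycles are absent that theorem is not available either.

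For what it is worth, \cref{Huang} is not proved in this paper at all; it is quoted from Huang and Qi, and the paper's own, closely related \cref{C6} (which forbids chorded $6$-cycles together with the different configuration (c) of \cref{fig:FORBID}) is proved by quite different machinery: a minimal counterexample with a precolored triangle kept on the outer face, reducible configurations obtained from the nested-pair extension theorems \cref{WW} and \cref{EXNN-RC} applied to the specific local patterns \cref{RC-1}--\cref{RC-7}, a classification of how $3$-faces aggregate into clusters, and a discharging argument in which clusters themselves carry and exchange charge. To make your outline work you would have to replace the appeal to \cref{PAIRWISE3456} by extension lemmas of this type, proved for the configurations that genuinely survive the hypotheses of \cref{Huang}; the theorem cannot be obtained by hypothesis-matching against \cref{PAIRWISE3456} or \cref{MRTHREE}.
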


In this paper, we mainly prove the following result.

\begin{theorem}\label{C6}
Let $G$ be a planar graph without any subgraph isomorphic to the configurations in \cref{fig:FORBID}, and let $abca$ be a $3$-cycle in $G$. Let $H$ be a cover of $G$ and $f$ be a function from $V(H)$ to $\{0, 1, 2\}$. If $f(v, 1) + f(v, 2) + \dots + f(v, s) \geq 4$ for each $v \in V(G)$, then each strictly $f$-degenerate transversal of $H[L_{a} \cup L_{b} \cup L_{c}]$ can be extended to a strictly $f$-degenerate transversal of $H$. 
\end{theorem}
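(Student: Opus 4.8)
The plan is to argue by contradiction from a minimal counterexample and to combine a list of reducible configurations with a discharging argument. Let $(G, H, f, abca, R_{0})$ be a counterexample minimizing $|V(G)|$, so that $G$ is a plane graph avoiding the configurations of \cref{fig:FORBID}, $abca$ is a $3$-cycle, and $R_{0}$ is a strictly $f$-degenerate transversal of $H[L_{a} \cup L_{b} \cup L_{c}]$ that does not extend to $H$. First I would perform two standard normalizations. Since any $3$-cycle in a plane graph bounds a disc, if $abca$ is a separating triangle I would split $G$ along it and extend $R_{0}$ to the interior and the exterior independently, each being a smaller instance; hence I may assume $abca$ is facial and, after re-embedding, that it bounds the outer face $F_{0}$. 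I would also assume $G$ is $2$-connected, treating blocks and cut vertices separately.

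The engine of the reducibility is the weight hypothesis $\sum_{x \in L_{v}} f(x) \ge 4$ together with the matching structure of the cover. The basic lemma is that every $v \in V(G) \setminus \{a,b,c\}$ has $d_{G}(v) \ge 4$: otherwise, by minimality $R_{0}$ extends to a strictly $f$-degenerate transversal $R'$ of $H - L_{v}$, and for each color $(v,i)$ the number $r_{i}$ of its neighbours lying in $R'$ satisfies $\sum_{i} r_{i} \le d_{G}(v) \le 3 < 4 \le \sum_{i} f(v,i)$, so some color $(v,i)$ has $f(v,i) > r_{i}$; adjoining $(v,i)$ to $R'$ yields a strictly $f$-degenerate transversal of $H$ (the new vertex can be deleted first in any subgraph), a contradiction. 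The same scheme, applied to a small vertex set $S \subseteq V(G)\setminus\{a,b,c\}$ rather than a single vertex, gives the finer reducible configurations: after extending $R_{0}$ to $H - \bigcup_{s\in S} L_{s}$, one colors $S$ by checking that the residual instance on $H[\bigcup_{s\in S}L_{s}]$, with each $f(s,\cdot)$ discounted by the number of already-coloured external neighbours, still admits a strictly $f$-degenerate transversal. I expect this to rule out, in particular, triangles all of whose vertices have degree $4$ and certain short paths of degree-$4$ vertices incident to triangles; the precise reducible list is dictated by what the discharging below needs.

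With these configurations excluded, I would run a discharging argument. Assign initial charge $\mu(x) = d(x) - 4$ to each vertex $x$ and $\mu(F) = \ell(F) - 4$ to each face $F$; by Euler's formula $\sum_{x} \mu(x) + \sum_{F} \mu(F) = -8$. The only elements of negative initial charge are $3$-faces (charge $-1$) and the low-degree vertices $a,b,c$ on the outer face, so the design must route charge from $5^{+}$-faces and $5^{+}$-vertices to the $3$-faces while confining the genuine deficiency of $8$ to the precolored triangle $F_{0}$. Here the two hypotheses do the combinatorial work: the absence of chorded $6$-cycles restricts how $3$-faces can be packed around $5$- and $6$-faces and around short cycles, and the absence of the configurations in \cref{fig:FORBID} (which, for a paper on chorded $6$-cycles, should include both chord types of a $6$-cycle together with the few triangle-clustering patterns needed) limits the number of $3$-faces that a given vertex or face can be asked to fund. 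After fixing rules I would verify $\mu^{*}(x) \ge 0$ for every vertex, $\mu^{*}(F) \ge 0$ for every face other than $F_{0}$, and that the final charge retained by $\{a,b,c,F_{0}\}$ exceeds $-8$; since discharging preserves the total, this contradicts $\sum \mu = -8$ and completes the proof.

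The main obstacle will be the discharging verification, specifically matching the discharging rules to the precise forbidden list so that every $3$-face is fully funded. The delicate point, relative to ordinary $4$-choosability proofs, is that reducibility lives in the variable-degeneracy setting: because $f$ may take the value $2$, a color $(v,i)$ can legitimately keep one neighbour in the transversal, so the reducibility bookkeeping must track residual $f$-values rather than mere list sizes, and a degree-$4$ vertex is in general irreducible. Getting the reducible family just rich enough to force nonnegative final charges, while keeping every member genuinely reducible in this stronger sense, is where the real effort lies; the normalization around the precolored triangle $F_{0}$ is routine by comparison, handled by the separating-triangle reduction and a dedicated accounting of the charges of $a$, $b$, $c$, and $F_{0}$.
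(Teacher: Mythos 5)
Your high-level frame (minimal counterexample, normalizing so the precolored triangle bounds the outer face, minimum internal degree $4$, then discharging with $\mu = d - 4$) matches the paper, but there are two genuine gaps, one of which is a wrong step rather than a mere omission. The wrong step is your reducibility mechanism. You propose the naive scheme: extend $R_{0}$ to $H - \bigcup_{s \in S} L_{s}$ by minimality, discount each $f(s,\cdot)$ by the number of already-colored external neighbours, and color $S$ from the residual instance. This cannot deliver the configurations a discharging proof needs, and in particular cannot rule out the example you name, a triangle all of whose vertices have degree $4$: there each vertex retains residual weight $4 - 2 = 2$, and a cover of $K_{3}$ with weight $2$ per vertex need not admit a strictly $f$-degenerate transversal (already for $f \in \{0,1\}$ this is DP-coloring $K_{3}$ with two colors, and $\chi_{DP}(K_{3}) = 3$). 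In fact such triangles are not reducible and need not be: in the paper a lone internal $3$-face is funded entirely by its three adjacent $4^{+}$-faces. What the paper actually uses is the pairing machinery of Wang \etal (\cref{WW} and \cref{EXNN-RC}): for an induced subgraph $K$ with a carefully ordered vertex set, a color is chosen at $v_{1}$ so as to ``save'' a color at an adjacent vertex $v_{m}$ with $d_{G}(v_{m}) \leq k$, and up to three such pairs are nested. All of the reducible configurations \cref{RC-1}--\cref{RC-7} (clusters of triangles mixing $4$- and $5$-vertices) are established this way; none of them follows from residual counting, and your proposal contains no substitute for this idea.

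The second gap is that the discharging — which is the bulk of the paper — is entirely deferred: you state the initial charge and the target inequality but give no rules. The paper's argument is not a routine verification: it organizes $3$-faces into four cluster types, runs ten rules including transfers from the outer face through ``middle edges,'' and, crucially, needs a second-stage rule (\ref{R10}) in which a ``special'' cluster $\mathcal{C}_{4}$ subsidizes a deficient ``special'' cluster $\mathcal{C}_{3}$ through a shared $5$-vertex; the existence of that subsidy is exactly what the reducible configurations \cref{RC-1}--\cref{RC-7} guarantee, so the reducibility list and the rules cannot be designed independently as your plan suggests. A smaller but real issue is your separating-triangle normalization: extending to the interior and exterior ``independently'' does not compose, because a subgraph of $H[R_{1} \cup R^{*}]$ may straddle the cutting triangle. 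The paper's \cref{NS} handles all separating $3$-cycles (not only $abca$, and this stronger fact is used throughout the structural lemmas so that every $3$-cycle bounds a face), and makes the gluing legitimate by resetting $f^{*} \equiv 1$ on the three chosen list-vertices and deleting the edges of the cover inside the triangle before invoking minimality on the interior. Without that modification the union of the two partial transversals need not be strictly $f$-degenerate.
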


\begin{figure}
\centering
\subcaptionbox{}
{\begin{tikzpicture}[scale = 0.8]
\def\s{1}
\foreach \ang in {1, ..., 6}
{
\def\pointname{v\ang}
\coordinate (\pointname) at ($(\ang*60:\s)$);}
\draw (v1)--(v2)--(v3)--(v4)--(v5)--(v6)--cycle;
\draw (v2)--(v4);
\foreach \ang in {1, ..., 6}
{
\node[circle, inner sep = 1.5, fill = white, draw] () at (v\ang) {};
}
\end{tikzpicture}}\hspace{1.5cm}
\subcaptionbox{}
{\begin{tikzpicture}[scale = 0.8]
\def\s{1}
\foreach \ang in {1, ..., 6}
{
\def\pointname{v\ang}
\coordinate (\pointname) at ($(\ang*60:\s)$);}
\draw (v1)--(v2)--(v3)--(v4)--(v5)--(v6)--cycle;
\draw (v3)--(v6);
\foreach \ang in {1, ..., 6}
{
\node[circle, inner sep = 1.5, fill = white, draw] () at (v\ang) {};
}
\end{tikzpicture}}\hspace{1.5cm}
\subcaptionbox{}
{\begin{tikzpicture}[scale = 0.8]
\def\s{1}
\coordinate (E) at (\s, 0);
\coordinate (N) at (0, \s);
\coordinate (W) at (-\s, 0);
\coordinate (S) at (0, -\s);

\coordinate (EO) at (2*\s, 0);
\coordinate (NO) at (0, 2*\s);
\coordinate (WO) at (-2*\s, 0);

\coordinate (E1) at ($(EO)!1!-60:(E)$);
\coordinate (E2) at ($(EO)!1!-120:(E)$);
\coordinate (E3) at ($(EO)!1!-180:(E)$);
\coordinate (N1) at ($(NO)!1!-60:(N)$);
\coordinate (N2) at ($(NO)!1!-120:(N)$);
\coordinate (N3) at ($(NO)!1!-180:(N)$);
\coordinate (W1) at ($(WO)!1!-60:(W)$);
\coordinate (W2) at ($(WO)!1!-120:(W)$);
\coordinate (W3) at ($(WO)!1!-180:(W)$);

\draw (E)--(N)--(W)--(S)--cycle;
\draw (E3)--(W3);
\draw (S)--(N3);

\draw (E)--(E1)--(E2)--(E3);
\draw (E1)--(EO)--(E2);
\draw (N)--(N1)--(N2)--(N3);
\draw (N1)--(NO)--(N2);
\draw (W)--(W1)--(W2)--(W3);
\draw (W1)--(WO)--(W2);

\foreach \ang in {1, 2, 3}
{
\node[circle, inner sep = 1.5, fill = white, draw] () at (E\ang) {};
\node[circle, inner sep = 1.5, fill = white, draw] () at (N\ang) {};
\node[circle, inner sep = 1.5, fill = white, draw] () at (W\ang) {};
}

\node[circle, inner sep = 1.5, fill = white, draw] () at (E) {};
\node[circle, inner sep = 1.5, fill = white, draw] () at (N) {};
\node[circle, inner sep = 1.5, fill = white, draw] () at (W) {};
\node[circle, inner sep = 1.5, fill = white, draw] () at (S) {};
\node[circle, inner sep = 1.5, fill = white, draw] () at (EO) {};
\node[circle, inner sep = 1.5, fill = white, draw] () at (NO) {};
\node[circle, inner sep = 1.5, fill = white, draw] () at (WO) {};
\node[circle, inner sep = 1.5, fill = white, draw] () at (O) {};
\end{tikzpicture}}

\caption{Forbidden subgraphs in \cref{C6}.}
\label{fig:FORBID}
\end{figure}

Note that a triangle is required in \cref{C6}, but the triangle is not necessary in the following corollary. If $G$ is a triangle-free graph, then we can attach a triangle at a vertex $b$ such that $b$ is a cut vertex, and then by \cref{C6} we obtain a desired strictly $f$-degenerate transversal. 

\begin{corollary}
Let $G$ be a planar graph without any subgraph isomorphic to the configurations in \cref{fig:FORBID}, and let $H$ be a cover of $G$ and $f$ be a function from $V(H)$ to $\{0, 1, 2\}$. If $f(v, 1) + f(v, 2) + \dots + f(v, s) \geq 4$ for each $v \in V(G)$, then $H$ has a strictly $f$-degenerate transversal. 
\end{corollary}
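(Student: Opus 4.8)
The plan is to reduce the corollary to \cref{C6} in both cases by producing a $3$-cycle together with a strictly $f$-degenerate transversal of the cover restricted to that cycle, which then serves as the seed $R_0$ that \cref{C6} extends to all of $H$. Everything hinges on two elementary facts: that a single triangle always carries such a seed, and that attaching a pendant triangle to a triangle-free $G$ destroys none of the hypotheses.

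First I would settle the base case. Any vertex chosen from $L_v$ must have $f$-value at least $1$ (otherwise its own singleton subgraph violates strict $f$-degeneracy), and since $f\le 2$ while $\sum_{x\in L_v}f(x)\ge 4$, each $L_v$ has at least two vertices of positive $f$-value. For a triangle $abc$ I would choose greedily: take $x_a\in L_a$ with $f(x_a)\ge 1$; take $x_b\in L_b$ of positive $f$-value not matched to $x_a$ (possible, since at most one vertex of $L_b$ meets $x_a$ and there are at least two candidates); then inspect $L_c$. If some positive vertex of $L_c$ is matched to neither $x_a$ nor $x_b$, choosing it gives an edgeless triple. Otherwise every positive vertex of $L_c$ is matched to exactly one of $x_a,x_b$; since each of $x_a,x_b$ meets at most one vertex of $L_c$, there are then exactly two positive vertices in $L_c$, and as their $f$-values sum to at least $4$ both equal $2$, so selecting the one adjacent to $x_a$ yields a single edge with an endpoint of $f$-value $2$. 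In each case the induced subgraph is strictly $f$-degenerate, giving a seed $R_0$; \cref{C6} then finishes the case in which $G$ already contains a $3$-cycle.

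For triangle-free $G$ I would follow the remark preceding the statement: fix any $b\in V(G)$ (the empty graph being trivial), form $G'$ by adding new vertices $p,q$ and edges $bp,pq,qb$, and extend $H$ and $f$ to a cover $H'$ of $G'$, taking the three new matchings empty and giving each of $L_p,L_q$ two colors of $f$-value $2$, so the hypothesis $\sum f\ge 4$ persists. The crucial check is that $G'$ still forbids every configuration of \cref{fig:FORBID}: the new vertices have degree $2$ and all their incident edges lie in the pendant triangle $bpq$, so any copy of a forbidden configuration containing $p$ or $q$ would, each forbidden configuration having minimum degree at least $2$, be forced to contain the whole triangle with $b$ a cut vertex separating $\{p,q\}$ from the remaining, nonempty part of the copy; this contradicts the $2$-connectedness of the forbidden configurations. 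Hence $G'$ satisfies the hypotheses, the triangle case applies to $G'$, and restricting the resulting transversal to $\bigcup_{v\in V(G)}L_v$ yields a strictly $f$-degenerate transversal of $H$, since $H$ is an induced subcover of $H'$ and strict $f$-degeneracy is inherited by induced subgraphs.

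The genuine mathematical content sits entirely in \cref{C6}, which I am assuming; relative to it the corollary is routine. The only steps demanding attention are the last sub-case of the base-case greedy (where the bound $\sum f\ge 4$ is used to force two $f$-values equal to $2$) and the verification that a pendant triangle creates no forbidden configuration. I expect the latter to be the main obstacle to state cleanly, as it amounts to confirming that no configuration of \cref{fig:FORBID} admits a cut vertex whose removal leaves a single pendant edge; this follows from their $2$-connectedness, which I would confirm directly from the figures.
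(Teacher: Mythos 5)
Your proposal is correct and follows essentially the same route as the paper: the paper's justification is precisely the remark that when $G$ contains a triangle one applies \cref{C6} directly, and when $G$ is triangle-free one attaches a pendant triangle at a cut vertex $b$ and then applies \cref{C6}. Your additional details (the greedy construction of the seed $R_{0}$ on $H[L_{a}\cup L_{b}\cup L_{c}]$, the $2$-connectedness check that the pendant triangle creates no configuration of \cref{fig:FORBID}, and the restriction argument) are correct fillings-in of steps the paper treats as routine, not a different method.
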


\begin{corollary}
Every planar graph without any subgraph isomorphic to the configurations in \cref{fig:FORBID} is DP-$4$-colorable. 
\end{corollary}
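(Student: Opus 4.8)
The plan is to obtain this statement as an immediate specialization of the preceding corollary, using the fact that DP-colorings are exactly the strictly $f$-degenerate transversals for which $f$ is restricted to take values in $\{0,1\}$. Unwinding the definition of $\chi_{DP}(G)$, it suffices to show that for every cover $H$ of $G$ and every function $f \colon V(H) \to \{0,1\}$ with $\sum_{x \in L_v} f(x) \geq 4$ for all $v \in V(G)$, the cover $H$ admits a DP-coloring.

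First I would observe that such an $f$ is in particular a function from $V(H)$ to $\{0,1,2\}$, and that the requirement $\sum_{x \in L_v} f(x) \geq 4$ for every $v$ is exactly the condition $\eta(G) \geq 4$ demanded by the preceding corollary. Hence that corollary applies and yields a strictly $f$-degenerate transversal $T$ of $H$. It remains only to verify that $T$ is a DP-coloring. Since $f \leq 1$ everywhere, applying the defining property of $T$ to the one-vertex subgraph on any $x \in T$ gives $0 < f(x)$, hence $f(x) = 1$, so $T$ uses only permitted colors. Moreover, if $H[T]$ contained an edge $uv$, then in the subgraph induced by $\{u,v\}$ each endpoint has degree $1$, whereas strict $f$-degeneracy would require some vertex whose degree is less than its $f$-value, which is at most $1$; this is impossible, so $H[T]$ is edgeless. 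Therefore $T$ is a proper DP-coloring of $H$, and since $H$ and $f$ were arbitrary we conclude $\chi_{DP}(G) \leq 4$.

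I do not expect any real obstacle at this step: all of the work is already carried out in \cref{C6} and the corollary derived from it, and the DP-colorability follows simply because strictly $f$-degenerate transversals valued in $\{0,1,2\}$ strictly generalize DP-colorings, which correspond to the case $f \in \{0,1\}$.
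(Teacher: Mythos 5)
Your proposal is correct and matches the paper's (implicit) derivation: the paper states this corollary without proof precisely because it is the immediate specialization of the preceding corollary to functions $f \colon V(H) \to \{0,1\}$, which by the paper's own definition of DP-coloring is exactly what you carry out. Your extra verification that the transversal uses only colors with $f$-value $1$ and induces an edgeless subgraph is sound but redundant, since the paper defines a DP-coloring to be a strictly $f$-degenerate transversal with $f$ valued in $\{0,1\}$.
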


Although it is not proved that every planar graph without chorded $6$-cycles is DP-$4$-colorable, but we believe that it is true, so we make the following conjecture.

\begin{conjecture}
Every planar graph without chorded $6$-cycles is DP-$4$-colorable. 
\end{conjecture}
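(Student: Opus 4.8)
The plan is to argue by contradiction through a minimal counterexample together with a discharging argument. Suppose the theorem fails, and let $(G, abc, H, f, R_{0})$ be a counterexample with $|V(G)|$ minimum, where $R_{0}$ is a strictly $f$-degenerate transversal of $H[L_{a} \cup L_{b} \cup L_{c}]$ that does not extend. First I would record the basic extension criterion: if $R$ is a strictly $f$-degenerate transversal of $H - L_{v}$ and some $(v,i) \in L_{v}$ has fewer than $f(v,i)$ neighbours in $R$, then $R \cup \{(v,i)\}$ is again strictly $f$-degenerate, since any subgraph avoiding $(v,i)$ inherits a witness from $R$ while any subgraph containing $(v,i)$ uses $(v,i)$ itself as witness. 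Writing $m_{i}$ for the number of neighbours of $(v,i)$ in $R$, we have $\sum_{i} m_{i} \le d_{G}(v)$ whereas $\sum_{i} f(v,i) \ge 4$, so whenever $d_{G}(v) \le 3$ some colour satisfies $m_{i} < f(v,i)$. This makes every vertex $v \notin \{a,b,c\}$ of degree at most $3$ reducible, so the minimal counterexample has minimum degree at least $4$ off the precoloured triangle. A separating triangle could be used to split $G$ and extend each side by minimality, gluing along the common $3$-cycle; hence I would assume $G$ has no separating triangle and embed it with $abc$ bounding the outer face.

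Next I would translate the hypothesis into face-adjacency restrictions. A $3$-face sharing an edge with a $5$-face yields a $6$-cycle whose shared edge is a distance-$2$ chord, namely configuration~(a); two $4$-faces sharing an edge yield a $6$-cycle with a distance-$3$ chord, namely configuration~(b). Thus in $G$ no $3$-face is adjacent to a $5$-face, no two $4$-faces are adjacent, and configuration~(c) rules out one further triangle/quadrilateral cluster. Using the extension criterion together with delete-and-recolour arguments — remove a small vertex set $S$, extend $R_{0}$ to $G - S$ by minimality, then reinsert the colours of $S$ in an order in which each vertex sees fewer blocked colours than its total $f$-value — I would build the list of reducible configurations governing $4$- and $5$-vertices incident to several $3$-faces and the structure around short faces. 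Here the value $f = 2$ provides the slack that makes the reinsertions succeed, and the forbidden configurations supply the degree bounds the recolourings require.

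Then I would run the discharging. Assign each vertex and each face the initial charge $\mu(x) = d(x) - 4$; Euler's formula gives total charge $-8$. With minimum degree at least $4$, only $3$-faces are negative, each short by one unit, so the rules must route charge to each $3$-face from incident vertices of degree at least $5$ and from neighbouring large faces (recall $5$-faces never touch triangles and $4$-faces have no surplus). After discharging I would verify, face by face and vertex by vertex, that every element other than the precoloured boundary ends nonnegative; a direct accounting of the bounded charge retained on the outer triangle and its vertices then contradicts the global total of $-8$.

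The main obstacle I expect is the discharging analysis around clusters of $3$- and $4$-faces. Because triangles may still abut triangles and quadrilaterals, a high-degree vertex can be surrounded by many small faces all competing for its charge, and one must show that the forbidden configurations — especially configuration~(c) — are exactly what preclude the patterns in which some $3$-face cannot collect its missing unit. Isolating the precise reducible configurations, and then checking that the discharging rules leave no face deficient across all local types, is where essentially all of the work will lie.
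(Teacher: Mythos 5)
There is a genuine gap, and it is decisive: the statement you were given is a \emph{conjecture} in this paper, not a theorem. The authors explicitly state they could not prove it; what they actually prove (\cref{C6}) assumes the absence not only of the two chorded $6$-cycles (configurations (a) and (b) in \cref{fig:FORBID}) but also of configuration (c) in \cref{fig:FORBID}, which contains \emph{no} chorded $6$-cycle. Your proposal silently imports that extra hypothesis: you write that ``configuration~(c) rules out one further triangle/quadrilateral cluster,'' but under the conjecture's hypothesis configuration (c) is perfectly allowed, so no step of your argument may appeal to its absence. Everything else in your sketch (minimum degree $4$ off the precoloured triangle, no separating triangles, $3$-faces not adjacent to $5$-faces, no adjacent $4$-faces, discharging from $5^{+}$-vertices and big faces into triangles) reproduces the paper's route for the weaker theorem.

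This is exactly where that route breaks if configuration (c) is available. In the paper's discharging, an internal cluster $\mathcal{C}_{4}$ (four triangular faces around an internal $4$-vertex) incident with exactly three $5$-vertices retains a surplus of only $\mu^{*}(\mathcal{C}_{4}) \geq \frac{2}{3}$ after the basic rules, while each of those three $5$-vertices may carry a deficient cluster $\mathcal{C}_{3}$ needing up to $\frac{1}{3}$ from it (rule \ref{R10}). If the three deficient $\mathcal{C}_{3}$'s are pairwise disjoint, the $\mathcal{C}_{4}$ must pay $3 \times \frac{1}{3} = 1 > \frac{2}{3}$, and the charge count fails; but three pairwise disjoint $\mathcal{C}_{3}$'s attached at three $5$-vertices of a $\mathcal{C}_{4}$ is precisely configuration (c), which creates no chorded $6$-cycle and hence cannot be excluded under the conjecture. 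The paper escapes only by forbidding it outright, forcing the three $\mathcal{C}_{3}$'s to intersect and then closing the two remaining cases by hand. Your sketch offers no new reducible configuration, no new discharging rule, and no reducibility argument for configuration (c) itself; supplying one is the essential open content of the conjecture, so the proposal as written does not prove the statement.
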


In all the figures, every solid quadrilateral represents a $4$-vertex, pentagon represents a $5$-vertex, and hexagon represents a $6$-vertex. In addition, a vertex is represented by a red solid point if it is a vertex of outer face.

To finished this section, we introduce some notations. Let $G$ be a plane graph. We use $V(G), E(G)$, and $F(G)$ for the vertex set, the edge set, and the face set respectively. A $k$-vertex ($k^+$-vertex, $k^-$-vertex, respectively) represents a vertex of degree $k$ (at least $k$, at most $k$, respectively). A {\em wheel graph on $n$-vertices} is a graph formed by connecting a single vertex to all vertices of an $(n - 1)$-cycle. 

We use $d(f)$ to denote the degree of a face $f$. Let $b(f)$ be the boundary of a face $f$ and write $f = [v_{1}v_{2}\dots v_{d}]$ when $v_{1}, v_{2}, \dots, v_{d}$ are the boundary vertices of $f$ in a cyclic order. A $k$-face ($k^+$-face, $k^-$-face, respectively) represents a face of degree $k$ (at least $k$, at most $k$, respectively). An {\em internal face} is a bounded face that does not share any vertex with the unbounded face. We use $\int(C)$ and $\ext(C)$ to denote the vertices inside and outside a cycle $C$, respectively. If neither $\int(C)$ nor $\ext(C)$ is empty, then the cycle $C$ is called a {\em separating cycle}.

\section{Preliminaries}

Let $\mathscr{G}$ be a class of graphs which is closed under deleting vertices, \ie every induced subgraph is also in $\mathscr{G}$. 

\begin{theorem}[Theorem 5.3 in Wang \etal \cite{Wang2019+}]\label{WW}
Let $k \geq 3$, and $G$ be a graph in $\mathscr{G}$, and let $K$ be an induced subgraph of $G$ with $V(K) = \{v_{1}, v_{2}, \dots, v_{m}\}$ such that the following hold. 
\begin{enumerate}[label = (\roman*)]
\item\label{WW-1} $k - (d_{G}(v_{1}) - d_{K}(v_{1})) > k - (d_{G}(v_{m}) - d_{K}(v_{m}))$;
\item\label{WW-2} $d_{G}(v_{m}) \leq k$ and $v_{1}v_{m} \in E(G)$; 
\item\label{WW-3} For $2 \leq i \leq m - 1$, $v_{i}$ has at most $k - 1$ neighbors in $G - \{v_{i+1}, \dots, v_{m}\}$.
\end{enumerate}
Let $H$ be a cover of $G$ and $f$ be a function from $V(H)$ to $\{0, 1, 2\}$. If $f(v, 1) + \dots + f(v, s) \geq k$ for each vertex $v \in V(G)$, then any strictly $f$-degenerate transversal of $H - \bigcup_{v \in V(K)}L_{v}$ can be extended to that of $H$. 
\end{theorem}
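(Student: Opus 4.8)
The plan is to prove \cref{C6} by a standard minimal-counterexample argument combined with discharging, using \cref{WW} as the main reducibility engine. Suppose, for contradiction, that $G$ is a counterexample minimizing $|V(G)| + |E(G)|$: there is a cover $H$, a function $f\colon V(H) \to \{0,1,2\}$ with $\sum_{x \in L_v} f(x) \geq 4$ for every $v$, a fixed $3$-cycle $abca$, and a strictly $f$-degenerate transversal $R_0$ of $H[L_a \cup L_b \cup L_c]$ that cannot be extended to all of $H$. Throughout, I would take $\mathscr{G}$ to be the class of planar graphs containing none of the configurations in \cref{fig:FORBID}; this class is closed under vertex deletion, so \cref{WW} applies to every induced subgraph. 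The triangle $abca$ is good (every $3$-cycle is good), which is what lets us treat $\{a,b,c\}$ as the precolored seed and read $R_0$ as a strictly $f$-degenerate transversal of $H_0 = H[\bigcup_{i} L_{x_i}]$.

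First I would establish the structural reducible configurations. The engine is \cref{WW}: to show an induced subgraph $K$ with an ordering $v_1, \dots, v_m$ is reducible, I verify its three hypotheses with $k = 4$. Condition \ref{WW-2} needs $v_m$ to be a $4^-$-vertex adjacent to $v_1$; condition \ref{WW-3} needs each internal $v_i$ to have at most three neighbors outside $\{v_{i+1}, \dots, v_m\}$; and \ref{WW-1} is the arithmetic inequality comparing how many edges $v_1$ and $v_m$ send outside $K$. The payoff is that $G$ can have no such deletable set disjoint from $\{a,b,c\}$, for otherwise minimality would let us first extend $R_0$ to $H - \bigcup_{v \in V(K)} L_v$ and then back across $K$ via \cref{WW}, contradicting non-extendability. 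Concretely I would derive: $G$ has minimum degree at least $4$ away from the precolored triangle, no two adjacent $4$-vertices off the triangle (or tightly controlled such adjacencies), and more refined local conditions forbidding light vertices clustered around short faces. The forbidden configurations of \cref{fig:FORBID} — the two chorded $6$-cycles and the large third configuration — are exactly the obstructions that must be excluded so that the remaining local structure is clean enough for discharging.

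Next comes the discharging phase on the plane graph $G$. I assign each vertex $v$ the charge $d(v) - 4$ and each face $f$ the charge $d(f) - 4$; by Euler's formula the total charge is $\sum_v (d(v)-4) + \sum_f (d(f)-4) = -8 < 0$. I then design discharging rules that move charge from $5^+$-vertices and large faces to $3$-faces and $4$-faces (and to the few low-degree vertices permitted near the triangle), and argue that after redistribution every vertex and every face has nonnegative final charge, save for a bounded contribution near the outer/precolored region that I account for separately. Because the reducibility step has already purged all the dangerous local patterns — adjacent short cycles, chords on $6$-cycles, and the specific configurations of \cref{fig:FORBID} — each remaining $3$-face and $4$-face can be shown to receive enough charge from its surroundings. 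Summing the final charges yields a nonnegative total, contradicting the value $-8$, and hence no counterexample exists.

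The hard part will be the reducibility analysis: checking hypothesis \ref{WW-1} of \cref{WW} for each candidate configuration requires carefully choosing the deletion order $v_1, \dots, v_m$ so that the vertex absorbed last is a genuine $4^-$-vertex adjacent to the first, while every intermediate vertex retains at most three already-present neighbors. This ordering is delicate precisely around the chorded $6$-cycle and the third, larger configuration, since a chord or an extra incident short cycle can push some $v_i$ to four live neighbors and break \ref{WW-3}; this is why those configurations must be forbidden outright rather than merely shown reducible. Getting the discharging rules to cover every face type without a surplus of negative charge — especially $4$-faces adjacent to other short faces, which the forbidden chorded $6$-cycles are designed to control — will be the second delicate point, and I expect to need a short case split on the degree sequence around each problematic face. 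The precolored triangle contributes only an $O(1)$ correction that the global count $-8$ comfortably absorbs.
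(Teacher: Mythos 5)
Your proposal does not address the statement at hand. The statement is \cref{WW} itself --- the transversal-extension lemma (Theorem 5.3 of Wang \etal \cite{Wang2019+}) --- and what you have written is a minimal-counterexample-plus-discharging plan for \cref{C6}, the paper's main theorem, \emph{using} \cref{WW} as a black box. As a proof of \cref{WW} this is circular (you invoke the very result you are asked to prove) and, more importantly, it is the wrong kind of argument: \cref{WW} is not a global statement about planar graphs at all. It holds for an arbitrary graph $G$ in a vertex-deletion-closed class, makes no reference to planarity, Euler's formula, or faces, and its hypotheses \ref{WW-1}--\ref{WW-3} are precisely the bookkeeping needed for a purely local, sequential extension argument. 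Nothing in your sketch engages with conditions \ref{WW-1}--\ref{WW-3}, with the cover $H$, or with how a strictly $f$-degenerate transversal of $H - \bigcup_{v \in V(K)} L_{v}$ is pushed across $K$.

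What an actual proof requires is the greedy argument with the pairing trick that the paper alludes to immediately after the statement: one first chooses the vertex of $L_{v_{1}}$ in the transversal carefully so as to ``save'' an option for $v_{m}$ --- this is possible because \ref{WW-1} says $d_{G}(v_{1}) - d_{K}(v_{1}) < d_{G}(v_{m}) - d_{K}(v_{m})$, so $v_{1}$ retains strictly more residual freedom (at least $k$ minus its external degree, measured through $f$) than $v_{m}$ needs; one then extends to $v_{2}, \dots, v_{m-1}$ in order, each step succeeding because by \ref{WW-3} the vertex $v_{i}$ has at most $k-1$ neighbors already processed, so the $f$-values on $L_{v_{i}}$ cannot all be exhausted; finally $v_{m}$, which by \ref{WW-2} has $d_{G}(v_{m}) \leq k$ neighbors all now colored, still has an admissible choice because the edge $v_{1}v_{m} \in E(G)$ was neutralized by the initial choice at $v_{1}$. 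Throughout, one must also verify the strict $f$-degeneracy of the resulting transversal (e.g., by decrementing $f$ on neighbors as colors are fixed and checking a vertex of positive residual $f$ exists in every nonempty subgraph), which is exactly the content your proposal never touches. Note also that the paper itself does not reprove \cref{WW}; it cites it, so even structurally your discharging machinery (and the incorrect total charge $-8$ in place of the paper's normalization $\mu(D) = d(D) + 4$ giving positive final charge on $D$) belongs to the proof of \cref{C6}, not here.
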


In the proof of \cref{WW}, a certain color is chosen for $v_{1}$ to ``save'' a color for $v_{m}$. We call $v_{1}$ and $v_{m}$ are {\em paired}. Wang \etal \cite{Wang2019+} generalized \cref{WW} to the following result with at most three nested pairs. 

\begin{theorem}[Theorem 5.5 and 5.6 in Wang \etal \cite{Wang2019+}]\label{EXNN-RC}
Let $G$ be a graph in $\mathscr{G}$, and let $K$ be a subgraph isomorphic to a configuration in \cref{RC-1}--\cref{RC-7} (with the requirements in the captions). Let $H$ be a cover of $G$ and $f$ be a function from $V(H)$ to $\{0, 1, 2\}$. If $f(v, 1) + \dots + f(v, s) \geq 4$ for each vertex $v \in V(G)$, then any strictly $f$-degenerate transversal of $H - \bigcup_{v \in V(K)}L_{v}$ can be extended to that of $H$. 
\end{theorem}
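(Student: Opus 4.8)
The plan is to prove \cref{EXNN-RC} by extending the color-counting and pairing argument behind \cref{WW} so that it can handle up to three nested pairs simultaneously; unlike the main theorem this is a standalone reducibility statement, so there is no minimal-counterexample induction, only a direct extension of the given coloring. First I would record the basic fact that drives every such argument. View a transversal as a coloring $\phi$ assigning to each $v$ a color $\phi(v) \in [s]$, and recall that $H[T]$ is strictly $f$-degenerate precisely when the colored vertices can be ordered so that each has fewer than $f$ neighbors preceding it. Extending the given strictly $f$-degenerate transversal $R$ of $H - \bigcup_{v \in V(K)}L_{v}$ then amounts to coloring the vertices of $K$ one after another, appended to a good ordering of $R$, so that when a vertex $w$ is colored the number of its earlier neighbors whose chosen colors are matched to $\phi(w)$ is less than $f(w,\phi(w))$. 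The key slack estimate is that a vertex $w$ with $p$ earlier colored neighbors is colorable whenever $p < \sum_{c} f(w,c)$: the matchings consume at most $p$ color-units while the total capacity is at least $k = 4$, so some color retains positive slack.

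With this in hand I would fix, for each configuration $K$ in \cref{RC-1}--\cref{RC-7}, the distinguished pairs $(s_{1},t_{1}), \dots, (s_{r},t_{r})$ with $r \le 3$ that the configuration is built around (the analogue of the single pair $(v_{1},v_{m})$ in \cref{WW}), together with a linear ordering of $V(K)$ that colors each saver $s_{j}$ early and its partner $t_{j}$ late, nesting the pairs so that the $t_{j}$ are the last vertices colored, outermost pair last. The caption requirements on each configuration supply exactly the data that play the role of conditions \ref{WW-1}--\ref{WW-3} of \cref{WW}: each $t_{j}$ has degree at most $k$ in $G$ and is adjacent to $s_{j}$, every intermediate vertex sees at most $k-1$ earlier neighbors, and each saver has strictly smaller external degree than its partner. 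Every non-saved vertex, and every saver, is then colored greedily by the slack estimate, since by construction it has at most $k-1$ previously colored neighbors.

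The crux is the color-saving step for the saved vertices $t_{j}$, which may have full back-degree $k$ and hence no slack of their own. For a single pair this is the mechanism named after \cref{WW}: because the saver $s_{j}$ is colored while it still has spare colors (its small external degree leaves more than one admissible color), I can choose $\phi(s_{j})$ so that the edge of $M_{s_{j}t_{j}}$ at $\phi(s_{j})$ lands on a color of $t_{j}$ already blocked by another neighbor of $t_{j}$; the color that $s_{j}$ would otherwise have blocked is thereby freed, the number of colors forbidden to $t_{j}$ drops below $k$, and $t_{j}$ becomes colorable. I would perform this save for the innermost pair first and work outward, checking at each stage that the color just fixed for a saver leaves intact the spare colors that the enclosing savers still need.

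The main obstacle is precisely this compatibility of the nested saves: with three pairs a saver may itself be a neighbor of another pair's saved vertex, so the color forced on it for one save can interfere with another. I would control this through the nesting order, committing each inner save before the outer savers are asked to act, so that every outer saver still chooses from at least two colors the inner commitments have not exhausted, and through a short case analysis over the seven configurations of \cref{RC-1}--\cref{RC-7}, where the captioned external-degree inequalities are what keep enough colors free at each saver. Verifying these inequalities configuration by configuration, and confirming that all three saves can be realized at once, is the laborious but routine heart of the argument; once it is done the greedy coloring completes $\phi$ and produces the desired strictly $f$-degenerate transversal of $H$.
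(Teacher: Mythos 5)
The first thing to note is that this paper contains no proof of \cref{EXNN-RC} for you to be compared against: the statement is imported verbatim from Wang \etal \cite{Wang2019+} (Theorems 5.5 and 5.6 there), and the present paper uses it purely as a black box, namely inside the proof of \cref{R-C}. So your text is an attempted reconstruction of the proof in the cited source, not an alternative to anything proved here.

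Judged on its own terms, your plan has the right skeleton --- the ordering/greedy framework with paired ``saver/saved'' vertices is indeed the machinery behind \cref{WW} and its nested-pair generalizations, and your slack estimate ($p < \sum_c f(w,c)$ implies $w$ is colorable) is correct. But the two places where the actual content lives are exactly the places you get wrong or defer. First, the save mechanism as you state it is not correct for variable degeneracy: you propose to choose $\phi(s_j)$ so that the matched edge ``lands on a color of $t_j$ already blocked by another neighbor.'' With $f$-values in $\{0,1,2\}$, a color of $t_j$ may have capacity $2$, so landing on a color blocked once does not waste the block --- it can be precisely what forbids that color; and nothing guarantees that any admissible color of $s_j$ is even matched to an already-blocked color of $t_j$. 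The correct move is unit-counting: the neighbors of $t_j$ other than $s_j$ block at most $d_G(t_j)-1 \leq k-1$ units against total capacity at least $k$, so either at least two free units survive (and $s_j$'s single block is harmless), or exactly one free unit survives on a single critical color $c^{*}$; since $s_j$ keeps at least two admissible colors whose images under the matching $M_{s_jt_j}$ are distinct, at least one choice for $s_j$ avoids $c^{*}$. Second, and more seriously, the compatibility of the up-to-three nested saves across the seven configurations \cref{RC-1}--\cref{RC-7} \emph{is} the theorem; you call it ``laborious but routine'' and do not carry it out. Note also that the caption requirements are non-adjacency and neighborhood-intersection conditions (e.g.\ $w_3w_5 \notin E(G)$, $|N(w_7) \cap \{w_1,\dots,w_6\}| = 2$), not ``external-degree inequalities''; their role, together with the degree specifications in the figures, is exactly to rule out hidden edges that would let an inner save consume the spare colors an outer saver needs. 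Until that configuration-by-configuration verification is executed, you have an outline of the known technique, not a proof of \cref{EXNN-RC}.
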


\section{Proof of \cref{C6}}
Assume that $(G, H, f)$ is a counterexample to \cref{C6} with $|V(G)|$ as small as possible, and $R_{0}$ is a strictly $f$-degenerate transversal of $H[L_{a} \cup L_{b} \cup L_{c}]$. Without loss of generality, assume that $G$ has been embedded in the plane. 
\begin{lemma}\label{NS}
There are no separating $3$-cycles. We may assume that $abca$ bounds the outer face $D$. 
\end{lemma}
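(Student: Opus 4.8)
The plan is to work inside the minimal counterexample $(G,H,f)$ and to prove both assertions by exploiting minimality across a separating triangle. For the first assertion, suppose toward a contradiction that $C = x_1x_2x_3$ is a separating $3$-cycle, so that both $\mathrm{int}(C)$ and $\mathrm{ext}(C)$ are nonempty. Write $G_{\mathrm{in}} = G[\mathrm{int}(C)\cup V(C)]$ and $G_{\mathrm{out}} = G[\mathrm{ext}(C)\cup V(C)]$; each has strictly fewer vertices than $G$, each is planar, and each inherits the absence of every configuration of \cref{fig:FORBID}, while $G$ is recovered by gluing the two along the triangle $C$. Because every edge of $G$ joins two vertices lying weakly on a common closed side of $C$, the connected subgraph $abca$ lies on one side; relabel so that $abca \subseteq G_{\mathrm{out}}$. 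Applying \cref{C6} to $G_{\mathrm{out}}$ (legitimate by minimality) extends the given $R_0$ to a strictly $f$-degenerate transversal $R_{\mathrm{out}}$; its restriction to $L_{x_1}\cup L_{x_2}\cup L_{x_3}$ is a strictly $f$-degenerate transversal of the triangle's cover, and feeding this as the precolouring of $C$ into \cref{C6} for $G_{\mathrm{in}}$ (again by minimality) yields a strictly $f$-degenerate transversal $R_{\mathrm{in}}$ of the interior cover.

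Setting $T = R_{\mathrm{in}}\cup R_{\mathrm{out}}$ gives a transversal of $H$ that agrees with $R_0$, so the whole force of the argument reduces to verifying that $T$ is strictly $f$-degenerate; this is the step I expect to be the main obstacle. The difficulty is that the mere strict $f$-degeneracy of the two pieces does \emph{not} imply that of their union: since $f\le 2$, one can glue two sparse sides into a graph containing a cycle (or denser core) through $C$ that neither side detects on its own. To organise the verification I would check the equivalent condition that every nonempty subgraph $\Gamma\subseteq H[T]$ has a vertex of degree less than $f$. If $V(\Gamma)$ meets no vertex of $\mathrm{int}(C)$ then $\Gamma$ lies in the exterior cover and inherits such a vertex, and symmetrically if it misses $\mathrm{ext}(C)$; moreover any witness found strictly inside (or strictly outside) $C$ has the same degree in $\Gamma$ as on its own side, because $C$ separates, and is therefore a genuine witness. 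The delicate case is when both sides contribute and every low-degree witness sits on $C$. Here I would use that the three vertices of $C$ carry $f\le 2$, so their chosen copies cannot induce a triangle in $H[T]$, together with the fact that the interior extension supplied by the reducibility method underlying \cref{C6} and \cref{WW} can be taken to peel relative to $C$ --- that is, the vertices of $\mathrm{int}(C)$ admit an elimination order in which each has fewer than $f$ neighbours among $V(C)$ and the not-yet-eliminated interior vertices. Eliminating the interior vertices first in this order never raises the degree of a retained vertex and leaves a subgraph of the exterior cover, which then completes the elimination; securing this compatibility of the two colourings across $C$ is precisely the crux, and the point at which a careless choice of extension would fail.

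With no separating $3$-cycle available, the second assertion is a short embedding remark. The triangle $abca$ now has $\mathrm{int}(abca)=\varnothing$ or $\mathrm{ext}(abca)=\varnothing$: in the former case the bounded region enclosed by $abca$ is a face, and since redrawing a plane graph so as to send any prescribed face to the unbounded face alters neither $G$ as an abstract graph nor the data $H$, $f$, $R_0$, nor the list of forbidden subgraphs of \cref{fig:FORBID}, I may re-embed $G$ so that $abca$ bounds the outer face $D$; in the latter case $abca$ already bounds $D$. This justifies the stated normalisation.
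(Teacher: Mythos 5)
Your decomposition along the separating triangle, and the order of the two inductive steps (exterior first, then interior), match the paper's proof; but there is a genuine gap at exactly the point you flag as the crux, and your proposed repair is not available to you. By minimality you may invoke only the \emph{statement} of \cref{C6} on $G_{\mathrm{in}}$, which yields \emph{some} strictly $f$-degenerate transversal $R_{\mathrm{in}}$ extending the restriction of $R_{\mathrm{out}}$ to $L_{x_1}\cup L_{x_2}\cup L_{x_3}$; it does not yield one that ``peels relative to $C$''. That stronger property is a claim about the internal mechanics of a proof you are not permitted to open inside a minimal-counterexample argument, and it can genuinely fail for an arbitrary extension. Concretely: nothing prevents $R_{\mathrm{in}}$ from containing an interior copy $(v,i)$ with $f(v,i)=2$ whose only neighbours in $H[R_{\mathrm{in}}]$ are the two chosen copies on $C$, both with $f$-value $2$ (this is strictly $f$-degenerate on the inside, since those two copies have degree $1$ there); if on the outside the same two copies are joined by a path through two exterior copies of $f$-value $2$ (again strictly $f$-degenerate on the outside, since the $C$-copies have degree $1$ there), then the union contains a $5$-cycle on which every copy has degree exactly $2=f$, hence no witness at all. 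So the two inductive outputs need not glue, and your observation that the chosen copies on $C$ cannot induce a triangle does not address this.

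The paper closes the gap by changing the data \emph{before} the interior induction rather than constraining the extension afterwards: it defines $f^{*}$ to equal $1$ at the three chosen copies $(x_1,i_1),(x_2,i_2),(x_3,i_3)$ (and to agree with $f$ elsewhere), and deletes the cover edges among $L_{x_1}\cup L_{x_2}\cup L_{x_3}$, so that the three chosen copies still form a legitimate strictly $f^{*}$-degenerate transversal of the boundary cover. Applying minimality to the interior with $(H^{*},f^{*})$ then \emph{forces} exactly the peeling property you wanted: in any elimination order of $H^{*}[R^{*}]$, a $C$-copy can be deleted only when it has degree $0$, i.e.\ only after all of its interior neighbours are gone; hence every interior copy is deleted while all of its $C$-neighbours are still present, and its deficiency is computed against them. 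Concatenating this interior order with an elimination order of $H[R_1]$ (the exterior-plus-$C$ part, under the original $f$) shows that $R_1\cup R^{*}$ is strictly $f$-degenerate. Without this modification of $f$ on the boundary your argument does not go through; with it, it becomes the paper's proof.
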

\begin{proof}
Suppose that there is a separating $3$-cycle $C = xyzx$. By symmetry, we may assume that $\{a, b, c\} \cap \, \int(C) = \emptyset$. By the minimality, $R_{0}$ can be extended to a strictly $f$-degenerate transversal $R_{1}$ of $H[\cup_{v\, \notin\, \mathrm{int}(C)}L_{v}]$. We revise the function $f$ to obtain a function $f^{*}$ on $H[\cup_{v\, \notin\, \mathrm{ext}(C)}L_{v}]$ by $f^{*}(x, i_{x}) = f^{*}(y, i_{y}) = f^{*}(z, i_{z}) = 1$, where $(x, i_{x}), (y, i_{y}), (z, i_{z}) \in R_{1}$. Delete all the edges in $H[L_{a} \cup L_{b} \cup L_{c}]$ and obtain a new cover $H^{*}$. By the minimality, $\{(x, i_{x}), (y, i_{y}), (z, i_{z})\}$ can be extended to a strictly $f^{*}$-degenerate transversal $R^{*}$ of $H^{*}[\cup_{v\, \notin\, \mathrm{ext}(C)}L_{v}]$. Thus, $R_{1} \cup R^{*}$ is a strictly $f$-degenerate transversal of $H$, a contradiction. 

Since there is no separating $3$-cycle, the cycle $abca$ must bound a $3$-face. Without loss of generality, we may assume that $abca$ bounds the outer face $D$. 
\end{proof}

A vertex $v$ in $G$ is {\em internal} if $v$ is not incident with the outer face $D$. A subgraph in $G$ is {\em internal} if all its vertices are internal. 

\begin{lemma}[Theorem 4.4 in Lu \etal \cite{MR4357325}]\label{MINDEG}
Every internal vertex in $G$ has degree at least four. Then there are at most three $3^{-}$-vertices. 
\end{lemma}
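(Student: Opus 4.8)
The plan is to prove the first sentence — that every internal vertex has degree at least four — by a standard reducibility argument against the minimal counterexample, and then read off the second sentence immediately. By \cref{NS} the cycle $abca$ bounds the outer face $D$, so the only vertices incident with $D$ are $a$, $b$, and $c$, and every other vertex is internal. Hence once all internal vertices are known to have degree at least four, the only possible $3^{-}$-vertices are $a$, $b$, $c$, giving at most three of them.

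To handle the internal vertices, I would suppose for contradiction that some internal vertex $v$ has $d_G(v) \leq 3$. Since $v$ is internal, $v \notin \{a, b, c\}$, so $G - v$ is a strictly smaller plane graph that still lies in the (subgraph-closed) class of graphs avoiding the configurations in \cref{fig:FORBID} and still contains the $3$-cycle $abca$. Thus $(G - v, H - L_v, f)$ satisfies the hypotheses of \cref{C6}, and by the minimality of the counterexample $(G, H, f)$ it is not itself a counterexample; therefore $R_{0}$ extends to a strictly $f$-degenerate transversal $R'$ of $H - L_{v}$.

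It then remains to choose a color for $v$. For each neighbor $u$ of $v$ in $G$, the chosen vertex of $u$ in $R'$ is matched by $M_{uv}$ to at most one vertex of $L_v$; write $b_i$ for the number of neighbors of $v$ whose chosen vertices are matched to $(v, i)$. Then $\sum_{i} b_i \leq d_G(v) \leq 3$, whereas $\sum_{i} f(v, i) \geq 4$, so there is a color $i$ with $b_i < f(v, i)$. One checks that $R' \cup \{(v, i)\}$ is a strictly $f$-degenerate transversal: in any nonempty subgraph $\Gamma$ of $H[R' \cup \{(v, i)\}]$, if $(v,i) \notin V(\Gamma)$ then $\Gamma \subseteq H[R']$ inherits a vertex of degree below its $f$-value, while if $(v, i) \in V(\Gamma)$ then $d_{\Gamma}(v, i) \leq b_i < f(v, i)$, so $(v,i)$ is itself the required vertex. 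This extends $R_0$ to all of $H$, contradicting the choice of $(G, H, f)$.

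The argument is essentially routine, and I do not expect a genuine obstacle here; the only thing to watch is the degeneracy bookkeeping, namely verifying that a single vertex whose cover-degree stays strictly below its $f$-value may always be appended to a strictly $f$-degenerate transversal. The inequality $d_G(v) \leq 3 < 4 \leq \sum_i f(v,i)$ is exactly what forces a safe color to exist, and this is the one place where the hypothesis $\sum_i f(v,i) \geq 4$ enters.
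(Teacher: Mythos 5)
Your proof is correct. Note, however, that the paper does not prove this lemma at all: it is imported wholesale as Theorem 4.4 of Lu \etal \cite{MR4357325}, a general fact about minimal counterexamples to extension theorems for strictly $f$-degenerate transversals. Your argument is the elementary, self-contained one that underlies such a citation: delete an internal vertex $v$ with $d_G(v) \leq 3$ (legitimate since $v \notin \{a,b,c\}$, so $G - v$ keeps the triangle $abca$, stays planar, and still avoids the forbidden subgraphs), extend $R_0$ to a strictly $f$-degenerate transversal $R'$ of $H - L_v$ by minimality, and then append a vertex $(v,i)$ chosen by pigeonhole so that the number $b_i$ of chosen neighbours matched to $(v,i)$ satisfies $b_i < f(v,i)$; this exists because $\sum_i b_i \leq d_G(v) \leq 3 < 4 \leq \sum_i f(v,i)$. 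Your verification of strict $f$-degeneracy is the right one and is complete: a nonempty subgraph $\Gamma$ either avoids $(v,i)$ and hence lies in $H[R']$, or contains $(v,i)$, which then itself has degree at most $b_i < f(v,i)$ in $\Gamma$ (and in particular $f(v,i) \geq 1$, so no degenerate case arises). The deduction of the second sentence is also sound: by \cref{NS} the outer face $D$ is bounded by $abca$, so $a$, $b$, $c$ are the only non-internal vertices and hence the only candidates for $3^-$-vertices. The trade-off between the two routes is the usual one: your proof makes the paper self-contained at the cost of a half page of bookkeeping, while the paper's citation buys brevity and uniformity, since the theorem of Lu \etal packages this deletion-plus-greedy-color argument once and for all (including versions with larger precolored subgraphs, which is how it is also used for \cref{WW} and \cref{EXNN-RC}).
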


Let $G^{*}$ be an auxiliary graph with vertex-set $\{f : \text{$f$ is a $3$-face other than $D$}\}$, two vertices $f_{u}$ and $f_{v}$ in $G^{*}$ being adjacent if and only if the two $3$-faces are adjacent in $G$. A {\em cluster} in $G$ consists of some $3$-faces which corresponds to a connected component of $G^{*}$. By the definition of clusters, every $3$-face other than $D$ belongs to a unique cluster. 

By \cref{NS,MINDEG} and the hypothesis that $G$ has no chorded $6$-cycles, we can obtain the following result on clusters. 
\begin{lemma}
There are four types of clusters as depicted in \cref{Clusters}. 
\end{lemma}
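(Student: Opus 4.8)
The plan is to observe that the clusters are exactly the connected components of the auxiliary graph $G^{*}$, and to determine the possible components by combining two cheap structural facts with the absence of chorded $6$-cycles. First I would record that $\Delta(G^{*}) \le 3$: a $3$-face has only three edges, and each edge lies on exactly two faces, so a $3$-face is adjacent to at most three other $3$-faces. Next I would show that $G^{*}$ is triangle-free. Three pairwise-adjacent $3$-faces would share edges spanning only four vertices together with all six edges among them, hence would form a $K_{4}$; the fourth face of this $K_{4}$ is then either a separating $3$-cycle, contradicting \cref{NS}, or a face forcing its apex to be an internal vertex of degree $3$, contradicting \cref{MINDEG}. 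So no component of $G^{*}$ contains a triangle.

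With these facts in hand I would invoke the hypothesis that $G$ has no chorded $6$-cycle (equivalently, no subgraph isomorphic to the configurations in \cref{fig:FORBID}) to eliminate the larger shapes. A vertex of degree $3$ in $G^{*}$ is a central $3$-face meeting three others across its three edges; these four faces live on six vertices and carry a $6$-cycle through all six, in which the three edges of the central face are chords. This is forbidden, so $\Delta(G^{*}) \le 2$ and every component is a path or a cycle. For a path $T_{1}T_{2}T_{3}T_{4}$ on four faces, each consecutive triple shares a common vertex (the two shared edges of the middle face meet there), so the four faces again span six vertices and yield a chorded $6$-cycle — in the ``fan'' realization (all four faces about one vertex $v$) the outer $6$-cycle $v a_{1}\cdots a_{5}$ has chords $va_{2},va_{3},va_{4}$, and in the ``zigzag'' realization one still extracts such a cycle. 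Hence every path-component has at most three faces. Since $G^{*}$ is triangle-free, any cyclic component has length at least $4$; a cycle of length at least $5$ contains four consecutive faces forming exactly the forbidden path-configuration, so the only cyclic component is a $4$-cycle. Finally, tracking the four shared edges of a $4$-cycle in $G^{*}$ shows the four $3$-faces surround a common vertex, i.e. they form a wheel $W_{4}$; the only alternative makes two of the faces share an edge and collapses back to the excluded $K_{4}$.

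Putting this together, the components of $G^{*}$ are precisely $K_{1}$, $K_{2}$, $P_{3}$ and $C_{4}$, that is, a single $3$-face, two $3$-faces sharing an edge, three $3$-faces in a fan about a common vertex, and four $3$-faces forming a wheel $W_{4}$ — the four types in \cref{Clusters}.

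The step I expect to be most delicate is the verification that each eliminated shape (the claw from a degree-$3$ vertex, the four-face path in both its fan and zigzag forms, and the cycles of length at least $5$) genuinely contains a $6$-cycle together with a chord. The $6$-cycle and its chords are honest subgraph edges, so planarity cannot destroy them, but one must check that the six vertices involved are pairwise distinct; any coincidence among them would instead produce an extra adjacency in $G^{*}$ (a chord of the path or cycle) and must be routed back to the triangle-free/$K_{4}$ analysis rather than to the chorded-$6$-cycle argument. The companion point needing care is the identification of a $4$-cycle of $G^{*}$ as exactly a wheel, ruling out any other closed strip of four triangles.
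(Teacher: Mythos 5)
Your proposal is correct and is essentially the argument the paper intends: the paper states this lemma without a written proof, as an observation following from \cref{NS}, \cref{MINDEG} and the no-chorded-$6$-cycle hypothesis, and your auxiliary-graph analysis (degree bound on $G^{*}$, triangle-freeness via the $K_{4}$ exclusion, then elimination of degree-$3$ vertices, four-face paths and long cycles by exhibiting chorded $6$-cycles) fills in exactly those details, with your $K_{4}$ step coinciding with the paper's own proof of \cref{ST}\ref{NK4}. The only slight imprecision is the parenthetical claim that every vertex coincidence creates an extra adjacency in $G^{*}$ --- in the zigzag realization the coincidence of the two end vertices creates a $K_{4}$ without any new shared edge --- but since you explicitly route all coincidences back to the triangle-free/$K_{4}$ analysis, which does dispose of that case, the plan is sound.
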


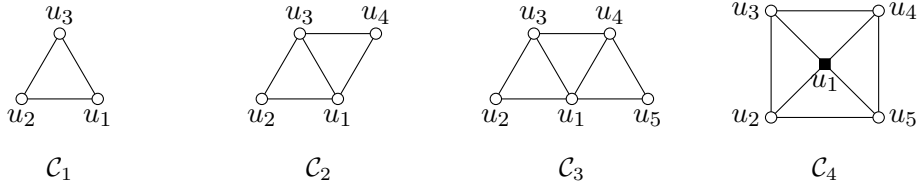
\begin{figure}
\captionsetup[subfigure]{labelformat=empty}
\centering
\subcaptionbox{$\mathcal{C}_{1}$}[0.21\linewidth]{
\begin{tikzpicture}
\def\s{1}
\coordinate (O) at (0, 0);
\coordinate (v1) at (0:\s);
\coordinate (v2) at (60:\s);
\coordinate (v3) at (120:\s);
\coordinate (v4) at (180:\s);
\draw (v3)node[above]{$u_{3}$}--(v4)node[below]{$u_{2}$}--(O)node[below]{$u_{1}$}--cycle;
\node[circle, inner sep = 1.5, fill = white, draw] () at (O) {};
\node[circle, inner sep = 1.5, fill = white, draw] () at (v3) {};
\node[circle, inner sep = 1.5, fill = white, draw] () at (v4) {};
\end{tikzpicture}}
\subcaptionbox{$\mathcal{C}_{2}$}[0.2\linewidth]{
\begin{tikzpicture}
\def\s{1}
\coordinate (O) at (0, 0);
\coordinate (v1) at (0:\s);
\coordinate (v2) at (60:\s);
\coordinate (v3) at (120:\s);
\coordinate (v4) at (180:\s);
\draw (v2)node[above]{$u_{4}$}--(v3)node[above]{$u_{3}$}--(v4)node[below]{$u_{2}$}--(O)node[below]{$u_{1}$}--cycle;
\draw (O)--(v3);
\node[circle, inner sep = 1.5, fill = white, draw] () at (O) {};
\node[circle, inner sep = 1.5, fill = white, draw] () at (v3) {};
\node[circle, inner sep = 1.5, fill = white, draw] () at (v2) {};
\node[circle, inner sep = 1.5, fill = white, draw] () at (v4) {};
\end{tikzpicture}}
\subcaptionbox{$\mathcal{C}_{3}$}[0.2\linewidth]{
\begin{tikzpicture}
\def\s{1}
\coordinate (O) at (0, 0);
\coordinate (v1) at (0:\s);
\coordinate (v2) at (60:\s);
\coordinate (v3) at (120:\s);
\coordinate (v4) at (180:\s);
\draw (v1)node[below]{$u_{5}$}--(v2)node[above]{$u_{4}$}--(v3)node[above]{$u_{3}$}--(v4)node[below]{$u_{2}$}--(O)node[below]{$u_{1}$}--cycle;
\draw (O)--(v2);
\draw (O)--(v3);
\node[circle, inner sep = 1.5, fill = white, draw] () at (O) {};
\node[circle, inner sep = 1.5, fill = white, draw] () at (v3) {};
\node[circle, inner sep = 1.5, fill = white, draw] () at (v1) {};
\node[circle, inner sep = 1.5, fill = white, draw] () at (v2) {};
\node[circle, inner sep = 1.5, fill = white, draw] () at (v4) {};
\end{tikzpicture}}
\subcaptionbox{$\mathcal{C}_{4}$}[0.2\linewidth]{
\begin{tikzpicture}
\def\s{1}
\coordinate (O) at (0, 0);
\coordinate (v1) at (45:\s);
\coordinate (v2) at (135:\s);
\coordinate (v3) at (225:\s);
\coordinate (v4) at (-45:\s);
\draw (v1)node[right]{$u_{4}$}--(v2)node[left]{$u_{3}$}--(v3)node[left]{$u_{2}$}--(v4)node[right]{$u_{5}$}--cycle;
\draw (v1)--(O)node[below]{$u_{1}$}--(v3);
\draw (v2)--(v4);
\node[rectangle, inner sep = 2, fill, draw] () at (O) {};
\node[circle, inner sep = 1.5, fill = white, draw] () at (v3) {};
\node[circle, inner sep = 1.5, fill = white, draw] () at (v1) {};
\node[circle, inner sep = 1.5, fill = white, draw] () at (v2) {};
\node[circle, inner sep = 1.5, fill = white, draw] () at (v4) {};
\end{tikzpicture}}
\caption{Clusters.}
\label{Clusters}
\end{figure}

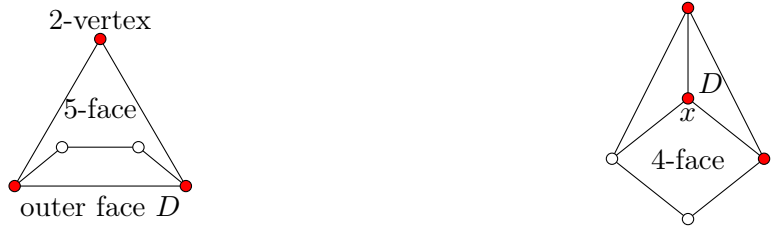
\begin{figure}
\centering
\subcaptionbox{\label{5-3}A $5$-face adjacent to a $3$-face}[0.45\linewidth]{
\begin{tikzpicture}
\def\s{1.3}
\coordinate (x5) at (90: \s);
\coordinate (x1) at (-30: \s);
\coordinate (x2) at (-15: 0.4*\s);
\coordinate (x3) at (195: 0.4*\s);
\coordinate (x4) at (210: \s);
\draw (x1)--(x2)--(x3)--(x4)--(x5)node[above]{2-vertex}--cycle;
\draw (x1)--(x4);
\node[circle, inner sep = 1.5, fill = red, draw] () at (x1) {};
\node[circle, inner sep = 1.5, fill = white, draw] () at (x2) {};
\node[circle, inner sep = 1.5, fill = white, draw] () at (x3) {};
\node[circle, inner sep = 1.5, fill = red, draw] () at (x4) {};
\node[circle, inner sep = 1.5, fill = red, draw] () at (x5) {};
\node () at (0, 0.3*\s) {$5$-face};
\node () at (0, -0.7*\s) {outer face $D$};
\end{tikzpicture}}
\subcaptionbox{\label{4-3-3}A $4$-face adjacent to two $3$-faces, $x$ is a $3$-vertex}[0.5\linewidth]{
\begin{tikzpicture}
\def\s{1}
\coordinate (x1) at (\s, 0);
\coordinate (x2) at (0, 0.8*\s);
\coordinate (x3) at (-\s, 0);
\coordinate (x4) at (0, -0.8*\s);
\coordinate (u) at (0, 2*\s);
\draw (x1)--(x2)node[below]{$x$}--(x3)--(x4)--cycle;
\draw (x1)--(u)--(x3);
\draw (x2)--(u);
\node[circle, inner sep = 1.5, fill = red, draw] () at (x1) {};
\node[circle, inner sep = 1.5, fill = red, draw] () at (x2) {};
\node[circle, inner sep = 1.5, fill = white, draw] () at (x3) {};
\node[circle, inner sep = 1.5, fill = white, draw] () at (x4) {};
\node[circle, inner sep = 1.5, fill = red, draw] () at (u) {};
\node () at (0, 0) {$4$-face};
\node () at ($(x2) + (0.3*\s, 0.2*\s)$) {$D$};
\end{tikzpicture}}
\caption{Certain adjacent faces.}
\end{figure}

When we refer to the clusters, we always use the labels as in \cref{Clusters}. The followings are some structural results on the graph. 
\begin{lemma}\label{ST}
The following statements are true.
\begin{enumerate}[label = (\roman*)]
\item\label{CHORD} If a $4$-cycle $u_{1}u_{2}u_{3}u_{4}u_{1}$ bounds a $4$-face, then the $4$-cycle has no chord. 
\item\label{AA} If a $3$-face is adjacent to a $4^{-}$-face, then they are normally adjacent. 
\item\label{NK4} There is no subgraph isomorphic to $K_{4}$. 
\item There are no adjacent $4$-faces. 
\item\label{N3FACE} Every internal $5^{+}$-vertex $v$ is incident with at most $\lfloor\frac{3d(v)}{4}\rfloor$ triangular faces. 
\item\label{3SIM5} If a $5$-face is adjacent to a $3$-face, then it can only be as depicted in \cref{5-3}. 
\item\label{TFT} If a $4$-face is adjacent to two $3$-faces, then it can only be as depicted in \cref{4-3-3}. Hence, a $4$-face is adjacent to at most two $3$-faces. 
\item\label{C4D} Each face adjacent to $\mathcal{C}_{4}$ is a $6^{+}$-face. 
\item\label{C3D} Each face adjacent to $\mathcal{C}_{3}$ is a $6^{+}$-face or $D$. Moreover, if a $\mathcal{C}_{3}$ is adjacent to $D$, then $u_{2}u_{5} \in E(G)$ and $u_{1}u_{2}u_{5}u_{1}$ bounds $D$. 
\item\label{C2D} Each face adjacent to $\mathcal{C}_{2}$ is a $6^{+}$-face or $D$. 
\item\label{DISTANCE1} For a cluster $\mathcal{C}_{4}$, if $P$ is a path linking $u_{2}$ and $u_{3}$ with $P \cap \{u_{1}, u_{4}, u_{5}\} = \emptyset$, then $|E(P)| \notin \{2, 3, 4\}$. 
\item\label{DISTANCE2} For a cluster $\mathcal{C}_{4}$, if $P$ is a path linking $u_{2}$ and $u_{4}$ with $P \cap \{u_{1}, u_{3}, u_{5}\} = \emptyset$, then $|E(P)| \notin \{1, 2, 3\}$. 
\item\label{C4S1} If a cluster $\mathcal{C}_{4}$ and a cluster $\mathcal{C}_{3}$ have a common $5$-vertex, then there is only one case, as shown in \cref{1CLUSTER}. 
\item\label{C4S2} If a cluster $\mathcal{C}_{4}$ is incident with two clusters $\mathcal{C}_{3}$ and the two common vertices are adjacent $5$-vertices in $\mathcal{C}_{4}$, then there is only one case, as shown in \cref{2CLUSTERS}.
\end{enumerate}
\end{lemma}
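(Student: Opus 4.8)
The plan is to prove the fourteen statements by contradiction: in each case I assume the asserted local structure fails and then derive one of three impossibilities, namely a forbidden subgraph (a chorded $6$-cycle, i.e.\ one of the first two configurations of \cref{fig:FORBID}, or the third configuration), a separating $3$-cycle (contradicting \cref{NS}), or, where a configuration can be deleted, a reducible one matching \cref{WW} or \cref{EXNN-RC} (contradicting the minimality of $(G, H, f)$). The recurring device is to trace a closed walk assembled from the boundaries of the faces under discussion, check that it has length $6$, and exhibit an edge already present in $G$ that chords it; since the two chorded $6$-cycles in \cref{fig:FORBID} realize both possible chord types of a $6$-cycle, any such walk is forbidden.

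For the structural items I would argue locally. In \ref{CHORD}, a chord of a $4$-face lies outside the face and cuts it into two triangles; with \cref{NS} and \cref{MINDEG} this yields a separating $3$-cycle or a $K_{4}$, and \ref{NK4} follows because a plane $K_{4}$ has a bounded triangular face, hence a separating $3$-cycle, unless $K_{4}$ is all of $G$, in which case its interior vertex has degree $3$, contradicting \cref{MINDEG}. Statement \ref{AA} holds because a non-normal adjacency between a $3$-face and a $4^{-}$-face forces a repeated edge or a separating triangle. For the no-adjacent-$4$-faces statement, two $4$-faces sharing an edge have combined boundary a $6$-cycle chorded by the shared edge (the second configuration of \cref{fig:FORBID}). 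The bound \ref{N3FACE} comes from the cyclic sequence of faces at an internal $5^{+}$-vertex $v$: four consecutive triangular faces would place $v$ with their four outer neighbors on a $6$-cycle chorded by the spokes at $v$, so no four consecutive incident faces are triangles, which gives at most $\lfloor 3d(v)/4 \rfloor$ of them. Items \ref{3SIM5} and \ref{TFT} are then finite checks of how a $3$-face may abut a $5$- or $4$-face once \ref{AA} and the no-adjacent-$4$-faces statement are in hand, the boundary configurations in \cref{5-3} and \cref{4-3-3} being exactly those in which the $6$-cycle one would otherwise build degenerates along the outer face $D$.

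The cluster items \ref{C4D}--\ref{C2D} exploit that a cluster is a maximal block of mutually adjacent $3$-faces: a $4$- or $5$-face abutting a cluster combines with the cluster triangles into a chorded $6$-cycle or a separating triangle, leaving only $6^{+}$-faces or $D$, and the boundary assertion in \ref{C3D} is read off by tracing the cluster against $abca$. The distance lemmas \ref{DISTANCE1} and \ref{DISTANCE2} are proved by concatenating the prescribed path $P$ with an internal path of $\mathcal{C}_{4}$ chosen so that the resulting cycle has length $6$; a side or spoke of $\mathcal{C}_{4}$ then chords it, while the shortest case in \ref{DISTANCE2} produces a $K_{4}$ and invokes \ref{NK4}. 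The listed forbidden values of $|E(P)|$ are precisely the lengths for which such a chorded $6$-cycle (or $K_{4}$) appears. Finally, \ref{C4S1} and \ref{C4S2} require enumerating the embedding-compatible ways a $\mathcal{C}_{4}$ can share a $5$-vertex, or two adjacent $5$-vertices, with one or two copies of $\mathcal{C}_{3}$; the distance lemmas eliminate most patterns and the third configuration of \cref{fig:FORBID} eliminates the remaining bad ones, leaving the single surviving case shown in the referenced figure.

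The main obstacle will be the exhaustive yet delicate analysis in \ref{C4S1} and \ref{C4S2}. One must rule out every embedding-compatible meeting of the two clusters while keeping the plane structure fixed, and in each rejected case verify that the $6$-cycle constructed is genuinely chorded rather than merely closed and that any triangle produced is genuinely separating; marshalling \ref{DISTANCE1}, \ref{DISTANCE2}, and the less familiar third configuration of \cref{fig:FORBID} at the right moments is where the real work lies, whereas items \ref{CHORD}--\ref{C2D} reduce to short local arguments.
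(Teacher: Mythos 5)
Your overall strategy mirrors the paper's: every item is settled by exhibiting a chorded $6$-cycle (one of the first two configurations of \cref{fig:FORBID}), a separating $3$-cycle against \cref{NS}, or a degree violation against \cref{MINDEG}, and the cluster-intersection items \ref{C4S1} and \ref{C4S2} are reduced to \ref{NK4}, \ref{C4D}, \ref{DISTANCE1} and \ref{DISTANCE2}. One harmless difference: for \ref{C4D}--\ref{C2D} the paper simply cites Lemma 2 of Huang and Qi, while you argue directly; the arguments are the routine chorded-$6$-cycle checks you describe.

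The genuine problem is the role you assign to the third configuration of \cref{fig:FORBID} in \ref{C4S1} and \ref{C4S2} --- you say the distance lemmas ``eliminate most patterns'' and this configuration ``eliminates the remaining bad ones,'' and you identify deploying it as where the real work lies. That step cannot be carried out. The third configuration is a $\mathcal{C}_{4}$ with three copies of $\mathcal{C}_{3}$ attached (seventeen vertices in all), whereas any intersection pattern arising in \ref{C4S1} or \ref{C4S2} lives inside the union of one $\mathcal{C}_{4}$ with at most two copies of $\mathcal{C}_{3}$, hence has at most fifteen vertices and can never contain that configuration as a subgraph. Forbidding it is therefore vacuous for these two items: it eliminates no intersection pattern whatsoever. (In the paper this configuration is used only later, in the discharging analysis, where \emph{three} special clusters $\mathcal{C}_{3}$ attach to one special $\mathcal{C}_{4}$.) The repair is that no extra tool is needed: as in the paper, \ref{C4D} and \ref{NK4} show that the two fan vertices adjacent to the shared $5$-vertex lie off $\mathcal{C}_{4}$, and then \ref{DISTANCE1} and \ref{DISTANCE2} already force the two clusters to share exactly one vertex; \ref{C4S2} then follows from \ref{C4S1} together with \ref{DISTANCE1}. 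Two smaller slips to fix when you write the details: in \ref{CHORD}, a chord of a $4$-face cannot create a $K_{4}$ (five edges on four vertices); the contradiction is that \cref{NS} forces both resulting triangles to bound faces, producing two $2$-vertices, at least one of which is internal, against \cref{MINDEG}. And in \ref{N3FACE}, four consecutive triangular faces at $v$ involve five, not four, outer neighbors --- which is exactly why the resulting cycle has length six.
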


\begin{figure}
\centering
\subcaptionbox{\label{1CLUSTER}}
{\begin{tikzpicture}[scale = 0.8]
\def\s{1}
\coordinate (E) at (\s, 0);
\coordinate (N) at (0, \s);
\coordinate (W) at (-\s, 0);
\coordinate (S) at (0, -\s);

\coordinate (EO) at (2*\s, 0);

\coordinate (E1) at ($(EO)!1!-60:(E)$);
\coordinate (E2) at ($(EO)!1!-120:(E)$);
\coordinate (E3) at ($(EO)!1!-180:(E)$);

\draw (E)--(N)--(W)--(S)--cycle;
\draw (E3)--(W);
\draw (S)--(N);

\draw (E)--(E1)node[above left = -1.5]{$x$}--(E2)--(E3);
\draw (E1)--(EO)node[below]{$y$}--(E2);

\foreach \ang in {1, 2, 3}
{
\node[circle, inner sep = 1.5, fill = white, draw] () at (E\ang) {};
}
\node[regular polygon, inner sep = 2, fill = blue, draw = blue] () at (E) {};
\node[circle, inner sep = 1.5, fill = white, draw] () at (N) {};
\node[circle, inner sep = 1.5, fill = white, draw] () at (W) {};
\node[circle, inner sep = 1.5, fill = white, draw] () at (S) {};
\node[circle, inner sep = 1.5, fill = white, draw] () at (EO) {};
\node[circle, inner sep = 1.5, fill = white, draw] () at (O) {};
\end{tikzpicture}}\hspace{1.5cm}
\subcaptionbox{\label{2CLUSTERS}}
{\begin{tikzpicture}[scale = 0.8]
\def\s{1}
\coordinate (E) at (\s, 0);
\coordinate (N) at (0, \s);
\coordinate (W) at (-\s, 0);
\coordinate (S) at (0, -\s);

\coordinate (EO) at (2*\s, 0);
\coordinate (NO) at (0, 2*\s);

\coordinate (E1) at ($(EO)!1!-60:(E)$);
\coordinate (E2) at ($(EO)!1!-120:(E)$);
\coordinate (E3) at ($(EO)!1!-180:(E)$);
\coordinate (N1) at ($(NO)!1!-60:(N)$);
\coordinate (N2) at ($(NO)!1!-120:(N)$);
\coordinate (N3) at ($(NO)!1!-180:(N)$);

\draw (E)--(N)--(W)--(S)--cycle;
\draw (E3)--(W);
\draw (S)--(N3);

\draw (E)--(E1)--(E2)--(E3);
\draw (E1)--(EO)--(E2);
\draw (N)--(N1)--(N2)--(N3);
\draw (N1)--(NO)--(N2);

\foreach \ang in {1, 2, 3}
{
\node[circle, inner sep = 1.5, fill = white, draw] () at (E\ang) {};
\node[circle, inner sep = 1.5, fill = white, draw] () at (N\ang) {};
}

\node[regular polygon, inner sep = 2, fill = blue, draw = blue] () at (E) {};
\node[regular polygon, inner sep = 2, fill = blue, draw = blue] () at (N) {};
\node[circle, inner sep = 1.5, fill = white, draw] () at (W) {};
\node[circle, inner sep = 1.5, fill = white, draw] () at (S) {};
\node[circle, inner sep = 1.5, fill = white, draw] () at (EO) {};
\node[circle, inner sep = 1.5, fill = white, draw] () at (NO) {};
\node[circle, inner sep = 1.5, fill = white, draw] () at (O) {};
\end{tikzpicture}}
\caption{A cluster $\mathcal{C}_{4}$ is incident with some clusters $\mathcal{C}_{3}$.}
\end{figure}
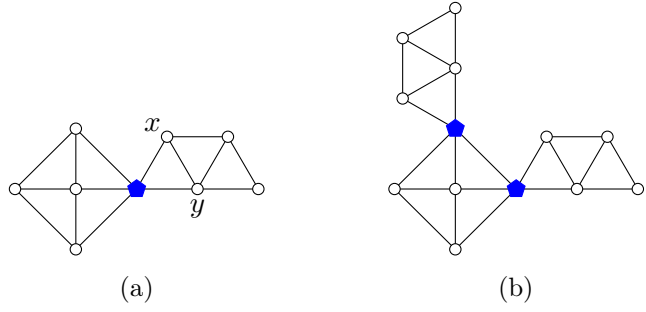

\begin{proof}
\begin{enumerate}[label = (\roman*)]
\item Suppose that $u_{1}$ and $u_{3}$ are adjacent. Since $u_{1}u_{2}u_{3}u_{1}$ is a 3-cycle, it must bound a $3$-face. Thus, $u_{2}$ is a $2$-vertex and $u_{1}u_{2}u_{3}u_{1}$ bounds the outer face. Note that $u_{1}u_{3}u_{4}u_{1}$ is also a $3$-cycle, it bounds a $3$-face, thus $u_{4}$ is a $2$-vertex, but $u_{4}$ should be an internal vertex and has degree at least four, a contradiction.
\item Observe that two adjacent $3$-faces must be normally adjacent. If a $3$-face is adjacent to a $4$-face, then they are normally adjacent because $4$-face has no chord.
\item Assume that there is a subgraph isomorphic to $K_{4}$. One can observe that there are four $3$-cycles in $K_{4}$, and every $3$-cycle bounds a $3$-face. Thus, $G = K_{4}$ which contradicts \cref{MINDEG}. 
\item Assume that a $4$-face $[v_{1}v_{2}v_{3}v_{4}]$ is adjacent to another $4$-face $[v_{1}v_{2}uw]$. By \cref{ST}\ref{CHORD}, we have that $u \neq v_{4}$. If $u = v_{3}$, then $v_{2}$ is a $2$-vertex which is incident with two $4$-faces, contradicts \cref{MINDEG}. Then $u \notin \{v_{3}, v_{4}\}$. By symmetry, we have that $w \notin \{v_{3}, v_{4}\}$. In this case, the two normally adjacent $4$-cycles form a chorded $6$-cycle, a contradiction. 
\item Since there is no chorded $6$-cycle, there is no four consecutive $3$-faces incident with a $5^{+}$-vertex. Then four consecutive faces contain at most three $3$-faces. It follows that every internal $5^{+}$-vertex $v$ is incident with at most $\lfloor\frac{3d(v)}{4}\rfloor$ triangular faces.
\item Assume that a $5$-face $[v_{1}v_{2}v_{3}v_{4}v_{5}]$ is adjacent to a $3$-face $[v_{1}v_{2}u]$. Since there is no chorded $6$-cycle, $u \in \{v_{3}, v_{4}, v_{5}\}$. By symmetry, we need to consider two cases: $u = v_{3}$ or $u = v_{4}$. Suppose that $u = v_{4}$. Then $v_{2}v_{3}v_{4}v_{2}$ bounds a $3$-face, and $v_{3}$ is a $2$-vertex. It follows that $v_{2}v_{3}v_{4}v_{2}$ bounds the outer face $D$, and $v_{5}$ is an internal $4^{+}$-vertex. But $v_{1}v_{4}v_{5}v_{1}$ is a separating $3$-cycle, contradicts \cref{NS}. Hence, $u = v_{3}$ and $v_{2}$ is a $2$-vertex which is incident with the outer $3$-face $D$, see \cref{5-3}. 

\item Let $f = [v_{1}v_{2}v_{3}v_{4}]$ be a $4$-face. Suppose that $f$ is adjacent to two $3$-faces $[v_{1}v_{2}u]$ and $[v_{3}v_{4}w]$. Since $[v_{1}v_{2}v_{3}v_{4}]$ has no chord, we have that $u, w \notin \{v_{1}, v_{2}, v_{3}, v_{4}\}$. Note that $u = w$, for otherwise $v_{1}uv_{2}v_{3}wv_{4}v_{1}$ is a chorded $6$-cycle. By \cref{NS}, every $3$-cycle bounds a $3$-face, thus $u$ is incident with four $3$-faces. Observe that $G$ is a wheel on five vertices, and it has four $3$-vertices, contradicts \cref{MINDEG}. 

Suppose that $f$ is adjacent to two $3$-faces $[v_{1}v_{2}u]$ and $[v_{2}v_{3}z]$. Similarly, we can obtain that $u, z \notin \{v_{1}, v_{2}, v_{3}, v_{4}\}$ and $u = z$. By \cref{NS}, each of $[v_{1}v_{2}u]$ and $[v_{2}v_{3}u]$ bounds a $3$-face. Then $v_{2}$ is a $3$-vertex which is incident with the outer $3$-face $D$, see \cref{4-3-3}. 

By the above discussion, only two consecutive edges can be incident with $3$-faces. Then a $4$-face is adjacent to at most two $3$-faces.

\item Follows from Lemma 2(4) in \cite{MR4374026}. 
\item Follows from Lemma 2(5) in \cite{MR4374026}. 
\item Follows from Lemma 2(5) in \cite{MR4374026}. 
\item Suppose that $|E(P)| \in \{2, 3, 4\}$. Then there is a chorded $6$-cycle, a contradiction. 
\item Suppose that $|E(P)| = 1$. Then $u_{2}u_{4} \in E(G)$ and there is a subgraph isomorphic to $K_{4}$, contradicting \cref{ST}\ref{NK4}. When $|E(P)| \in \{2, 3\}$, there is a chorded $6$-cycle, a contradiction. 
\item By \cref{ST}\ref{C4D} and \ref{NK4}, neither $x$ nor $y$ is on the cluster $\mathcal{C}_{4}$. By \cref{ST}\ref{DISTANCE1} and \ref{DISTANCE2}, $\mathcal{C}_{4}$ and $\mathcal{C}_{3}$ can share exactly one vertex, see \cref{1CLUSTER}. 
\item One can observe that $\mathcal{C}_{4}$ and each cluster $\mathcal{C}_{3}$ share exactly one vertex by \cref{ST}\ref{C4S1}. It suffices to show that the two clusters $\mathcal{C}_{3}$ do not share any vertex. This immediately follows from \cref{ST}\ref{DISTANCE1}. \qedhere
\end{enumerate} 
\end{proof}

The {\em diamond graph} is the simple graph on four vertices and five edges. A subgraph is {\em $4$-regular} if all its vertices have degree four in $G$. 
\begin{lemma}[Theorem 4.5 in Lu et al. \cite{MR4357325}]\label{K4-}
There are no internal $4$-regular diamonds in $G$. 
\end{lemma}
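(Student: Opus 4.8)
The plan is to show that an internal $4$-regular diamond would constitute a reducible configuration, contradicting the minimality of the counterexample $(G,H,f)$. Suppose to the contrary that $G$ contains such a diamond $K$, and label its vertices $u_{1},u_{2},u_{3},u_{4}$ so that the edge set of $K$ is $\{u_{1}u_{2},u_{2}u_{3},u_{3}u_{4},u_{4}u_{1},u_{1}u_{3}\}$; here each $u_{i}$ is internal with $d_{G}(u_{i})=4$. First I would check that $K$ is an induced subgraph: the only edge that could be added among its vertices is $u_{2}u_{4}$, and its presence would produce a copy of $K_{4}$, which is forbidden by \cref{ST}\ref{NK4}. Consequently $d_{K}(u_{1})=d_{K}(u_{3})=3$ and $d_{K}(u_{2})=d_{K}(u_{4})=2$, so $u_{1}$ and $u_{3}$ each have exactly one neighbour outside $K$, while $u_{2}$ and $u_{4}$ each have exactly two.

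I would then apply \cref{WW} with $k=4$ and the vertex ordering $(v_{1},v_{2},v_{3},v_{4})=(u_{1},u_{4},u_{3},u_{2})$. Condition~\ref{WW-2} holds because $d_{G}(u_{2})=4\le 4$ and $u_{1}u_{2}\in E(G)$. For condition~\ref{WW-1}, the external degrees give $d_{G}(v_{1})-d_{K}(v_{1})=1<2=d_{G}(v_{4})-d_{K}(v_{4})$, that is $4-1>4-2$. The real content is condition~\ref{WW-3}: for $v_{2}=u_{4}$, deleting $\{v_{3},v_{4}\}=\{u_{3},u_{2}\}$ leaves $u_{4}$ adjacent only to $u_{1}$ and its two external neighbours, hence to at most three vertices; for $v_{3}=u_{3}$, deleting $\{v_{4}\}=\{u_{2}\}$ leaves $u_{3}$ adjacent only to $u_{1},u_{4}$ and its single external neighbour, again at most three. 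Thus both interior vertices of the ordering have at most $k-1=3$ neighbours in the prescribed deletions, and all hypotheses of \cref{WW} are satisfied.

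To conclude, I would invoke minimality. Since $K$ is internal while $a,b,c$ lie on the outer face $D$ by \cref{NS}, we have $\{a,b,c\}\cap V(K)=\emptyset$; hence $G-V(K)$ is a strictly smaller graph in the same (subgraph-closed) class that still contains the triangle $abca$. By the minimality of $(G,H,f)$, the transversal $R_{0}$ extends to a strictly $f$-degenerate transversal of $H-\bigcup_{i=1}^{4}L_{u_{i}}$, and \cref{WW} then extends this to a strictly $f$-degenerate transversal of all of $H$, contradicting the assumption that $(G,H,f)$ is a counterexample.

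The only genuine computation is the verification of condition~\ref{WW-3}, and the whole argument rests on the absence of $K_{4}$ furnished by \cref{ST}\ref{NK4}: this both makes $K$ induced and pins the external-neighbour counts at the values forced by $4$-regularity, after which the diamond slots directly into the paired-vertex mechanism of \cref{WW} (with $u_{1}$ paired to $u_{2}$). I therefore expect no obstacle beyond this bookkeeping.
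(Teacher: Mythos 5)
Your proof is correct, but it takes a genuinely different route from the paper: the paper never proves this lemma at all---it imports it verbatim as Theorem 4.5 of Lu et al.\ \cite{MR4357325}, just as it imports \cref{MINDEG} (Theorem 4.4 there). What you have done is reconstruct a self-contained proof from ingredients the paper does state, namely minimality of the counterexample plus the paired-vertex extension theorem \cref{WW}. Your verification checks out: the absence of $K_{4}$ (\cref{ST}\ref{NK4}) makes the diamond $K$ induced and fixes the external degrees (one external neighbour for each $3$-valent vertex of $K$, two for each $2$-valent one); the ordering $(u_{1},u_{4},u_{3},u_{2})$ then satisfies \ref{WW-1} (since $4-1>4-2$), \ref{WW-2} (since $d_{G}(u_{2})=4$ and $u_{1}u_{2}\in E(G)$) and \ref{WW-3} (each of $u_{4}$ and $u_{3}$ retains exactly three neighbours after the prescribed deletions); and internality of $K$ together with \cref{NS} gives $V(K)\cap\{a,b,c\}=\emptyset$, so $G-V(K)$ still contains the triangle $abca$, the hypotheses of \cref{C6} persist under vertex deletion, and minimality supplies the transversal of $H-\bigcup_{v\in V(K)}L_{v}$ that \cref{WW} then extends to $H$, a contradiction. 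The trade-off is clear: the paper's one-line citation is shorter but leaves the reader to consult Lu et al.; your argument buys self-containment using only \cref{WW}, which the paper already quotes, at the modest cost of the bookkeeping you carried out.
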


\begin{lemma}\label{ADJ}
For the internal configurations in \cref{fig:subfig:RC-6-b} and \cref{fig:subfig:RC-7-a}, we may always assume that $|N(w_{11}) \cap \{w_{1}, w_{2}, \dots, w_{10}\}| = 2$. 
\end{lemma}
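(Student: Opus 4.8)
The plan is to determine the number of neighbours that $w_{11}$ has on the core $\{w_{1}, \dots, w_{10}\}$ by pinning it between a trivial lower bound and a structural upper bound. Since the configurations in \cref{fig:subfig:RC-6-b} and \cref{fig:subfig:RC-7-a} occur only as subgraphs (not necessarily induced), $w_{11}$ is already joined by two edges of the configuration to two prescribed core vertices, say $w_{i}$ and $w_{j}$; hence $|N(w_{11}) \cap \{w_{1}, \dots, w_{10}\}| \geq 2$ holds automatically. The whole content of the lemma is therefore that there is no \emph{extra} edge from $w_{11}$ into the core, i.e.\ that the subgraph is induced at $w_{11}$ as far as the core is concerned. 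So it suffices to rule out a third neighbour $w_{k}$ with $k \notin \{i, j\}$.

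To get the upper bound I would argue by contradiction and suppose such a $w_{k}$ exists. Each of the two cores is built from a $6$-cycle carrying a chord together with attached $3$- and $4$-faces, so it already contains several short paths joining $w_{k}$ to $\{w_{i}, w_{j}\}$. Closing any such path through the two known edges $w_{11}w_{i}$, $w_{11}w_{j}$ and the hypothetical edge $w_{11}w_{k}$ produces one of three things: a $6$-cycle with a chord, a triangle $w_{11}w_{i}w_{j}$ stacked on an existing triangle (yielding a copy of $K_{4}$ or an internal $4$-regular diamond), or a separating $3$-cycle. The first is excluded by the standing hypothesis that $G$ has no chorded $6$-cycle, the second by \cref{ST}\ref{NK4} and \cref{K4-}, and the third by \cref{NS}; the degree information needed to recognise the diamond as internal and $4$-regular comes from \cref{MINDEG}. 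Running through every admissible $w_{k}$ in each configuration thus forces $|N(w_{11}) \cap \{w_{1}, \dots, w_{10}\}| \leq 2$, and with the lower bound this gives equality.

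The main obstacle will be the case analysis for the upper bound, since the list of candidate third neighbours $w_{k}$ and the corresponding forbidden substructure both depend on the exact adjacencies in \cref{fig:subfig:RC-6-b} and \cref{fig:subfig:RC-7-a}. I would organise it by first listing, separately for each configuration, all the $6$-cycles already present in the core, and then checking which single added edge $w_{11}w_{k}$ turns one of them into a chorded $6$-cycle; the two configurations must be treated independently because the chord of their underlying $6$-cycle sits in different positions, so the relevant $6$-cycles through $w_{11}$ differ. The only delicate point beyond this bookkeeping is to confirm, in the cases where the new triangle does not immediately close a $K_{4}$, that the resulting diamond is genuinely internal and $4$-regular so that \cref{K4-} is applicable, which is precisely where \cref{MINDEG} enters.
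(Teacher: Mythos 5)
Your reading of the lemma is off in a way that matters: ``we may always assume'' is not a rhetorical way of stating an absolute fact, it is a genuine without-loss-of-generality claim, and the proof depends on that. In \cref{fig:subfig:RC-6-b} the vertex $w_{11}$ is one of the two symmetric outer vertices of the pair of triangles hanging at $w_{6}$ (the other being $w_{2}$; the configuration has an automorphism swapping them), and your plan is to show outright that no third edge from $w_{11}$ into $\{w_{1},\dots,w_{10}\}$ can exist. That is exactly where the argument breaks: for the candidate $w_{k}=w_{10}$ no forbidden substructure arises. In the configuration $w_{11}$ is adjacent to $w_{1},w_{6}$, while $w_{10}$ is adjacent to $w_{3}$, $w_{9}$ and the unlabelled white vertex; they have no common neighbour, so a hypothetical edge $w_{11}w_{10}$ creates no new triangle (hence no $K_{4}$, no diamond, no separating $3$-cycle), and every $6$-cycle through $w_{11}w_{10}$ that the configuration supplies (e.g.\ $w_{11}w_{10}w_{9}w_{8}w_{7}w_{6}w_{11}$ or $w_{11}w_{10}w_{3}w_{8}w_{7}w_{6}w_{11}$) is chordless there --- note that $w_{6}w_{8}\notin E(G)$ is a hypothesis of the configuration, not an available chord. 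So no contradiction can be extracted from $w_{11}w_{10}\in E(G)$, and none should be: this adjacency is genuinely possible; it merely means the labels $w_{2}$ and $w_{11}$ should be exchanged.

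The paper's proof supplies precisely the ingredient you are missing. Since $w_{10}$ is a $4$-vertex of $G$ and already has three neighbours inside the configuration, it is adjacent to at most one of $w_{2},w_{11}$; by the $w_{2}\leftrightarrow w_{11}$ symmetry one may relabel so that $w_{10}w_{11}\notin E(G)$. Only after this normalisation does an argument of your type (chorded $6$-cycles and $K_{4}$'s suffice; the diamond and separating-cycle lemmas are not needed) eliminate the remaining candidates $w_{2},\dots,w_{9}$ and give $N(w_{11})\cap\{w_{1},\dots,w_{10}\}=\{w_{1},w_{6}\}$. The same scheme, applied to the degree-$4$ vertex $w_{8}$ in \cref{fig:subfig:RC-7-a}, handles the other configuration. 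Without the degree-plus-symmetry step your case analysis cannot close, so the proposal has a genuine gap.
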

\begin{proof}
Consider the configuration \cref{fig:subfig:RC-6-b}. Since $w_{10}$ has degree $4$ in $G$, we have that either $w_{2}w_{10} \notin E(G)$ or $w_{11}w_{10} \notin E(G)$. By symmetry, we may assume that $w_{10}$ and $w_{11}$ are nonadjacent. Since $G$ has no chorded $6$-cycle and has no subgraph isomorphic to $K_{4}$, it is easy to check that $N(w_{11}) \cap \{w_{1}, w_{2}, \dots, w_{10}\} = \{w_{1}, w_{6}\}$.

Similar arguments can be applied to configuration \cref{fig:subfig:RC-7-a}. 
\end{proof}

We give the following reducible configurations. 

\begin{lemma}\label{R-C}
Every internal configuration in \cref{RC-1}--\cref{RC-7} is reducible. 
\end{lemma}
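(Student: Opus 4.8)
The plan is to exploit the fact that the genuinely hard combinatorial work has already been packaged into \cref{EXNN-RC}: that theorem says precisely that an occurrence of any configuration in \cref{RC-1}--\cref{RC-7} (with its caption requirements) can absorb a partial strictly $f$-degenerate transversal of the rest of the cover. So the proof of reducibility is a two-step extension argument. Suppose, for contradiction, that $G$ contains an \emph{internal} subgraph $K$ isomorphic to one of these configurations. I would first colour everything outside $K$ by minimality of the counterexample, and then invoke \cref{EXNN-RC} to push that colouring across $V(K)$, obtaining a strictly $f$-degenerate transversal of all of $H$ extending $R_{0}$ — contradicting that $(G,H,f)$ is a counterexample.

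Setting up the reduction precisely: since $K$ is internal and $a,b,c$ lie on the outer triangle $D$ by \cref{NS}, we have $\{a,b,c\} \cap V(K) = \emptyset$, so $L_{a}\cup L_{b}\cup L_{c} \subseteq V\!\left(H - \bigcup_{v\in V(K)} L_{v}\right)$ and $R_{0}$ remains a strictly $f$-degenerate transversal of $H[L_{a}\cup L_{b}\cup L_{c}]$. The graph $G - V(K)$ is planar, contains no chorded $6$-cycle and no subgraph from \cref{fig:FORBID} (deleting vertices never creates a new subgraph), still contains the triangle $abca$, and has strictly fewer vertices than $G$. By minimality it is not a counterexample to \cref{C6}, so $R_{0}$ extends to a strictly $f$-degenerate transversal $R'$ of $H - \bigcup_{v\in V(K)} L_{v}$. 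Since $G$ lies in the hereditary class $\mathscr{G}$ of planar graphs avoiding the configurations in \cref{fig:FORBID}, which is closed under vertex deletion, \cref{EXNN-RC} applies to $G$ with the subgraph $K$ and extends $R'$ to a strictly $f$-degenerate transversal of $H$. This extends $R_{0}$, the desired contradiction; hence $G$ contains no such internal configuration, which is exactly the assertion that each configuration is reducible.

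The substance of the proof is therefore the configuration-by-configuration check that an internal copy of \cref{RC-1}--\cref{RC-7} really does meet the caption hypotheses of \cref{EXNN-RC}. The degree conditions are supplied by \cref{MINDEG}: every internal vertex has degree at least four, so a vertex that a configuration prescribes as a $4$-vertex is forced to have exactly that degree, and no configuration vertex can secretly be a $3^{-}$-vertex. The prescribed (non-)adjacencies are forced by the structural restrictions already in hand: \cref{ST}\ref{NK4} forbids a $K_{4}$ among configuration vertices, the absence of chorded $6$-cycles eliminates the short chords and connections that would otherwise make the embedding ambiguous, and \cref{NS} ensures every $3$-cycle bounds a face. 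Thus each internal copy is, as an induced structure, exactly the drawn configuration.

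The main obstacle is the two configurations \cref{fig:subfig:RC-6-b} and \cref{fig:subfig:RC-7-a}, where the drawing alone does not determine the embedding in $G$: a priori the vertex $w_{11}$ could attach to several of $w_{1},\dots,w_{10}$. Here I would invoke \cref{ADJ}, which lets us assume $|N(w_{11}) \cap \{w_{1},\dots,w_{10}\}| = 2$ with the two neighbours placed as required by \cref{EXNN-RC}; after this normalisation the internal copy matches the configuration exactly and the two-step extension above goes through verbatim. Apart from this normalisation, each of the seven cases is a routine confrontation of the embedded copy with its caption requirements, so no genuinely new idea beyond \cref{EXNN-RC}, \cref{MINDEG}, \cref{ST}, and \cref{ADJ} is needed.
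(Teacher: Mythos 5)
Your reduction skeleton is sound and is exactly what the paper means by ``reducible'': since $K$ is internal, $\{a,b,c\}\cap V(K)=\emptyset$, so minimality colours $H-\bigcup_{v\in V(K)}L_{v}$ while preserving $R_{0}$, and \cref{EXNN-RC} pushes the colouring across $K$. Your treatment of \cref{RC-1}, \cref{RC-2}, \cref{RC-3} and \cref{fig:subfig:RC-6-a} (caption conditions forced by no chorded $6$-cycles and no $K_{4}$), and of \cref{fig:subfig:RC-6-b} and \cref{fig:subfig:RC-7-a} via \cref{ADJ}, also matches the paper.

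The gap is configuration \cref{fig:subfig:RC-7-b}. You claim that apart from the \cref{ADJ} normalisation ``each of the seven cases is a routine confrontation\dots with its caption requirements,'' but \cref{ADJ} covers only \cref{fig:subfig:RC-6-b} and \cref{fig:subfig:RC-7-a}, and its proof does not transfer to \cref{fig:subfig:RC-7-b}: the symmetry it exploits is available because in those two configurations one end is a plain fan, whereas in \cref{fig:subfig:RC-7-b} both ends carry the diamond-type attachment. Moreover, the caption requirement $|N(w_{11})\cap\{w_{1},\dots,w_{10}\}|=2$ can genuinely fail there, and neither hypothesis you invoke rules this out: adding the edge $w_{11}w_{2}$ closes the $6$-cycle $w_{11}w_{1}w_{5}w_{6}w_{7}w_{2}w_{11}$, which has no chord among the configuration's edges, and creates no $K_{4}$; the same holds for $w_{11}w_{9}$. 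This is precisely why the paper splits \cref{fig:subfig:RC-7-b} into three cases: when the caption condition holds it applies \cref{EXNN-RC} with pairs $(w_{1},w_{11}),(w_{2},w_{9}),(w_{3},w_{7})$; when $w_{11}w_{2}\in E(G)$ it exhibits the ordering $w_{2},w_{9},w_{8},w_{7},w_{3},w_{4},w_{6},w_{5},w_{10},w_{1},w_{11}$ with the single pair $(w_{2},w_{11})$ (turning the offending edge into the adjacency $v_{1}v_{m}$ required by the \cref{WW}-type mechanism); and when $w_{11}w_{9}\in E(G)$ it uses the ordering $w_{6},w_{4},w_{5},w_{11},w_{2},w_{7},w_{8},w_{9},w_{10},w_{1},w_{3}$ with pairs $(w_{6},w_{3}),(w_{11},w_{1}),(w_{2},w_{9})$. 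This explicit case analysis is the one non-routine step of the whole lemma, and it is exactly the step your proposal omits.
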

\begin{proof}
Since $G$ has no chorded $6$-cycle and has no subgraph isomorphic to $K_{4}$, it is easy to check that all the conditions in the captions of \cref{RC-1}, \cref{RC-2}, \cref{RC-3}, and \cref{fig:subfig:RC-6-a} are satisfied. By \cref{EXNN-RC}, they are reducible. 

By \cref{ADJ}, for the internal configurations in \cref{fig:subfig:RC-6-b} and \cref{fig:subfig:RC-7-a}, we may always assume that $|N(w_{11}) \cap \{w_{1}, w_{2}, \dots, w_{10}\}| = 2$. All the other conditions in the captions are also satisfied. By \cref{EXNN-RC}, we have \cref{fig:subfig:RC-6-b} and \cref{fig:subfig:RC-7-a} are reducible. 

Consider internal configuration \cref{fig:subfig:RC-7-b}. If $|N(w_{11}) \cap \{w_{1}, w_{2}, \dots, w_{10}\}| = 2$, then 
\[
w_{1}, w_{2}, \dots, w_{11}
\]
satisfies the condition of \cref{EXNN-RC} with pairs $(w_{1}, w_{11}), (w_{2}, w_{9}), (w_{3}, w_{7})$. If $w_{11}w_{2} \in E(G)$, then 
\[
w_{2}, w_{9}, w_{8}, w_{7}, w_{3}, w_{4}, w_{6}, w_{5}, w_{10}, w_{1}, w_{11}
\]
satisfies the condition of \cref{EXNN-RC} with pair $(w_{2}, w_{11})$. If $w_{11}w_{9} \in E(G)$, then 
\[
w_{6}, w_{4}, w_{5}, w_{11}, w_{2}, w_{7}, w_{8}, w_{9}, w_{10}, w_{1}, w_{3}
\]
satisfies the condition of \cref{EXNN-RC} with pairs $(w_{6}, w_{3}), (w_{11}, w_{1}), (w_{2}, w_{9})$. 
\end{proof}

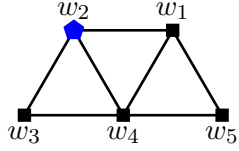
\begin{figure}
\centering
\begin{tikzpicture}[line width = 1pt]
\def\s{1.3}
\coordinate (O) at (0, 0);
\coordinate (v1) at (0:\s);
\coordinate (v2) at (60:\s);
\coordinate (v3) at (120:\s);
\coordinate (v4) at (180:\s);
\draw (v1)node[below]{$w_{5}$}--(v2)node[above]{$w_{1}$}--(v3)node[above]{$w_{2}$}--(v4)node[below]{$w_{3}$}--(O)node[below]{$w_{4}$}--cycle;
\draw (O)--(v2);
\draw (O)--(v3);
\node[rectangle, inner sep = 2, fill, draw] () at (O) {};
\node[regular polygon, inner sep = 2, fill = blue, draw = blue] () at (v3) {};
\node[rectangle, inner sep = 2, fill, draw] () at (v1) {};
\node[rectangle, inner sep = 2, fill, draw] () at (v2) {};
\node[rectangle, inner sep = 2, fill, draw] () at (v4) {};
\end{tikzpicture}
\caption{$w_{2}w_{5}, w_{3}w_{5} \notin E(G)$}
\label{RC-1}
\end{figure}

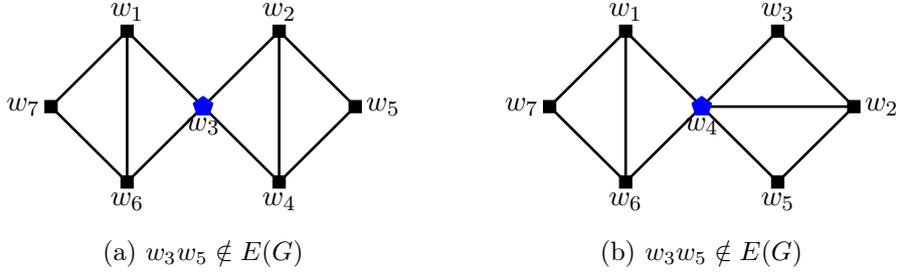
\begin{figure}
\centering
\subcaptionbox{\label{fig:subfig:RC-2-a}$w_{3}w_{5} \notin E(G)$}{\begin{tikzpicture}[line width = 1pt]
\def\s{1}
\coordinate (O) at (0, 0);
\coordinate (v1) at (45: 1.414*\s);
\coordinate (v2) at (135: 1.414*\s);
\coordinate (v3) at (225: 1.414*\s);
\coordinate (v4) at (-45: 1.414*\s);
\coordinate (A) at (-2*\s, 0);
\coordinate (B) at (2*\s, 0);
\draw (v1)node[above]{$w_{2}$}--(O)node[below]{$w_{3}$}--(v3)node[below]{$w_{6}$}--(A)node[left]{$w_{7}$}--(v2)node[above]{$w_{1}$}--(v4)node[below]{$w_{4}$}--(B)node[right]{$w_{5}$}--cycle;
\draw (v2)--(v3);
\draw (v1)--(v4);
\node[rectangle, inner sep = 2, fill, draw] () at (v1) {};
\node[rectangle, inner sep = 2, fill, draw] () at (v2) {};
\node[rectangle, inner sep = 2, fill, draw] () at (v3) {};
\node[rectangle, inner sep = 2, fill, draw] () at (v4) {};
\node[rectangle, inner sep = 2, fill, draw] () at (A) {};
\node[rectangle, inner sep = 2, fill, draw] () at (B) {};
\node[regular polygon, inner sep = 2, fill = blue, draw = blue] () at (O) {};
\end{tikzpicture}}\hspace{1cm}
\subcaptionbox{\label{fig:subfig:RC-2-b}$w_{3}w_{5} \notin E(G)$}{\begin{tikzpicture}[line width = 1pt]
\def\s{1}
\coordinate (O) at (0, 0);
\coordinate (v1) at (45: 1.414*\s);
\coordinate (v2) at (135: 1.414*\s);
\coordinate (v3) at (225: 1.414*\s);
\coordinate (v4) at (-45: 1.414*\s);
\coordinate (A) at (-2*\s, 0);
\coordinate (B) at (2*\s, 0);
\draw (v1)node[above]{$w_{3}$}--(O)node[below]{$w_{4}$}--(v3)node[below]{$w_{6}$}--(A)node[left]{$w_{7}$}--(v2)node[above]{$w_{1}$}--(v4)node[below]{$w_{5}$}--(B)node[right]{$w_{2}$}--cycle;
\draw (v2)--(v3);
\draw (O)--(B);
\node[rectangle, inner sep = 2, fill, draw] () at (v1) {};
\node[rectangle, inner sep = 2, fill, draw] () at (v2) {};
\node[rectangle, inner sep = 2, fill, draw] () at (v3) {};
\node[rectangle, inner sep = 2, fill, draw] () at (v4) {};
\node[rectangle, inner sep = 2, fill, draw] () at (A) {};
\node[rectangle, inner sep = 2, fill, draw] () at (B) {};
\node[regular polygon, inner sep = 2, fill = blue, draw = blue] () at (O) {};
\end{tikzpicture}}
\caption{Note that $|N(w_{7}) \cap \{w_{1}, \dots, w_{6}\}| = 2$.}
\label{RC-2}
\end{figure}

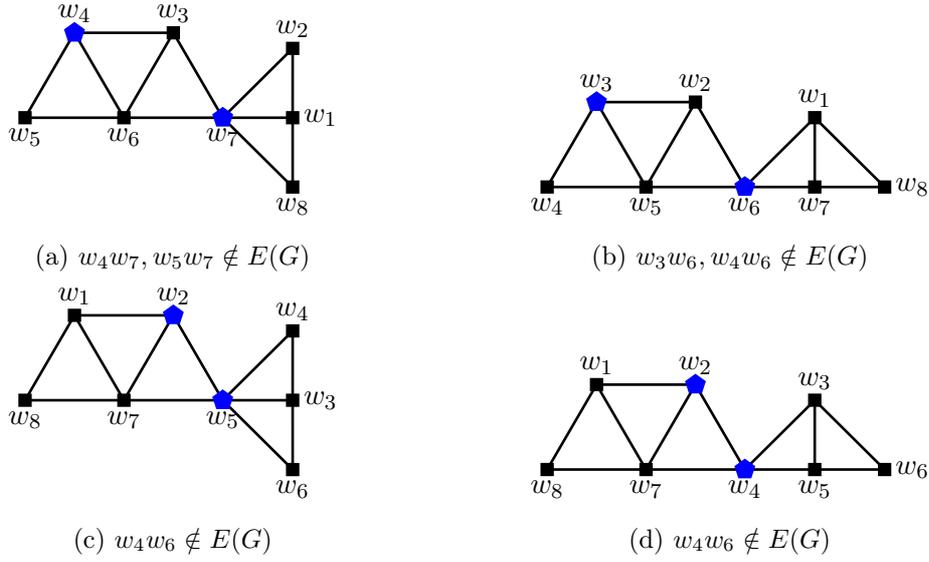
\begin{figure}
\centering
\subcaptionbox{\label{fig:subfig:RC-3-a}$w_{4}w_{7}, w_{5}w_{7} \notin E(G)$}[0.45\linewidth]{
\begin{tikzpicture}[line width = 1pt]
\def\s{1.3}
\coordinate (O) at (0, 0);
\coordinate (v1) at (0:\s);
\coordinate (v2) at (60:\s);
\coordinate (v3) at (120:\s);
\coordinate (v4) at (180:\s);
\coordinate (E1) at ($(45:\s) + (v1)$);
\coordinate (E2) at ($(0:0.707*\s) + (v1)$);
\coordinate (E3) at ($(-45:\s) + (v1)$);
\draw (v1)node[below]{$w_{7}$}--(v2)node[above]{$w_{3}$}--(v3)node[above]{$w_{4}$}--(v4)node[below]{$w_{5}$}--(O)node[below]{$w_{6}$}--cycle;
\draw (O)--(v2);
\draw (O)--(v3);
\draw (E1)node[above]{$w_{2}$}--(E2)node[right]{$w_{1}$}--(E3)node[below]{$w_{8}$}--(v1)--cycle;
\draw (v1)--(E2);
\node[rectangle, inner sep = 2, fill, draw] () at (O) {};
\node[regular polygon, inner sep = 2, fill = blue, draw = blue] () at (v3) {};
\node[regular polygon, inner sep = 2, fill = blue, draw = blue] () at (v1) {};
\node[rectangle, inner sep = 2, fill, draw] () at (v2) {};
\node[rectangle, inner sep = 2, fill, draw] () at (v4) {};
\node[rectangle, inner sep = 2, fill, draw] () at (E1) {};
\node[rectangle, inner sep = 2, fill, draw] () at (E2) {};
\node[rectangle, inner sep = 2, fill, draw] () at (E3) {};
\end{tikzpicture}}
\subcaptionbox{\label{fig:subfig:RC-3-b}$w_{3}w_{6}, w_{4}w_{6} \notin E(G)$}[0.45\linewidth]{
\begin{tikzpicture}[line width = 1pt]
\def\s{1.3}
\coordinate (O) at (0, 0);
\coordinate (v1) at (0:\s);
\coordinate (v2) at (60:\s);
\coordinate (v3) at (120:\s);
\coordinate (v4) at (180:\s);
\coordinate (E1) at ($(45:\s) + (v1)$);
\coordinate (E2) at ($(0:1.414*\s) + (v1)$);
\coordinate (E3) at ($(0:0.707*\s) + (v1)$);
\draw (v1)node[below]{$w_{6}$}--(v2)node[above]{$w_{2}$}--(v3)node[above]{$w_{3}$}--(v4)node[below]{$w_{4}$}--(O)node[below]{$w_{5}$}--cycle;
\draw (O)--(v2);
\draw (O)--(v3);
\draw (E1)node[above]{$w_{1}$}--(E2)node[right]{$w_{8}$}--(E3)node[below]{$w_{7}$}--(v1)--cycle;
\draw (E1)--(E3);
\node[rectangle, inner sep = 2, fill, draw] () at (O) {};
\node[regular polygon, inner sep = 2, fill = blue, draw = blue] () at (v3) {};
\node[regular polygon, inner sep = 2, fill = blue, draw = blue] () at (v1) {};
\node[rectangle, inner sep = 2, fill, draw] () at (v2) {};
\node[rectangle, inner sep = 2, fill, draw] () at (v4) {};
\node[rectangle, inner sep = 2, fill, draw] () at (E1) {};
\node[rectangle, inner sep = 2, fill, draw] () at (E2) {};
\node[rectangle, inner sep = 2, fill, draw] () at (E3) {};
\end{tikzpicture}}
\subcaptionbox{\label{fig:subfig:RC-3-c}$w_{4}w_{6} \notin E(G)$}[0.45\linewidth]{
\begin{tikzpicture}[line width = 1pt]
\def\s{1.3}
\coordinate (O) at (0, 0);
\coordinate (v1) at (0:\s);
\coordinate (v2) at (60:\s);
\coordinate (v3) at (120:\s);
\coordinate (v4) at (180:\s);
\coordinate (E1) at ($(45:\s) + (v1)$);
\coordinate (E2) at ($(0:0.707*\s) + (v1)$);
\coordinate (E3) at ($(-45:\s) + (v1)$);
\draw (v1)node[below]{$w_{5}$}--(v2)node[above]{$w_{2}$}--(v3)node[above]{$w_{1}$}--(v4)node[below]{$w_{8}$}--(O)node[below]{$w_{7}$}--cycle;
\draw (O)--(v2);
\draw (O)--(v3);
\draw (E1)node[above]{$w_{4}$}--(E2)node[right]{$w_{3}$}--(E3)node[below]{$w_{6}$}--(v1)--cycle;
\draw (v1)--(E2);
\node[rectangle, inner sep = 2, fill, draw] () at (O) {};
\node[regular polygon, inner sep = 2, fill = blue, draw = blue] () at (v2) {};
\node[regular polygon, inner sep = 2, fill = blue, draw = blue] () at (v1) {};
\node[rectangle, inner sep = 2, fill, draw] () at (v3) {};
\node[rectangle, inner sep = 2, fill, draw] () at (v4) {};
\node[rectangle, inner sep = 2, fill, draw] () at (E1) {};
\node[rectangle, inner sep = 2, fill, draw] () at (E2) {};
\node[rectangle, inner sep = 2, fill, draw] () at (E3) {};
\end{tikzpicture}}
\subcaptionbox{\label{fig:subfig:RC-3-d}$w_{4}w_{6} \notin E(G)$}[0.45\linewidth]{
\begin{tikzpicture}[line width = 1pt]
\def\s{1.3}
\coordinate (O) at (0, 0);
\coordinate (v1) at (0:\s);
\coordinate (v2) at (60:\s);
\coordinate (v3) at (120:\s);
\coordinate (v4) at (180:\s);
\coordinate (E1) at ($(45:\s) + (v1)$);
\coordinate (E2) at ($(0:1.414*\s) + (v1)$);
\coordinate (E3) at ($(0:0.707*\s) + (v1)$);
\draw (v1)node[below]{$w_{4}$}--(v2)node[above]{$w_{2}$}--(v3)node[above]{$w_{1}$}--(v4)node[below]{$w_{8}$}--(O)node[below]{$w_{7}$}--cycle;
\draw (O)--(v2);
\draw (O)--(v3);
\draw (E1)node[above]{$w_{3}$}--(E2)node[right]{$w_{6}$}--(E3)node[below]{$w_{5}$}--(v1)--cycle;
\draw (E1)--(E3);
\node[rectangle, inner sep = 2, fill, draw] () at (O) {};
\node[regular polygon, inner sep = 2, fill = blue, draw = blue] () at (v2) {};
\node[regular polygon, inner sep = 2, fill = blue, draw = blue] () at (v1) {};
\node[rectangle, inner sep = 2, fill, draw] () at (v3) {};
\node[rectangle, inner sep = 2, fill, draw] () at (v4) {};
\node[rectangle, inner sep = 2, fill, draw] () at (E1) {};
\node[rectangle, inner sep = 2, fill, draw] () at (E2) {};
\node[rectangle, inner sep = 2, fill, draw] () at (E3) {};
\end{tikzpicture}}
\caption{Note that $|N(w_{8}) \cap \{w_{1}, \dots, w_{7}\}| = 2$.}
\label{RC-3}
\end{figure}

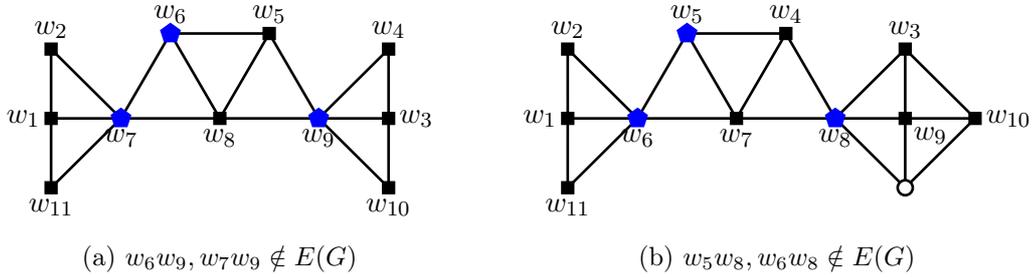
\begin{figure}
\centering
\subcaptionbox{\label{fig:subfig:RC-6-a}$w_{6}w_{9}, w_{7}w_{9} \notin E(G)$}[0.45\linewidth]{
\begin{tikzpicture}[line width = 1pt]
\def\s{1.3}
\coordinate (O) at (0, 0);
\coordinate (v1) at (0:\s);
\coordinate (v2) at (60:\s);
\coordinate (v3) at (120:\s);
\coordinate (v4) at (180:\s);
\coordinate (E1) at ($(45:\s) + (v1)$);
\coordinate (E2) at ($(0:0.707*\s) + (v1)$);
\coordinate (E3) at ($(-45:\s) + (v1)$);
\coordinate (W1) at ($(135:\s) + (v4)$);
\coordinate (W2) at ($(180:0.707*\s) + (v4)$);
\coordinate (W3) at ($(-135:\s) + (v4)$);
\draw (v1)node[below]{$w_{9}$}--(v2)node[above]{$w_{5}$}--(v3)node[above]{$w_{6}$}--(v4)node[below]{$w_{7}$}--(O)node[below]{$w_{8}$}--cycle;
\draw (O)--(v2);
\draw (O)--(v3);
\draw (E1)node[above]{$w_{4}$}--(E2)node[right]{$w_{3}$}--(E3)node[below]{$w_{10}$}--(v1)--cycle;
\draw (v1)--(E2);
\draw (W1)node[above]{$w_{2}$}--(W2)node[left]{$w_{1}$}--(W3)node[below]{$w_{11}$}--(v4)--cycle;
\draw (W2)--(v4);
\node[rectangle, inner sep = 2, fill, draw] () at (O) {};
\node[regular polygon, inner sep = 2, fill = blue, draw = blue] () at (v3) {};
\node[regular polygon, inner sep = 2, fill = blue, draw = blue] () at (v1) {};
\node[rectangle, inner sep = 2, fill, draw] () at (v2) {};
\node[regular polygon, inner sep = 2, fill = blue, draw = blue] () at (v4) {};
\node[rectangle, inner sep = 2, fill, draw] () at (E1) {};
\node[rectangle, inner sep = 2, fill, draw] () at (E2) {};
\node[rectangle, inner sep = 2, fill, draw] () at (E3) {};
\node[rectangle, inner sep = 2, fill, draw] () at (W1) {};
\node[rectangle, inner sep = 2, fill, draw] () at (W2) {};
\node[rectangle, inner sep = 2, fill, draw] () at (W3) {};
\end{tikzpicture}}
\subcaptionbox{\label{fig:subfig:RC-6-b}$w_{5}w_{8}, w_{6}w_{8} \notin E(G)$}[0.45\linewidth]{
\begin{tikzpicture}[line width = 1pt]
\def\s{1.3}
\coordinate (O) at (0, 0);
\coordinate (v1) at (0:\s);
\coordinate (v2) at (60:\s);
\coordinate (v3) at (120:\s);
\coordinate (v4) at (180:\s);
\coordinate (E1) at ($(45:\s) + (v1)$);
\coordinate (E2) at ($(0:1.414*\s) + (v1)$);
\coordinate (E3) at ($(0:0.707*\s) + (v1)$);
\coordinate (E4) at ($(-45:\s) + (v1)$);
\coordinate (W1) at ($(135:\s) + (v4)$);
\coordinate (W2) at ($(180:0.707*\s) + (v4)$);
\coordinate (W3) at ($(-135:\s) + (v4)$);
\draw (v1)node[below]{$w_{8}$}--(v2)node[above]{$w_{4}$}--(v3)node[above]{$w_{5}$}--(v4)node[below]{$w_{6}$}--(O)node[below]{$w_{7}$}--cycle;
\draw (O)--(v2);
\draw (O)--(v3);
\draw (E1)node[above]{$w_{3}$}--(E2)node[right]{$w_{10}$}--(E3)node[below right=-1pt]{$w_{9}$}--(v1)--cycle;
\draw (E1)--(E4);
\draw (E2)--(E4)--(v1);
\draw (W1)node[above]{$w_{2}$}--(W2)node[left]{$w_{1}$}--(W3)node[below]{$w_{11}$}--(v4)--cycle;
\draw (W2)--(v4);
\node[rectangle, inner sep = 2, fill, draw] () at (O) {};
\node[regular polygon, inner sep = 2, fill = blue, draw = blue] () at (v3) {};
\node[regular polygon, inner sep = 2, fill = blue, draw = blue] () at (v1) {};
\node[rectangle, inner sep = 2, fill, draw] () at (v2) {};
\node[regular polygon, inner sep = 2, fill = blue, draw = blue] () at (v4) {};
\node[rectangle, inner sep = 2, fill, draw] () at (E1) {};
\node[rectangle, inner sep = 2, fill, draw] () at (E2) {};
\node[rectangle, inner sep = 2, fill, draw] () at (E3) {};
\node[circle, inner sep =2, fill =white, draw] () at (E4) {};
\node[rectangle, inner sep = 2, fill, draw] () at (W1) {};
\node[rectangle, inner sep = 2, fill, draw] () at (W2) {};
\node[rectangle, inner sep = 2, fill, draw] () at (W3) {};
\end{tikzpicture}}
\caption{Note that $|N(w_{11}) \cap \{w_{1}, \dots, w_{10}\}| = 2$ and $|N(w_{10}) \cap \{w_{3}, \dots, w_{9}\}| = 2$.}
\label{RC-6}
\end{figure}

\begin{figure}
\centering
\subcaptionbox{\label{fig:subfig:RC-7-a}$|N(w_{10}) \cap \{w_{3}, \dots, w_{9}\}| = 2$}[0.45\linewidth]{
\begin{tikzpicture}[line width = 1pt]
\def\s{1.3}
\coordinate (O) at (0, 0);
\coordinate (v1) at (0:\s);
\coordinate (v2) at (60:\s);
\coordinate (v3) at (120:\s);
\coordinate (v4) at (180:\s);
\coordinate (E1) at ($(45:\s) + (v1)$);
\coordinate (E2) at ($(0:0.707*\s) + (v1)$);
\coordinate (E3) at ($(-45:\s) + (v1)$);
\coordinate (W1) at ($(135:\s) + (v4)$);
\coordinate (W2) at ($(180:1.414*\s) + (v4)$);
\coordinate (W3) at ($(180:0.707*\s) + (v4)$);
\coordinate (W4) at ($(-135:\s) + (v4)$);
\draw (v1)node[below]{$w_{10}$}--(v2)node[above]{$w_{3}$}--(v3)node[above]{$w_{4}$}--(v4)node[below]{$w_{6}$}--(O)node[below]{$w_{9}$}--cycle;
\draw (O)--(v2);
\draw (O)--(v3);
\draw (E1)node[above]{$w_{2}$}--(E2)node[right]{$w_{1}$}--(E3)node[right]{$w_{11}$}--(v1)--cycle;
\draw (v1)--(E2);
\draw (W1)node[above]{$w_{5}$}--(W2)node[left]{$w_{8}$}--(W3)node[below left=-1pt]{$w_{7}$}--(v4)--cycle;
\draw (W1)--(W4);
\draw (W2)--(W4)--(v4);
\node[rectangle, inner sep = 2, fill, draw] () at (O) {};
\node[regular polygon, inner sep = 2, fill = blue, draw = blue] () at (v3) {};
\node[regular polygon, inner sep = 2, fill = blue, draw = blue] () at (v1) {};
\node[rectangle, inner sep = 2, fill, draw] () at (v2) {};
\node[regular polygon, inner sep = 2, fill = blue, draw = blue] () at (v4) {};
\node[rectangle, inner sep = 2, fill, draw] () at (E1) {};
\node[rectangle, inner sep = 2, fill, draw] () at (E2) {};
\node[rectangle, inner sep = 2, fill, draw] () at (E3) {};
\node[rectangle, inner sep = 2, fill, draw] () at (W1) {};
\node[rectangle, inner sep = 2, fill, draw] () at (W2) {};
\node[rectangle, inner sep = 2, fill, draw] () at (W3) {};
\node[circle, inner sep =2, fill=white, draw] () at (W4) {};
\end{tikzpicture}}
\subcaptionbox{\label{fig:subfig:RC-7-b}$|N(w_{9}) \cap \{w_{2}, \dots, w_{8}\}| = 2$}[0.45\linewidth]{
\begin{tikzpicture}[line width = 1pt]
\def\s{1.3}
\coordinate (O) at (0, 0);
\coordinate (v1) at (0:\s);
\coordinate (v2) at (60:\s);
\coordinate (v3) at (120:\s);
\coordinate (v4) at (180:\s);
\coordinate (E1) at ($(45:\s) + (v1)$);
\coordinate (E2) at ($(0:1.414*\s) + (v1)$);
\coordinate (E3) at ($(0:0.707*\s) + (v1)$);
\coordinate (E4) at ($(-45:\s) + (v1)$);
\coordinate (W1) at ($(135:\s) + (v4)$);
\coordinate (W2) at ($(180:1.414*\s) + (v4)$);
\coordinate (W3) at ($(180:0.707*\s) + (v4)$);
\coordinate (W4) at ($(-135:\s) + (v4)$);
\draw (v1)node[below]{$w_{7}$}--(v2)node[above]{$w_{3}$}--(v3)node[above]{$w_{4}$}--(v4)node[below]{$w_{5}$}--(O)node[below]{$w_{6}$}--cycle;
\draw (O)--(v2);
\draw (O)--(v3);
\draw (E1)node[above]{$w_{2}$}--(E2)node[right]{$w_{9}$}--(E3)node[below right=-1pt]{$w_{8}$}--(v1)--cycle;
\draw (E1)--(E4);
\draw (E2)--(E4)--(v1);
\draw (W1)node[above]{$w_{1}$}--(W2)node[left]{$w_{11}$}--(W3)node[below left =-1pt]{$w_{10}$}--(v4)--cycle;
\draw (W1)--(W4);
\draw (W2)--(W4)--(v4);
\node[rectangle, inner sep = 2, fill, draw] () at (O) {};
\node[regular polygon, inner sep = 2, fill = blue, draw = blue] () at (v3) {};
\node[regular polygon, inner sep = 2, fill = blue, draw = blue] () at (v1) {};
\node[rectangle, inner sep = 2, fill, draw] () at (v2) {};
\node[regular polygon, inner sep = 2, fill = blue, draw = blue] () at (v4) {};
\node[rectangle, inner sep = 2, fill, draw] () at (E1) {};
\node[rectangle, inner sep = 2, fill, draw] () at (E2) {};
\node[rectangle, inner sep = 2, fill, draw] () at (E3) {};
\node[circle, inner sep =2, fill=white, draw] () at (E4) {};
\node[rectangle, inner sep = 2, fill, draw] () at (W1) {};
\node[rectangle, inner sep = 2, fill, draw] () at (W2) {};
\node[rectangle, inner sep = 2, fill, draw] () at (W3) {};
\node[circle, inner sep =2, fill=white, draw] () at (W4) {};
\end{tikzpicture}}
\caption{Note that $|N(w_{11}) \cap \{w_{1}, \dots, w_{10}\}| = 2$.}
\label{RC-7}
\end{figure}
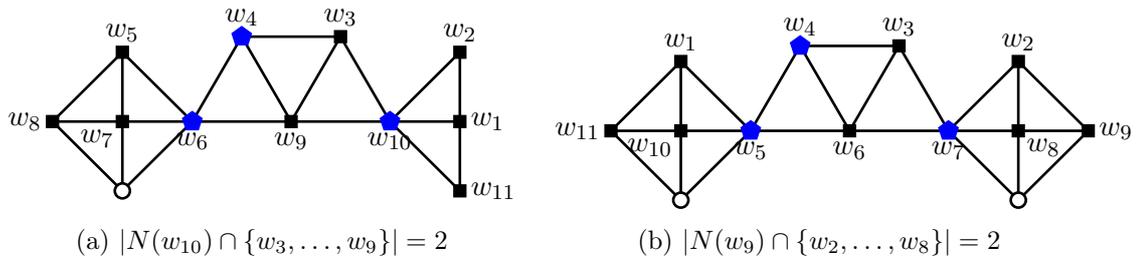

Firstly, we give each element $x \in V(G) \cup F(G) \setminus \{D\}$ an initial charge $\mu(x) = d(x) - 4$, and $\mu(D) = d(D) + 4$. By Euler's formula, the sum of the initial charges is zero, \ie
\begin{equation}
(d(D) + 4) + \sum_{v\, \in\, V(G)}(d(v) - 4) + \sum_{f\, \in\, F(G)\setminus D}(d(f) - 4) = 0.
\end{equation}

Next, we design some discharging rules to redistribute the charges, preserving the sum, such that each vertex and each $4^{+}$-face have nonnegative final charges, and each cluster has nonnegative final charge, while the outer face $D$ has positive final charge, thus the sum of all the final charges is positive, which leads to a contradiction. Let $\mu'$ be the final charge function. For a cluster $\mathcal{C}$, we define the final charge of $\mathcal{C}$ as 
\[
\mu'(\mathcal{C}) = \sum_{f\, \in\, \mathcal{C}} \mu'(f) = \sigma(\mathcal{C}) + \sum_{f\, \in\, \mathcal{C}} \mu(f), 
\]
where $\sigma(\mathcal{C})$ is the sum of the charges obtained by all the members in $\mathcal{C}$. 

An edge is a {\em middle edge} if it is incident with a vertex on the outer cycle but it is not incident with the outer face. Let $v$ be an internal vertex, $t(v)$ be the number of incident $3$-faces. Let $\mu^{*}$ be the charge function after applying the rules from \ref{R1} to \ref{R9}. An internal cluster $\mathcal{C}_{4}$ is {\em special} if it is incident with at least three $5^{+}$-vertices, while an internal cluster $\mathcal{C}_{3}$ is {\em special} if $\mu^{*}(\mathcal{C}_{3}) < 0$. A vertex is called a {\em special vertex} if it is a $5$-vertex which is incident with a special $\mathcal{C}_{3}$ and a special $\mathcal{C}_{4}$. Let $\tau(v \rightarrow f)$ denote the amount of charges transferred from the vertex $v$ to the face $f$. 

\medskip
\noindent\textbf{Discharging rules}: 
\begin{enumerate}[label = \textbf{R\arabic*}, ref = R\arabic*]
\item\label{R1} Let $v$ be an internal $4$-vertex and $f_{1}, f_{2}, f_{3}, f_{4}$ be four consecutive faces incident with $v$. If $d(f_{1}), d(f_{4}) \neq 3$ and $d(f_{2}) = d(f_{3}) = 3$, then $v$ sends $\frac{1}{6}$ to each of $f_{2}$ and $f_{3}$. 
\item\label{R2} Let $v$ be an internal $5$-vertex, and $f$ be an incident $3$-face. Then 
\begin{align*}
\tau(v \rightarrow f) = 
&\begin{cases}
\frac{1}{2}, & \text{if $f$ is in an internal cluster $\mathcal{C}_{4}$;}\\[0.1cm]
0, & \text{if $v$ is in an internal cluster $\mathcal{C}_{4}$ but $f$ is not in the cluster $\mathcal{C}_{4}$;}\\[0.1cm]
\frac{1}{3}, & \text{otherwise.}
\end{cases}
\end{align*}
\item\label{R3} Every internal $6^{+}$-vertex sends $\frac{1}{2}$ to each incident internal $3$-face. 
\item\label{R4} If $f$ is an internal $3$-face and $g$ is an adjacent $4$-face, then $g$ sends $\frac{2}{5}$ to $f$. 
\item\label{R5} If $f$ is a bounded $3$-face and $g$ is an adjacent $m$-face, where $m \geq 6$, then $g$ sends $\frac{m-4}{m}$ to $f$.
\item\label{R6} If $g$ is a $4$-face and $h$ is an adjacent $m$-face, where $m \geq 5$, then $h$ sends $\frac{m-4}{m}$ to $g$.
\item\label{R7} Let $v$ be an internal $4$-vertex. If an edge $uv$ is incident with two $6^{+}$-faces $f$ and $g$, then $f$ sends $\frac{d(f) - 4}{2d(f)}$ to $v$ and $g$ sends $\frac{d(g) - 4}{2d(g)}$ to $v$. 
\item\label{R8} Every vertex on the outer cycle sends its initial charge $\mu(v)$ to the outer face $D$.
\item\label{R9} The outer face $D$ sends $1$ to each middle edge, and every middle edge sends $\frac{1}{2}$ to each incident face. 
\item\label{R10} Let $v$ be a $5$-vertex incident with a special $\mathcal{C}_{4}$. If $v$ is incident with a special $\mathcal{C}_{3}$ with $\mu^{*}(\mathcal{C}_{3}) < - \frac{1}{3}$, then the $\mathcal{C}_{4}$ sends $\frac{1}{3}$ to the $\mathcal{C}_{3}$; if $v$ is incident with a special $\mathcal{C}_{3}$ with $\mu^{*}(\mathcal{C}_{3}) \geq -\frac{1}{3}$, then the $\mathcal{C}_{4}$ sends $-\mu^{*}(\mathcal{C}_{3})$ to the $\mathcal{C}_{3}$. 
\end{enumerate}

\begin{remark}\label{Remark}
Note that each middle edge receives $1$ from $D$, and immediately sends $\frac{1}{2}$ to each incident face. Thus, each middle edge plays a role of agency in the discharging procedure. If $f$ is a face having at least one common vertex with $D$, then it is incident with at least two middle edges, and it receives at least $1$ from $D$. 
\end{remark}

\textbf{Claim 1.} Every vertex in $G$ has nonnegative final charge. 

It is observed that each vertex on the outer cycle has final charge $\mu'(v) = \mu(v) - \mu(v) = 0$ by \ref{R8}. By \cref{MINDEG}, every internal vertex is a $4^{+}$-vertex. Let $v$ be an internal $4$-vertex. If $v$ is incident with at most one $3$-face, then $\mu'(v) = \mu(v) = 0$. If $v$ is incident with exactly two $3$-faces and these two $3$-faces are nonadjacent, then $\mu'(v) = \mu(v) = 0$. If $v$ is incident with at least three $3$-faces, then $\mu'(v) = \mu(v) = 0$. Assume $v$ is incident with exactly two $3$-faces and these two $3$-faces are adjacent. It is observed that the other two faces incident with $v$ are $6^{+}$-faces. By \ref{R7}, $v$ receives at least $\frac{1}{6}$ from each of the incident $6^{+}$-face. By \ref{R1}, $v$ sends $\frac{1}{6}$ to each of incident $3$-face. Thus, $\mu'(v) \geq \mu(v) + 2 \times \frac{1}{6} - 2 \times \frac{1}{6} = 0$. 

If $v$ is an internal $6^{+}$-vertex, then $v$ is incident with at least two $4^{+}$-faces, and then $\mu'(v) \geq \mu(v) - (d(v) - 2) \times \frac{1}{2} = \frac{d(v) - 6}{2} \geq 0$ by \ref{R3}. 

Let $v$ be an internal $5$-vertex incident with five faces $f_{1}, f_{2}, \dots, f_{5}$. By \cref{ST}\ref{N3FACE}, $v$ is incident with at most three $3$-faces. If $v$ is incident with at most two $3$-faces, then $\mu'(v) \geq \mu(v) - 2 \times \frac{1}{2} = 0$ by \ref{R2}. If $v$ is incident with exactly three $3$-faces and it is incident with an internal cluster $\mathcal{C}_{4}$, then $\mu'(v) = \mu(v) - 2 \times \frac{1}{2} = 0$ by \ref{R2}. Otherwise, $v$ sends $\frac{1}{3}$ to each incident $3$-face, then $\mu'(v) = \mu(v) - 3 \times \frac{1}{3} = 0$ by \ref{R2}. 

Therefore, every vertex in $G$ has nonnegative final charge. 

\textbf{Claim 2.} Every face has nonnegative final charge. 

If $f$ is a $4$-face with $V(f) \cap V(D) \neq \emptyset$, then $\mu'(f) \geq \mu(f) + 1 - \frac{2}{5} > 0$ by \cref{Remark}, \ref{R4} and \cref{ST}\ref{TFT}. If $f$ is a $4$-face with $V(f) \cap V(D) = \emptyset$, then $f$ is incident with at most one $3$-face and at least three $5^{+}$-faces, and then $\mu'(f) \geq \mu(f) + 3 \times \frac{1}{5} - \frac{2}{5} > 0$ by \ref{R4} and \ref{R6}. 

If $f$ is a $5$-face, then $f$ is not adjacent to any internal $3$-face, and then $\mu'(f) \geq \mu(f) - 5 \times \frac{1}{5} = 0$ by \ref{R6}. If $f$ is an $m$-face, where $m \geq 6$, then $f$ sends out at most $\frac{m-4}{m}$ via each incident edge to adjacent $4^{-}$-faces or incident $4$-vertices by \ref{R5}, \ref{R6} and \ref{R7}, and then $\mu'(f) \geq \mu(f) - m \times \frac{m-4}{m} = 0$. 

$\bullet$ \textbf{$\mathcal{C}$ is a cluster $\mathcal{C}_{1}$}. If $V(\mathcal{C}) \cap V(D) \neq \emptyset$, then $\mu'(\mathcal{C}) \geq \mu(\mathcal{C}) + 1 = 0$ by \cref{Remark}. Suppose that $\mathcal{C}$ is an internal cluster. Let $t$ be the number of adjacent $4$-faces. By \cref{ST}\ref{3SIM5}, $\mathcal{C}$ is not adjacent to any $5$-face. Then $\mu'(\mathcal{C}) \geq \mu(\mathcal{C}) + t \times \frac{2}{5} + (3 - t) \times \frac{1}{3} = \frac{1}{15}t \geq 0$ by \ref{R4} and \ref{R5}. 

$\bullet$ \textbf{$\mathcal{C}$ is a cluster $\mathcal{C}_{2}$}. By \cref{ST}\ref{C2D}, each face adjacent to $\mathcal{C}_{2}$ is a $6^{+}$-face or $D$. If $u_{1}$ or $u_{3}$ is incident with $D$, then each face in $\mathcal{C}$ receives $1$ from the outer face $D$ by \cref{Remark}, and then $\mu'(\mathcal{C}) \geq \mu(\mathcal{C}) + 2 \times 1 = 0$. If $u_{1}$ and $u_{3}$ are internal vertices, then $\mathcal{C}$ receives at least $\frac{1}{3}$ from each adjacent face by \ref{R5}, and each face in $\mathcal{C}$ receives at least $\frac{1}{6}$ from each of $u_{1}$ and $u_{3}$ by \ref{R1}, \ref{R2} and \ref{R3}, thus $\mu'(\mathcal{C}) \geq \mu(\mathcal{C}) + 4 \times \frac{1}{3} + 4 \times \frac{1}{6} = 0$. 

$\bullet$ \textbf{$\mathcal{C}$ is a cluster $\mathcal{C}_{3}$}. If $u_{1} \in \{a, b, c\}$, then all the faces in $\mathcal{C}$ have common vertices with $D$, and then $\mu'(\mathcal{C}) \geq \mu(\mathcal{C}) + 3 \times 1 = 0$ by \cref{Remark}. Next, assume that $u_{1}$ is an internal vertex. By \cref{ST}\ref{C3D}, each face adjacent to $\mathcal{C}_{3}$ is a $6^{+}$-face, and $|V(\mathcal{C}) \cap V(D)| \leq 1$. In particular, at least one of $u_{3}$ and $u_{4}$ is an internal vertex. By \ref{R1}, \ref{R2} and \ref{R3}, each internal vertex in $\{u_{3}, u_{4}\}$ sends at least $\frac{1}{6}$ to each incident face in $\mathcal{C}$. By \ref{R5}, $\mathcal{C}$ receives at least $\frac{1}{3}$ from each adjacent face. If $u_{1}$ is a $5^{+}$-vertex, then $u_{1}$ sends at least $\frac{1}{3}$ to each face in $\mathcal{C}$ by \ref{R2} and \ref{R3}, thus $\mu'(\mathcal{C}) \geq \mu(\mathcal{C}) + 5 \times \frac{1}{3} + 3 \times \frac{1}{3} + 2 \times \frac{1}{6} = 0$. If $u_{1}$ is an internal $4$-vertex and $|V(\mathcal{C}) \cap V(D)| = 1$, then $\mathcal{C}$ receives at least $1$ from $D$ by \cref{Remark}, and then $\mu'(\mathcal{C}) \geq \mu(\mathcal{C}) + 5 \times \frac{1}{3} + 1 + 2 \times \frac{1}{6} = 0$. 

So we may assume that $\mathcal{C}$ is an internal cluster $\mathcal{C}_{3}$ with $d(u_{1}) = 4$. By symmetry, it suffices to consider the following four cases according to the degrees of $u_{3}$ and $u_{4}$. 

(a) If $d(u_{3}) \geq 5$ and $d(u_{4}) \geq 5$, then each of $u_{3}$ and $u_{4}$ sends at least $\frac{1}{3}$ to each incident face in $\mathcal{C}$, and $\mu'(\mathcal{C}) \geq \mu(\mathcal{C}) + 5 \times \frac{1}{3} + 4 \times \frac{1}{3} = 0$.

(b) Assume $d(u_{3}) = d(u_{4}) = 4$. By \cref{K4-}, $d(u_{2}) \geq 5$ and $d(u_{5}) \geq 5$. Consider the internal vertex $u_{2}$. By \ref{R2} and \ref{R3}, $\tau(u_{2} \rightarrow \mathcal{C}) < \frac{1}{3}$ only if $d(u_{2}) = 5$ and $u_{2}$ is incident with an internal cluster $\mathcal{C}_{4}$. Since the configurations in \cref{RC-2} are forbidden, the internal cluster $\mathcal{C}_{4}$ at $u_{2}$ is a special cluster, thus it sends $\frac{1}{3}$ to the special cluster $\mathcal{C}$ by \ref{R10}. Thus, either $\mathcal{C}$ receives at least $\frac{1}{3}$ from $u_{2}$ or receives $\frac{1}{3}$ from a special cluster $\mathcal{C}_{4}$ incident with $u_{2}$. This is also true for $u_{5}$. Hence, $\mu'(\mathcal{C}) \geq \mu(\mathcal{C}) + 5 \times \frac{1}{3} + 4 \times \frac{1}{6} + 2 \times \frac{1}{3} = 0$. 

(c) Assume $d(u_{3}) \geq 6$ and $d(u_{4}) = 4$. By \ref{R3}, $u_{3}$ sends $\frac{1}{2}$ to each incident face in $\mathcal{C}$. It follows that $\mu'(\mathcal{C}) \geq \mu(\mathcal{C}) + 5 \times \frac{1}{3} + 2 \times \frac{1}{6} + 2 \times \frac{1}{2} = 0$. 

(d) Assume $d(u_{3}) = 5$ and $d(u_{4}) = 4$. By \ref{R2}, $u_{3}$ sends $\frac{1}{3}$ to each incident face in $\mathcal{C}$. It follows that $\mu^{*}(\mathcal{C}) \geq \mu(\mathcal{C}) + 5 \times \frac{1}{3} + 2 \times \frac{1}{6} + 2 \times \frac{1}{3} = - \frac{1}{3}$. Thus, $\mathcal{C}$ needs at most $\frac{1}{3}$ from others. If $u_{2}$ and $u_{5}$ are adjacent, then $u_{1}u_{2}u_{5}u_{1}$ bounds a $3$-face by \cref{NS}, but this contradicts \cref{ST}\ref{C3D}. It follows that $u_{2}$ and $u_{5}$ are nonadjacent. By \cref{ST}\ref{NK4}, $u_{3}$ and $u_{5}$ are also nonadjacent. Since the configuration in \cref{RC-1} is reducible, we have that $d(u_{2}) \geq 5$ or $d(u_{5}) \geq 5$. If $d(u_{2}) \geq 6$ or $d(u_{5}) \geq 6$, then $\mathcal{C}$ receives $\frac{1}{2}$ from incident $6^{+}$-vertex, we are done. So we may assume that $d(u_{2}) \leq 5$ and $d(u_{5}) \leq 5$. There are three subcases to consider. 

(d1) Assume $d(u_{2}) = 4$ and $d(u_{5}) = 5$. By \ref{R2}, $\tau(u_{5} \rightarrow \mathcal{C}) < \frac{1}{3}$ only if $u_{5}$ is incident with an internal cluster $\mathcal{C}_{4}$. Since the configurations in \cref{fig:subfig:RC-3-a} and \cref{fig:subfig:RC-3-b} are reducible, the cluster $\mathcal{C}_{4}$ at $u_{5}$ is special, thus it sends $-\mu^{*}(\mathcal{C})$ to the special cluster $\mathcal{C}$ by \ref{R10}. 

(d2) Assume $d(u_{2}) = 5$ and $d(u_{5}) = 4$. Similar to the above case (d1), either $\mathcal{C}$ receives $\frac{1}{3}$ from $u_{2}$ or receives $-\mu^{*}(\mathcal{C})$ from a special cluster $\mathcal{C}_{4}$ at $u_{2}$, we are done (here the configurations in \cref{fig:subfig:RC-3-c} and \cref{fig:subfig:RC-3-d} are reducible). 

(d3) Assume $d(u_{2}) = d(u_{5}) = 5$. Assume that $\tau(u_{2} \rightarrow \mathcal{C}) = \tau(u_{5} \rightarrow \mathcal{C}) = 0$, \ie each of $u_{2}$ and $u_{5}$ is incident with an internal cluster $\mathcal{C}_{4}$. Since the configurations in \cref{RC-6} and \cref{RC-7} are reducible, either the cluster $\mathcal{C}_{4}$ at $u_{2}$ or $u_{5}$ is a special cluster. By \ref{R10}, $\mathcal{C}$ receives $-\mu^{*}(\mathcal{C})$ from incident special cluster $\mathcal{C}_{4}$. So we may assume that $\tau(u_{2} \rightarrow \mathcal{C}) + \tau(u_{5} \rightarrow \mathcal{C}) \geq \frac{1}{3}$, then $\mu'(\mathcal{C}) \geq \mu(\mathcal{C}) + 5 \times \frac{1}{3} + 2 \times \frac{1}{6} + 2 \times \frac{1}{3} + \frac{1}{3} = 0$, we are done. 

By the above discussion, if $\mathcal{C}_{3}$ is special, then $d(u_{1}) = d(u_{3}) = d(u_{4}) = 4$ or $d(u_{1}) = d(u_{3}) - 1 = d(u_{4}) = 4$. 

$\bullet$ \textbf{$\mathcal{C}$ is a cluster $\mathcal{C}_{4}$}. Note that $u_{1}$ is a $4$-vertex and it is incident with four faces in $\mathcal{C}$, so $u_{1}$ is an internal vertex. By \cref{ST}\ref{NK4} and \ref{C4D}, $|V(\mathcal{C}) \cap V(D)| \leq 1$. By \cref{ST}\ref{C4D}, $\mathcal{C}$ is adjacent to four $6^{+}$-faces. By \ref{R5}, $\mathcal{C}$ receives at least four $\frac{1}{3}$ from adjacent faces. Assume $|V(\mathcal{C}) \cap V(D)| = 1$. Then $D$ sends two $1$ to the faces in $\mathcal{C}$ by \cref{Remark}. It follows that $\mu'(\mathcal{C}) \geq \mu(\mathcal{C}) + 4 \times \frac{1}{3} + 2 \times 1 + 6 \times \frac{1}{6} > 0$. So we may assume that $\mathcal{C}$ is an internal cluster. By \cref{K4-}, at least two vertices in $\{u_{2}, u_{3}, u_{4}, u_{5}\}$ are $5^{+}$-vertices. If exactly two of $\{u_{2}, u_{3}, u_{4}, u_{5}\}$ are $4$-vertices, then $\mu'(\mathcal{C}) \geq \mu(\mathcal{C}) + 4 \times \frac{1}{3} + 4 \times \frac{1}{2} + 4 \times \frac{1}{6} = 0$. If $\mathcal{C}$ is incident with four $5^{+}$-vertices, then $\mu'(\mathcal{C}) \geq \mu(\mathcal{C}) + 4 \times \frac{1}{3} + 8 \times \frac{1}{2} - 4 \times \frac{1}{3} = 0$. Assume $\mathcal{C}$ is incident with exactly three $5^{+}$-vertices. If $\mathcal{C}$ is incident with at most two special vertices, then $\mu'(\mathcal{C}) \geq \mu(\mathcal{C}) + 4 \times \frac{1}{3} + 6 \times \frac{1}{2} + 2 \times \frac{1}{6} - 2 \times \frac{1}{3} = 0$. 

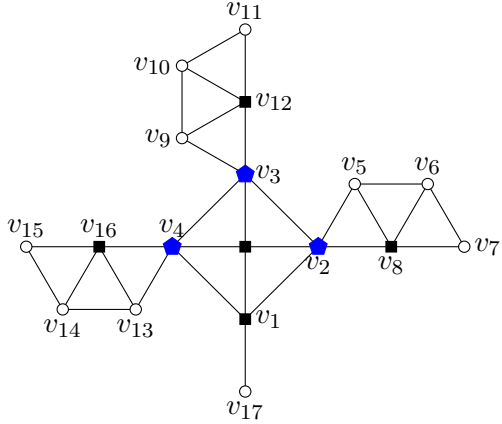
\begin{figure}
\centering
\begin{tikzpicture}[scale = 0.8]
\def\s{1.2}
\coordinate (E) at (\s, 0);
\coordinate (N) at (0, \s);
\coordinate (W) at (-\s, 0);
\coordinate (S) at (0, -\s);

\coordinate (EO) at (2*\s, 0);
\coordinate (NO) at (0, 2*\s);
\coordinate (WO) at (-2*\s, 0);
\coordinate (SO) at (0, -2*\s);

\coordinate (E1) at ($(EO)!1!-60:(E)$);
\coordinate (E2) at ($(EO)!1!-120:(E)$);
\coordinate (E3) at ($(EO)!1!-180:(E)$);
\coordinate (N1) at ($(NO)!1!-60:(N)$);
\coordinate (N2) at ($(NO)!1!-120:(N)$);
\coordinate (N3) at ($(NO)!1!-180:(N)$);
\coordinate (W1) at ($(WO)!1!-60:(W)$);
\coordinate (W2) at ($(WO)!1!-120:(W)$);
\coordinate (W3) at ($(WO)!1!-180:(W)$);

\draw (E)node[below]{$v_{2}$}--(N)node[right]{$v_{3}$}--(W)node[above]{$v_{4}$}--(S)node[right]{$v_{1}$}--cycle;
\draw (E3)--(W3);
\draw (SO)node[below]{$v_{17}$}--(N3);

\draw (E)--(E1)node[above]{$v_{5}$}--(E2)node[above]{$v_{6}$}--(E3)node[right]{$v_{7}$};
\draw (E1)--(EO)node[below]{$v_{8}$}--(E2);
\draw (N)--(N1)node[left]{$v_{9}$}--(N2)node[left]{$v_{10}$}--(N3)node[above]{$v_{11}$};
\draw (N1)--(NO)node[right]{$v_{12}$}--(N2);
\draw (W)--(W1)node[below]{$v_{13}$}--(W2)node[below]{$v_{14}$}--(W3)node[above]{$v_{15}$};
\draw (W1)--(WO)node[above]{$v_{16}$}--(W2);

\foreach \ang in {1, 2, 3}
{
\node[circle, inner sep = 1.5, fill = white, draw] () at (E\ang) {};
\node[circle, inner sep = 1.5, fill = white, draw] () at (N\ang) {};
\node[circle, inner sep = 1.5, fill = white, draw] () at (W\ang) {};
}

\node[regular polygon, inner sep = 2, fill = blue, draw = blue] () at (E) {};
\node[regular polygon, inner sep = 2, fill = blue, draw = blue] () at (N) {};
\node[regular polygon, inner sep = 2, fill = blue, draw = blue] () at (W) {};
\node[rectangle, inner sep = 2, fill, draw] () at (S) {};
\node[rectangle, inner sep = 2, fill, draw] () at (EO) {};
\node[rectangle, inner sep = 2, fill, draw] () at (NO) {};
\node[rectangle, inner sep = 2, fill, draw] () at (WO) {};
\node[circle, inner sep = 1.5, fill = white, draw] () at (SO) {};
\node[rectangle, inner sep = 2, fill, draw] () at (O) {};
\end{tikzpicture}
\caption{A special cluster $C_{4}$ is incident with three special vertices.}
\label{SPECIALC4}
\end{figure}

Next, we prove the statement: \textsl{if $\mathcal{C}$ is incident with exactly three special vertices, then $\mu'(\mathcal{C}) \geq 0$}.

Assume that $\mathcal{C}$ is incident with exactly three special vertices $v_{2}$, $v_{3}$ and $v_{4}$ in \cref{SPECIALC4}. Note that each of $v_{1}, v_{8}, v_{12}$ and $v_{16}$ is a $4$-vertex. By \cref{ST}\ref{C4D}, the special cluster $\mathcal{C}$ is adjacent to four $6^{+}$-faces. By \ref{R5}, $\mathcal{C}$ receives at least four $\frac{1}{3}$ from adjacent faces. Since $\mathcal{C}$ is a special cluster, it receives six $\frac{1}{2}$ from the three incident $5^+$-vertices, and two $\frac{1}{6}$ from the other incident $4$-vertex due to \ref{R1}, \ref{R2} and \ref{R3}. Thus, $\mu^{*}(\mathcal{C}) \geq -4 + 4 \times \frac{1}{3} + 6 \times \frac{1}{2} + 2 \times \frac{1}{6} = \frac{2}{3}$. 

By the assumption of \cref{C6}, the three special clusters $\mathcal{C}_{3}$ at $v_{2}, v_{3}$ and $v_{4}$ must have some common vertices. By \cref{ST}\ref{DISTANCE2}, \ref{C4S1} and \ref{C4S2}, $v_{17}$ has no neighbor in $\{v_{2}, v_{4}, v_{5}, v_{6}, v_{7}, v_{8}, v_{13}, v_{14}, v_{15}, v_{16}\}$, and $\{v_{6}, v_{7}\} \cap \{v_{14}, v_{15}\} \neq \emptyset$. Moreover, there is no edge between $\{v_{9}, v_{12}\}$ and $\{v_{14}, v_{15}\}$. By \cref{ST}\ref{C3D}, it is observed that $|\{v_{6}, v_{7}\} \cap \{v_{14}, v_{15}\}| = 1$. Since $v_{2}$ and $v_{4}$ are special vertices, we have that $v_{5}, v_{6}, v_{13}$ and $v_{14}$ are $5^{-}$-vertices. Then $v_{6}$ and $v_{14}$ can not be identical. By symmetry, we only need to consider two cases. 

\textbf{Case 1: $v_{15}$ and $v_{7}$ are identical.} Assume that $v_{7}$ is a $5$-vertex. Then $v_{7}$ sends $\frac{1}{3}$ to each incident $3$-face. Since the configuration in \cref{fig:subfig:RC-2-a} is reducible, we have that one vertex in $\{v_{5}, v_{6}, v_{13}, v_{14}\}$ has degree $5$. For this special cluster $\mathcal{C}_{3}$, we have $\mu^{*}(\mathcal{C}_{3}) \geq \mu(\mathcal{C}_{3}) + 5 \times \frac{1}{3} + 3 \times \frac{1}{3} + 2 \times \frac{1}{6} = 0$, but it contradicts the definition of special $\mathcal{C}_{3}$. 

Assume that $v_{7}$ is a $6^{+}$-vertex. Then $v_{7}$ sends $\frac{1}{2}$ to each incident $3$-face. The special clusters $\mathcal{C}_{3}$ at $v_{2}$ and $v_{4}$ have $\mu^{*}(\mathcal{C}_{3}) \geq -3 + 5 \times \frac{1}{3} + 4 \times \frac{1}{6} + \frac{1}{2} = - \frac{1}{6}$. Therefore, $\mathcal{C}$ sends at most $\frac{1}{6}$ via each of $v_{2}$ and $v_{4}$, and sends at most $\frac{1}{3}$ via $v_{3}$, so $\mu'(\mathcal{C}) \geq \mu^{*}(\mathcal{C}) - 2 \times \frac{1}{6} - \frac{1}{3} = 0$. 

Assume that $v_{7}$ is a $4$-vertex, \ie it has only four neighbors $v_{6}, v_{8}, v_{14}$ and $v_{16}$. By \cref{K4-}, either $v_{5}$ or $v_{6}$ is a $5$-vertex, and either $v_{13}$ or $v_{14}$ is a $5$-vertex. If the path $v_{7}v_{8}v_{2}v_{1}v_{17}$ is on the boundary of a face, then the face has degree at least $7$ because $v_{17}v_{14}, v_{17}v_{16} \notin E(G)$. If the path $v_{7}v_{8}v_{2}v_{3}$ is on the boundary of a face, then the face has degree at least $7$ because neither $v_{9}$ nor $v_{12}$ is adjacent to $\{v_{14}, v_{16}\}$. Hence, in the two situations, $v_{7}v_{8}, v_{8}v_{2}$, one of $v_{2}v_{1}$ and $v_{2}v_{3}$, one of $v_{15}v_{14}$ and $v_{15}v_{16}$ are incident with $7^{+}$-faces. Note that a $7^{+}$-face sends more $\frac{3}{7} - \frac{1}{3}$ to the special $\mathcal{C}$ than a $6$-face. Therefore, $\mu'(\mathcal{C}) \geq \frac{2}{3} - 3 \times \frac{1}{3} + 4 \times (\frac{3}{7} - \frac{1}{3}) > 0$. 

\textbf{Case 2: $v_{14}$ and $v_{7}$ are identical.} Then $v_{14}$ is a $5$-vertex. 
Similar to the above case when $v_{7}$ and $v_{15}$ are identical as a $4$-vertex, $v_{7}v_{8}, v_{8}v_{2}$, one of $v_{2}v_{1}$ and $v_{2}v_{3}$, one of $v_{14}v_{15}$ and $v_{14}v_{13}$ are incident with $7^{+}$-faces. Therefore, $\mu'(\mathcal{C}) \geq \frac{2}{3} - 3 \times \frac{1}{3} + 4 \times (\frac{3}{7} - \frac{1}{3}) > 0$. This completes the proof of the statement.

Finally, we consider the final charge of the outer face $D$. Note that each vertex $v$ on $D$ is incident with $d(v) - 2$ middle edges. 
\begin{equation*}
\mu'(D) = d(D) + 4 + \sum_{v \in V(D)} \big(d(v) - 4\big) - \sum_{v \in V(D)} \big(d(v) - 2\big) = 4 - d(D) = 1. 
\end{equation*}

\vskip 0mm \vspace{0.3cm} \noindent{\bf Acknowledgments.} The authors would like to thank the anonymous reviewers for their valuable comments and careful reading of this paper. Danjun Huang was supported by NSFC (Nos: 12171436, 12371360), and Weifan Wang was supported by NSFC (No. 12031018) and NSFSD (No. ZR2022MA060).


\begin{thebibliography}{10}

\bibitem{MR3996735}
L.~Chen, R.~Liu, G.~Yu, R.~Zhao and X.~Zhou, {DP}-4-colorability of two classes
  of planar graphs, Discrete Math. 342~(11) (2019) 2984--2993.

\bibitem{MR3758240}
Z.~Dvo\v{r}\'{a}k and L.~Postle, Correspondence coloring and its application to
  list-coloring planar graphs without cycles of lengths 4 to 8, J. Combin.
  Theory Ser. B 129 (2018) 38--54.

\bibitem{MR593902}
P.~Erd\H{o}s, A.~L. Rubin and H.~Taylor, Choosability in graphs, in:
  Proceedings of the {W}est {C}oast {C}onference on {C}ombinatorics, {G}raph
  {T}heory and {C}omputing ({H}umboldt {S}tate {U}niv., {A}rcata, {C}alif.,
  1979), Congress. Numer., XXVI, Utilitas Math., Winnipeg, Man., 1980, pp.
  125--157.

\bibitem{MR2538645}
B.~Farzad, Planar graphs without 7-cycles are 4-choosable, SIAM J. Discrete
  Math. 23~(3) (2009) 1179--1199.

\bibitem{MR4212281}
D.~Huang and J.~Qi, D{P}-coloring on planar graphs without given adjacent short
  cycles, Discrete Math. Algorithms Appl. 13~(2) (2021) 2150013.

\bibitem{MR4374026}
D.~Huang and J.~Qi, Planar graphs without chordal 6-cycles and necklaces are
  {DP}-4-colorable, Adv. Math. (China) 50~(6) (2021) 861--876.

\bibitem{MR3802151}
S.-J. Kim and K.~Ozeki, A sufficient condition for {DP}-4-colorability,
  Discrete Math. 341~(7) (2018) 1983--1986.

\bibitem{MR3969022}
S.-J. Kim and X.~Yu, Planar graphs without 4-cycles adjacent to triangles are
  {DP}-4-colorable, Graphs Combin. 35~(3) (2019) 707--718.

\bibitem{MR1842116}
P.~C.~B. Lam, W.~C. Shiu and B.~Xu, On structure of some plane graphs with
  application to choosability, J. Combin. Theory Ser. B 82~(2) (2001) 285--296.

\bibitem{MR1687318}
P.~C.~B. Lam, B.~Xu and J.~Liu, The 4-choosability of plane graphs without
  4-cycles, J. Combin. Theory Ser. B 76~(1) (1999) 117--126.

\bibitem{MR4294211}
R.~Li and T.~Wang, D{P}-4-coloring of planar graphs with some restrictions on
  cycles, Discrete Math. 344~(11) (2021) 112568.

\bibitem{MR4669976}
R.~Li and T.~Wang, Variable degeneracy on toroidal graphs, Graphs Combin.
  39~(6) (2023) 127.

\bibitem{MR4654340}
X.~Li and M.~Zhang, Every planar graph without 5-cycles adjacent to 6-cycles is
  {DP}-4-colorable, Australas. J. Combin. 87 (2023) 86--97.

\bibitem{MR3881665}
R.~Liu and X.~Li, Every planar graph without 4-cycles adjacent to two triangles
  is {DP}-4-colorable, Discrete Math. 342~(3) (2019) 623--627.

\bibitem{MR4078909}
R.~Liu, X.~Li, K.~Nakprasit, P.~Sittitrai and G.~Yu, {DP}-4-colorability of
  planar graphs without adjacent cycles of given length, Discrete Appl. Math.
  277 (2020) 245--251.

\bibitem{MR4357325}
F.~Lu, Q.~Wang and T.~Wang, Cover and variable degeneracy, Discrete Math.
  345~(4) (2022) 112765.

\bibitem{MR4114324}
K.~M. Nakprasit and K.~Nakprasit, A generalization of some results on list
  coloring and {DP}-coloring, Graphs Combin. 36~(4) (2020) 1189--1201.

\bibitem{MR4089638}
P.~Sittitrai and K.~Nakprasit, Every planar graph without pairwise adjacent 3-,
  4-, and 5-cycle is {DP}-4-colorable, Bull. Malays. Math. Sci. Soc. 43~(3)
  (2020) 2271--2285.

\bibitem{MR4345150}
P.~Sittitrai and K.~Nakprasit, An analogue of {DP}-coloring for variable
  degeneracy and its applications, Discuss. Math. Graph Theory 42~(1) (2022)
  89--99.

\bibitem{MR1290638}
C.~Thomassen, Every planar graph is $5$-choosable, J. Combin. Theory Ser. B
  62~(1) (1994) 180--181.

\bibitem{MR0498216}
V.~G. Vizing, Coloring the vertices of a graph in prescribed colors, Diskret.
  Analiz. 29 (1976) 3--10.

\bibitem{MR1235909}
M.~Voigt, List colourings of planar graphs, Discrete Math. 120~(1-3) (1993)
  215--219.

\bibitem{Wang2019+}
Q.~Wang, T.~Wang and X.~Yang, Variable degeneracy of graphs with restricted
  structures, arXiv:2112.09334,
  \url{https://doi.org/10.48550/arXiv.2112.09334}.

\bibitem{MR1889505}
W.~Wang and K.-W. Lih, Choosability and edge choosability of planar graphs
  without five cycles, Appl. Math. Lett. 15~(5) (2002) 561--565.

\bibitem{MR1935837}
W.~Wang and K.-W. Lih, Choosability and edge choosability of planar graphs
  without intersecting triangles, SIAM J. Discrete Math. 15~(4) (2002)
  538--545.

\end{thebibliography}
\end{document}